\tikzstyle{empty}=[circle,draw=black!80,thick]
\tikzstyle{emptyn}=[circle,draw=black!80,fill=white,scale=0.5] 
\tikzstyle{nero}=[circle,draw=black!80,fill=black!80,thick]
\newtheorem{thm}{Theorem}[section]
\newtheorem{corollary}[thm]{Corollary}
\newtheorem{proposition}[thm]{Proposition}
\newtheorem{lemma}[thm]{Lemma}
\theoremstyle{definition}
\newtheorem{definition}[thm]{Definition}
\newtheorem{remark}[thm]{Remark}
\newtheorem{example}[thm]{Example}
\newtheorem {Step}{Step}
\newtheorem*{thmA}{Theorem A}
\newtheorem*{thmB}{Theorem B}
\newtheorem*{thmC}{Theorem C}
\newtheorem*{CorD}{Corollary D}
\newcommand{\N}{\mathbf{N}}
\renewcommand{\epsilon}{\varepsilon}
\renewcommand{\emptyset}{\varnothing}
\DeclareMathOperator{\GL}{GL}
\newcommand{\fS}{{\mathfrak{S}}}
\newcommand{\nor}{\unlhd}
\def\norm#1#2{{\bf N}_{#1}(#2)}
\def\irr#1{{\rm Irr}(#1)}
\def\irrq#1#2{{\rm Irr}_{#2'}(#1)}
\def\syl#1#2{{\rm Syl}_#1(#2)}
\newcounter{thmlistcnt}
	{\setcounter{thmlistcnt}{0}%
	\begin{list}{\emph{(\roman{thmlistcnt})}}{%
		\usecounter{thmlistcnt}%
		\setlength{\topsep}{0pt}%
		\setlength{\leftmargin}{27pt}%
		\setlength{\itemsep}{0pt}%
		\setlength{\labelwidth}{20pt}
		\setlength{\itemindent}{0pt}}%
	}%
	{\end{list}}%
\numberwithin{equation}{section}
\newcommand{\Syl}{\operatorname{Syl}}
\newcommand{\Aut}{\operatorname{Aut}}
\newcommand{\Irr}{\operatorname{Irr}}
\newcommand{\GU}{\operatorname{GU}}
\newcommand{\FF}{\mathbb{F}}
\def\irrp#1{{\rm Irr}_{p'}(#1)}
\def\irr#1{{\rm Irr}(#1)}
\def\syl#1#2{{\rm Syl}_#1(#2)}
\def\nor{\triangleleft\,}
\def\det#1{{\rm det}(#1)}
\def\ker#1{{\rm ker}(#1)}
\def\norm#1#2{{\bf N}_{#1}(#2)}
   \def \mod#1{\, {\rm mod} \, #1 \, }
\newcommand{\Inn}{{\mathrm {Inn}}}
\newcommand{\diag}{{\mathrm {diag}}}
\newcommand{\Gal}{{\it Gal}}
\newcommand{\CC}{{\mathbb C}}
\newcommand{\CB}{{\mathbf C}}
\newcommand{\QQ}{{\mathbb Q}}
\newcommand{\ZZ}{{\mathbb Z}}
\newcommand{\NB}{{\mathbf N}}
\newcommand{\HC}{{\mathcal H}}
\newcommand{\eps}{\epsilon}
\newcommand{\al}{\alpha}
\newcommand{\gam}{\gamma}
\newcommand{\lam}{\lambda}
\newcommand{\sm}{\sigma}
\newcommand{\usm}{\underline{\sigma}}
\newcommand{\ulam}{\underline{\lambda}}
\newcommand{\vt}{\vdash_{3'}}
\newcommand{\tw}[1]{{}^#1\!}
\renewcommand{\mod}{\bmod \,}
\title[]{Irreducible characters of $3'$-degree of finite symmetric, general linear and unitary groups}
\author{Eugenio Giannelli, Joan Tent, and Pham Huu Tiep}
\address[E.~Giannelli]{Trinity Hall, University of Cambridge, Trinity Lane, CB21TJ, UK}
\email{eg513@cam.ac.uk}
\address[J.~Tent]{Departament de Matem\`atiques, Universitat de Val\`encia, 46100 Burjassot (Val\`encia), Spain}
\email{joan.tent@uv.es}
\address[P. H. Tiep]{Department of Mathematics, University of Arizona, Tucson, AZ 85721, USA}
\email{tiep@math.arizona.edu}
\thanks{The first author gratefully acknowledges financial support by the
ERC Advanced Grant 291512 and by Trinity Hall, Cambridge. 
The second author 
has been supported by 
MTM2014-53810-C2-01 of the Spanish MEyC and by the Prometeo/Generalitat Valenciana.  
The third author gratefully acknowledges the support of the NSF (grant DMS-1201374) and a 
Clay Senior Scholarship.}
\thanks{Part of the paper was written while the first and third author were visiting the Centre Interfacultaire Bernoulli, EPFL, Lausanne, Switzerland. 
It is a pleasure to thank the Clay Mathematics Institute for financial support and the EPFL for 
generous hospitality and stimulating environment.
Another part of the paper was written while the second author was visiting the University of Kaiserslautern, and it is a pleasure to thank Gunter Malle for supporting the research stay. 
Finally we thank Gabriel Navarro for several inspiring conversations on the topic.
}
\begin{document}


\begin{abstract}
Let $G$ be a finite symmetric, general linear, or general unitary group defined over a field of characteristic coprime to $3$.
We construct a canonical correspondence between irreducible characters of degree coprime to $3$ of $G$ and those of $\NB_{G}(P)$, where $P$ is a Sylow $3$-subgroup of $G$. 
Since our bijections commute with the action of the absolute Galois group over the
rationals,  we conclude that fields of values of character
correspondents  are the same. 
\end{abstract}

\maketitle
\thispagestyle{empty}

\section{Introduction}
For any finite group $G$ and any prime number $p$, let $\Irr_{p'}(G)$ denote the set of complex irreducible characters of $G$ of
degree coprime to $p$. 
The {\it McKay conjecture} (first stated in \cite{McK}) asserts that the number $|\mathrm{Irr}_{p'}(G)|$
is equal to $|\mathrm{Irr}_{p'}(\NB_G(P))|$, where $P$ is a Sylow $p$-subgroup of $G$. 
When $p=2$ this conjecture was recently proved in \cite{MalleSpath}. 
Sometimes, it is not only possible to show that 
$|\mathrm{Irr}_{p'}(G)|=|\mathrm{Irr}_{p'}(\NB_G(P))|$, but we can also establish a canonical bijection between the two sets of characters.
In general, a {\it natural} correspondence of characters between a group $G$ and a subgroup $H$ of $G$ does not always exist. 
Particularly rare are the cases where such a correspondence can be found to be compatible with restriction of characters, or equivariant under
the action of Galois or outer automorphisms. The goal of the paper is to construct canonical McKay correspondences for certain important finite groups,
namely the symmetric groups and the general linear and general unitary groups (in characteristic other than $3$), at the prime $p=3$. Note that the validity 
of the McKay conjecture for these groups was already established by Olsson \cite{Olsson}. The novelty of our results lies in the construction of {\it canonical}
McKay bijections for those groups.

A canonical McKay bijection for symmetric groups at $p=2$ was constructed in \cite[Theorem 4.3]{GKNT}, building on \cite[Theorem 3.2]{G}. It is 
natural to ask whether canonical McKay bijections can also exist for other primes. Unfortunately, for $p\geq 5$ the fields of values of the irreducible characters of $\fS_n$ and $\NB_{\fS_n}(P_n)$ are distinct. Hence no canonical bijection can possibly exist when $p \geq 5$, since we would expect it to commute with the action of the absolute Galois group over the rationals (here we denoted by $P_n$ a Sylow $p$-subgroup of $\fS_n$). 
It turns out, somewhat surprisingly, that a canonical McKay bijection does exist for $\fS_n$ at $p=3$. Note that, in general, such a bijection 
does not exist for solvable groups. For instance the fields of values of the $3'$-degree 
irreducible characters of the solvable group $G:=\GL_2(3)$ and $\NB_{G}(P)$ are distinct, for $P$ a Sylow $3$-subgroup of $G$.

\medskip
Our first main result constructs a canonical McKay bijection in the case of symmetric groups $\fS_n$ for the prime $p=3$.

\begin{thmA}
\textit{Let $n$ be any positive integer and let $P \in \Syl_3(\fS_n)$. Then there is a canonical bijection between 
the set $\Irr_{3'}(\fS_n)$ of complex irreducible characters of $3'$-degree of $\fS_n$ and that of 
$\NB_{\fS_n}(P)$.}
\end{thmA}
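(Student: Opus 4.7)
The plan is to construct the bijection by decomposing along the base-$3$ expansion of $n$ and then handling the prime-power case $n=3^k$ by induction on $k$. Write $n=\sum_{i\geq 0}a_i3^i$ with $a_i\in\{0,1,2\}$. The Sylow decomposes as $P_n\cong\prod_i (P_{3^i})^{a_i}$, so $\NB_{\fS_n}(P_n)\cong\prod_i\bigl(\NB_{\fS_{3^i}}(P_{3^i})\wr\fS_{a_i}\bigr)$. On the $\fS_n$ side, Macdonald's hook-length criterion implies that $\Irr_{3'}(\fS_n)$ is naturally in bijection with tuples of $3$-adically primitive partitions, one for each nonzero digit $a_i$. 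Thus both parametrizations decompose along the \emph{same} base-$3$ digits, and the construction of the general bijection will reduce to the prime-power case, with the $\fS_{a_i}$-wreathing handled by elementary Clifford theory (only $a_i\leq 2$ occurs).

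For $n=3^k$, I would induct on $k$, with the trivial base $k\leq 1$ (where $\fS_3=\NB_{\fS_3}(P_3)$). For the inductive step, write $P_{3^k}=P_{3^{k-1}}\wr C_3$ and analyze the corresponding wreath-product structure on the normalizer. By Clifford theory over the abelian normal $3$-subgroup $P_{3^k}$, the $3'$-degree characters of $\NB_{\fS_{3^k}}(P_{3^k})$ are parametrized by $\NB/P$-orbits of linear characters $\lambda\in\Irr(P_{3^k})$ together with $3'$-degree irreducible characters of the inertia subgroup $I(\lambda)/P_{3^k}$. These data should be matched to the iterated $3$-quotient / abacus decomposition of partitions $\mu\vdash 3^k$ with $3\nmid\chi^{\mu}(1)$, using the inductive bijection at level $k-1$ to identify each of the three components of the $3$-quotient with a linear character of $P_{3^{k-1}}$.

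Galois equivariance reduces to the assertion that every $\chi\in\Irr_{3'}(\NB_{\fS_n}(P_n))$ is rational-valued, since every character of $\fS_n$ is rational. This should be verified directly from the structure of $\NB_{\fS_{3^i}}(P_{3^i})$: its Hall $3'$-subgroup contains, at each level of the wreath, an inversion element descending from the $C_{3-1}=C_2$ acting faithfully on the base $C_3$. Consequently every $\NB/P$-orbit of linear characters of $P_n$ is closed under $\lambda\mapsto\lambda^{-1}=\bar\lambda$, and the characters of the inertia quotients take values in real cyclotomic fields with trivial Galois action over $\Q$ on the relevant part. This is exactly the feature that distinguishes $p=3$ from $p\geq 5$, as emphasized in the introduction.

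The main obstacle is making the inductive construction genuinely \emph{canonical}: one must choose, at each inductive step, a natural extension of linear characters from $P_{3^k}$ to its inertia subgroup in $\NB/P$, and these choices must cohere across the induction and commute with the $C_2$-action by inversion. Matching this data with the $3$-quotient combinatorics of partitions (so that, in particular, associate partitions are handled symmetrically and the Galois action by $\sigma:\zeta_3\mapsto\zeta_3^{-1}$ becomes trivial on both sides) is expected to be the bulk of the technical work, analogous to but strictly harder than the corresponding analysis at $p=2$ carried out in \cite{GKNT}, because each $C_3$ factor contributes two nontrivial linear characters that must be handled in a Galois-symmetric way.
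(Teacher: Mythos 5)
Your high-level architecture agrees with the paper: decompose along the base-$3$ expansion of $n$, then handle the prime-power case $n=3^k$ by induction, using $P_{3^k}=P_{3^{k-1}}\wr C_3$, and reduce Galois-equivariance to the observation that all characters of $\NB_{\fS_n}(P_n)$ of $3'$-degree are rational-valued. However, for the crucial case $n=3^k$ you take a genuinely different route from the paper. The paper interpolates via the imprimitive subgroup $\fS_{3^{k-1}}\wr\fS_3$ sitting between $\fS_{3^k}$ and $\NB_{\fS_{3^k}}(P_{3^k})$: it first shows (Theorem B, using Littlewood--Richardson combinatorics on hook partitions) that restriction from $\fS_{3^k}$ to $\fS_{3^{k-1}}\wr\fS_3$ picks out a \emph{unique} $3'$-degree constituent with multiplicity one, and then descends from $\fS_{3^{k-1}}\wr\fS_3$ to the normaliser by a Clifford-theoretic local bijection. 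The ``canonicity'' of the paper's map costs nothing: it comes for free from uniqueness of the $3'$-constituent. Your plan instead works purely from the normaliser side by Clifford theory over $P_{3^k}$, matching $\NB/P$-orbits of linear characters of $P$ against the $3$-quotient/abacus data of hook partitions; this is closer to Olsson's original \emph{counting} argument than to the paper's restriction-compatible construction.

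There are two concrete problems. First, $P_{3^k}=C_3\wr C_3\wr\cdots\wr C_3$ is \emph{not} abelian for $k\ge 2$; your appeal to ``Clifford theory over the abelian normal $3$-subgroup $P_{3^k}$'' would have to be replaced by an analysis of the linear characters of a nonabelian $P$ (which do factor through $P/[P,P]$, but the inertia groups and extension questions become more delicate, and the $\NB/P$-orbit structure is less transparent). Second, and more seriously, the heart of the theorem --- producing a \emph{choice-free} extension of each $\NB/P$-orbit representative to its inertia subgroup, coherently across the $k$-induction and compatibly with the inversion-by-$C_2$ action --- is exactly what you flag as ``the bulk of the technical work'' and leave unaddressed. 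Without a concrete mechanism analogous to the paper's canonical wreath-product extensions (Lemma \ref{wr-ext}) and the Isaacs correspondences (Proposition \ref{isa_lemma}) used in Proposition \ref{bij_wreath}, the construction is not yet a proof. Finally, the digit-$2$ pieces ($a_i=2$) are not ``elementary Clifford theory'': the passage from $\Irr_{3'}(\fS_{2\cdot 3^k})$ to $\Irr_{3'}(\fS_{3^k}\wr\fS_2)$ requires the hook-generated-partition analysis of Section 4.1 (Definition \ref{hookgen} through Theorem \ref{t:main23^k}), and a bijection there does not drop out of the two-fold wreath structure alone, since not every $3'$-character of $\fS_{2\cdot 3^k}$ is labelled by a hook.
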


Our proof relies on previous results on combinatorics of representations of symmetric groups, particularly \cite{Olsson}, and also on the following key step analyzing the case $n=3^k$ for any $k\in\mathbb{N}$. In Theorem \ref{t:2main correspondence p^k} we find a canonical bijection $\chi\mapsto\chi^*$ between $\mathrm{Irr}_{3'}(\fS_{3^k})$ and $\mathrm{Irr}_{3'}(\NB_{\fS_{3^k}}(P_{3^k}))$, that is {\it compatible with character restriction} (i.e.
$\chi^*$ is an irreducible constituent of $\chi\downarrow_{\NB_{\fS_{3^k}}(P_{3^k})}$, for every $\chi\in\mathrm{Irr}_{3'}(\fS_{3^k})$).
This is done by considering the representation theory of the intermediate subgroup $\fS_{3^{k-1}}\wr \fS_3$, lying between $\fS_{3^k}$ and $\NB_{\fS_{3^k}}(P_{3^k})$. 
In particular we can prove the following general theorem, valid for every odd prime number, that we believe is of independent interest. 

\begin{thmB}
\textit{Let $p$ be an odd prime and let $H=\fS_{p^{k-1}}\wr \fS_p\leq \fS_{p^k}$, for some natural number $k$. 
Let $\chi\in\mathrm{Irr}_{p'}(\fS_{p^k})$, then 
$\chi\downarrow_H=\chi^*+\Delta$, where 
$\chi^*\in\mathrm{Irr}_{p'}(\fS_{p^{k-1}}\wr \fS_p)$ and $\Delta$ is a sum (possibly empty) of irreducible characters of degree divisible by $p$. Moreover, the map $\chi\mapsto\chi^*$ is a bijection between 
$\mathrm{Irr}_{p'}(\fS_{p^k})$ and $\mathrm{Irr}_{p'}(\fS_{p^{k-1}}\wr \fS_p)$.}
\end{thmB}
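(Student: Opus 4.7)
The plan is to compare explicit parametrisations of $\Irr_{p'}(H)$ and $\Irr_{p'}(\fS_{p^k})$ to establish equality of cardinalities, and then to single out a unique $p'$-degree constituent of $\chi\downarrow_H$ for each $\chi\in\Irr_{p'}(\fS_{p^k})$.

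The first step is an elementary Clifford-theoretic analysis of $\Irr_{p'}(H)$. An irreducible character of $H=\fS_{p^{k-1}}\wr\fS_p$ is determined by a multiset $\{\eta_1,\ldots,\eta_p\}$ of characters of $\fS_{p^{k-1}}$ together with an irreducible character of its $\fS_p$-stabiliser $T$. The degree of the resulting character is a multiple of the index $[\fS_p:T]$, which is divisible by $p$ whenever $T<\fS_p$; hence $T=\fS_p$ and $\eta_1=\cdots=\eta_p=:\eta$. The remaining $p'$-degree conditions force $\eta\in\Irr_{p'}(\fS_{p^{k-1}})$ and the $\fS_p$-part to be a $p'$-degree irreducible, i.e.\ one of the $p$ hook characters $\chi^{(p-j,1^j)}$, $0\le j\le p-1$. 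Consequently
\[\Irr_{p'}(H)=\bigl\{\widetilde{\eta^{\otimes p}}\cdot\widetilde{\psi}:\eta\in\Irr_{p'}(\fS_{p^{k-1}}),\ \psi\text{ a hook of }\fS_p\bigr\},\]
which has cardinality $p\cdot|\Irr_{p'}(\fS_{p^{k-1}})|$.

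For the second step, I would invoke Macdonald's hook-length formula combined with the abacus / $p$-quotient calculus: $\chi^\lambda\in\Irr_{p'}(\fS_{p^k})$ if and only if $\lambda$ has empty $p$-core and $p$-quotient $(\lambda^{(0)},\ldots,\lambda^{(p-1)})$ with a unique non-empty component $\lambda^{(i)}$ of size $p^{k-1}$ whose associated character $\chi^{\lambda^{(i)}}$ is of $p'$-degree. This matches the cardinality of the previous step and suggests the natural candidate bijection $\lambda\leftrightarrow(i,\mu)\mapsto\widetilde{(\chi^\mu)^{\otimes p}}\cdot\widetilde{\psi_i}$ for a specific hook $\psi_i$ of $\fS_p$ determined by $i$.

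Existence of a $p'$-degree constituent in $\chi\downarrow_H$ is routine: reducing $\chi(1)=\sum_\phi n_\phi\phi(1)$ modulo $p$ forces at least one $\phi\in\Irr_{p'}(H)$ to appear. The heart of the argument is uniqueness. Via Frobenius reciprocity together with Macdonald's formula $\mathrm{ch}\bigl(\Ind_H^{\fS_{p^k}}(\widetilde{\eta^{\otimes p}}\widetilde\psi)\bigr)=s_\psi[s_\eta]$, this reduces to showing that the plethysm inner product $\langle s_\lambda,s_\psi[s_\eta]\rangle$ equals $1$ precisely when $(\eta,\psi)=(\chi^\mu,\psi_i)$ and vanishes for all other pairs with $\eta\in\Irr_{p'}(\fS_{p^{k-1}})$ and $\psi$ a hook of $\fS_p$. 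I would attack this either by induction on $k$, peeling off one wreath layer at a time and invoking the inductive bijection, or by exploiting the abacus description of $p$-quotients to pin down exactly which Schur functions arise in $s_\psi[s_\eta]$. The principal obstacle is precisely this plethysm analysis: unlike the $p=2$ situation handled in \cite{G,GKNT}, where $\fS_2$ supplies only the trivial and sign characters, here the $p$ distinct hooks of $\fS_p$ interact non-trivially with the $p$-quotient data of $\lambda$ and must be sorted out delicately. Once uniqueness is established, equality of the cardinalities from the first two steps upgrades the injection $\chi\mapsto\chi^*$ to a bijection.
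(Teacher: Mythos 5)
Your Steps 1–2 are correct and match the paper's setup: $\Irr_{p'}(H)$ is parametrised by pairs (hook of $p^{k-1}$, hook of $p$) via Clifford theory, $\Irr_{p'}(\fS_{p^k})$ is the set of hook characters, and the two sets have the same cardinality $p^k$. Your reduction of the problem to the plethysm inner products $\langle s_\lambda, s_\psi[s_\mu]\rangle$ is also the right framing. However, the crucial uniqueness step is not proved — you explicitly flag it yourself as ``the principal obstacle'' and give only two possible lines of attack without carrying either out. That is the entire content of Theorem B, and as it stands the proposal has a genuine gap there.

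Let me point out why this gap is nontrivial and where your candidate bijection is slightly off. First, the existence-via-degrees argument only gives at least one $p'$-constituent; it does not give multiplicity one, nor does it exclude a second $p'$-constituent. Second, your conjectural rule that $\lambda\mapsto\widetilde{(\chi^\mu)^{\otimes p}}\cdot\widetilde{\psi_i}$ with $\psi_i$ ``a specific hook determined by $i$'' is too optimistic: as the paper's Theorem~\ref{t:main correspondence p^k} shows, the argument a priori only pins $\chi^*$ down to one of the two conjugate hooks $\nu_1=(p-x,1^x)$ and $\nu_2=(x+1,1^{p-1-x})$ of $\fS_p$, and resolving which one it is requires a further argument (done for $p=3$ in Corollary~\ref{c:3precise}). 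So the rule is not determined by the $p$-quotient position $i$ alone in any elementary way. The paper's actual proof of uniqueness works quite differently from either of your sketched options: Lemma~\ref{lem:1} uses a generalised Littlewood--Richardson rule (Lemmas~\ref{lem:neq0} and~\ref{lem:dlambda}) to compute $\left\langle \chi^{h_j}\downarrow_{(\fS_{p^{k-1}})^p},(\chi^\mu)^{\otimes p}\right\rangle=\binom{p-1}{x}$ explicitly, and then Theorem~\ref{t:1} runs a reverse induction on $x\in\{0,\ldots,\frac{p-1}{2}\}$ starting from the middle hook of $\fS_p$ (the unique one with the maximal binomial degree $\binom{p-1}{(p-1)/2}$), using a degree-comparison squeeze to isolate the $p'$-constituent at each step. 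Note also that $\binom{p-1}{x}$ is generally bigger than $1$, so the $p'$-degree constituent is not obtained merely by counting multiplicities over the base group; one really needs to track how the multiplicity distributes over the characters of $\fS_p$ of different degrees. Your plethysm route would likely require comparable effort once the details are filled in, so as written the proposal is a plan rather than a proof.
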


\medskip
The second main result of the paper constructs canonical McKay bijections at $p=3$ for $\GL_n(q)$ and $\GU_n(q)$ when $3 \nmid q$,
building on Theorem A and some results of \cite{Olsson} and \cite{FS}.


\begin{thmC}
\textit{Let $n$ be any positive integer and let $q=p^a$ be any power of a prime $p \neq 3$.  Then there is a canonical bijection between 
the set of complex irreducible characters of $3'$-degree of $\GL_n(q)$ and that of 
$\NB_{\GL_n(q)}(P)$ for $P \in \Syl_3(\GL_n(q))$. Similarly,  there is a canonical bijection between 
the set of complex irreducible characters of $3'$-degree of $\GU_n(q)$ and that of 
$\NB_{\GU_n(q)}(P)$ for $P \in \Syl_3(\GU_n(q))$.}
\end{thmC}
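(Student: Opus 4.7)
The strategy is to exploit Lusztig's Jordan decomposition of characters, combined with the Fong--Srinivasan classification \cite{FS} and Olsson's combinatorial analysis \cite{Olsson}, to reduce Theorem C to Theorem A. I present the argument for $G = \GL_n(q)$; the case of $\GU_n(q)$ follows by the same scheme via Ennola duality, reading $-q$ in place of $q$ whenever the multiplicative order modulo $3$ is used. Let $e \in \{1,2\}$ be the order of $q$ modulo $3$ and write $n = em + r$ with $0 \leq r < e$. If $T$ denotes the Sylow $3$-subgroup of the cyclic torus $\GL_1(q^e) \hookrightarrow \GL_e(q)$, then a Sylow $3$-subgroup $P$ of $G$ is Sylow in $T \wr \fS_m$ extended trivially across a $\GL_r(q)$-factor, and consequently $\NB_G(P)$ involves the Sylow normalizer $\NB_{\fS_m}(P_m)$ in its description, where $P_m \in \Syl_3(\fS_m)$.

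The first main step is to parametrize both sides. Writing $\Irr(G) = \bigsqcup_{[s]} \EC(G,s)$ as a union of Lusztig series indexed by semisimple classes in $G^* \simeq G$, the Jordan decomposition degree formula shows that $\chi_{(s,\psi)} \in \EC(G,s)$ has $3'$-degree if and only if $[G^*:\cent{G^*}{s}]_3 = 1$ (equivalently, $\cent{G^*}{s}$ contains a Sylow $3$-subgroup of $G^*$) and $\psi$ is a $3'$-degree unipotent character of $\cent{G^*}{s}$. For such $s$, the centralizer is a direct product of general linear and unitary groups over extensions of $\FF_q$, and \cite{FS} together with Olsson's hook formula in \cite{Olsson} parametrize the $3'$-degree unipotent characters of each factor by $3'$-degree characters of a corresponding symmetric group $\fS_{m_i}$. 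The analogous analysis applied to $\NB_G(P)$, in which each wreath factor $T \wr \fS_{m_i}$ contributes (via Clifford theory over $T$) $3'$-degree characters controlled by $3'$-degree characters of $\NB_{\fS_{m_i}}(P_{m_i})$, parametrizes $\Irr_{3'}(\NB_G(P))$ by the \emph{same} semisimple labels $[s]$ together with tuples from $\prod_i \Irr_{3'}(\NB_{\fS_{m_i}}(P_{m_i}))$.

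The bijection itself is then constructed semisimple-label by semisimple-label: for each $[s]$, apply the canonical bijection of Theorem A factor-by-factor to obtain a canonical identification between the two sets of tuples; concatenating across $[s]$ yields the desired map $\Irr_{3'}(G) \longleftrightarrow \Irr_{3'}(\NB_G(P))$. The main obstacle is verifying \emph{canonicity}, namely that this correspondence commutes with the absolute Galois group action over $\QQ$. Since the semisimple label $[s]$ is preserved by construction, Galois-equivariance reduces to checking that the Fong--Srinivasan and Olsson parametrizations of the unipotent pieces are Galois-equivariant on the relevant $3'$-degree characters (which should follow from rationality properties of Deligne--Lusztig characters), together with the Galois-equivariance of the Theorem A bijection on the symmetric group factors. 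A secondary subtlety is to pin down the precise structure of $\NB_G(P)$ inside the natural $3$-split Levi subgroup so that the matching of semisimple labels produces an actual bijection of character sets, rather than merely an equality of cardinalities.
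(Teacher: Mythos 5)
Your high-level plan — parametrize $\Irr_{3'}(G)$ via (semisimple, unipotent) labels, use Olsson/Fong--Srinivasan to reduce the unipotent piece to $\fS_m$, and feed into Theorem~A — is indeed the framework the paper uses. But the proposal leaves the hardest work as acknowledged "obstacles" and "subtleties," and this is where the real content lies, so the sketch has genuine gaps rather than merely omitting routine details.

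First, the paper does not directly match $\Irr_{3'}(G)$ with $\Irr_{3'}(\NB_G(P))$ semisimple-label by semisimple-label. It interposes an explicit subgroup $H^\eps$ with $\NB_G(P)\le H^\eps\le G$ and splits the bijection into a \emph{global} step $\Irr_{3'}(G)\to\Irr_{3'}(H^\eps)$ and a \emph{local} step $\Irr_{3'}(H^\eps)\to\Irr_{3'}(\NB_G(P))$. Crucially, $H^\eps$ is \emph{not} the same in the two congruence cases: when $3\mid(q-\eps)$ it is $G^\eps_1\wr\fS_n$ and the global bijection (Proposition~\ref{prop q-1}) is essentially transparent; but when $3\mid(q+\eps)$ it is $(\GL_1(q^2)\rtimes_\eps C_2)\wr\fS_m$ with $n=2m$, and the global bijection (Theorem~\ref{thm q+1}) requires a non-obvious recoding in which a pair of $\fS_{m_j}$-labels attached to the same $g\in G^\eps_1$ must be fused into a single partition of $2m_g$ via its $2$-core and $2$-quotient. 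Nothing in your sketch anticipates this recoding, and a straight label-matching across $[s]$ would not produce a bijection in that case. Moreover, when $3\mid(q+\eps)$ and $n$ is odd, one needs a further separate step (Theorem~\ref{odd-even}) splitting off a $G^\eps_1(q)$-factor, using the combinatorics of $2$-quotients of odd-size partitions (\cite[5.16]{R}).

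Second, your statement that $\Irr_{3'}(\NB_G(P))$ is parametrized "by the same semisimple labels $[s]$ together with tuples from $\prod_i\Irr_{3'}(\NB_{\fS_{m_i}}(P_{m_i}))$" compresses exactly what Theorem~\ref{local} works hard to justify. After modding out $\NB_H(P)$ by $[Q^m,P]$, one gets a product $M_0\times\cdots\times M_t$ indexed by the $3$-adic digits of $m$, and the parametrization of $\Irr_{3'}(M_i)$ when $a_i=2$ splits into two genuinely different subcases ((d1) and (d2) in the paper), one of which involves $2$-sets of characters rather than ordered pairs. This bookkeeping is precisely what makes the eventual map both well-defined and canonical, and it is precisely what guarantees compatibility with breaking $m$ into $3$-adic pieces — the hypothesis $(\ast)$ there is supplied by your Theorem~A but is not sufficient by itself. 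Finally, the Galois- and automorphism-equivariance you flag is resolved in the paper not by abstract rationality of Deligne--Lusztig characters but by checking that each intermediate step (the extension in Lemma~\ref{wr-ext}, the global maps, and the local map) is equivariant, using the explicit action recorded in Remark~\ref{rem:actionaut}. So the strategy is right, but the intermediate wreath-product subgroups, the $2$-core/$2$-quotient recoding in the $3\mid(q+\eps)$ case, the odd-$n$ reduction, and the $a_i=2$ local bookkeeping are all missing pieces you would need to supply.
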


We will prove that the canonical bijections constructed in Theorem C commute with the action of the absolute Galois group $\Gal(\bar\QQ/\QQ)$. 
This implies, for instance, the following corollary.

\begin{CorD}
\textit{Let $n$ be any positive integer and let $q=p^a$ be any power of a prime $p \neq 3$. Let $G$ be either $\GL_n(q)$ or $\GU_n(q)$, and let 
$P$ be a Sylow $3$-subgroup of $G$. Then the fields of values of the $3'$-degree complex irreducible characters of $G$ 
are equal to the fields  of values of the $3'$-degree complex irreducible characters of $\NB_{G}(P)$.}
\end{CorD}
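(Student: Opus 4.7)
The plan is to derive Corollary D as a formal consequence of the Galois-equivariance of the canonical McKay bijections of Theorem C, a property asserted in the paragraph immediately preceding the corollary. Let $\Gamma = \Gal(\bar\QQ/\QQ)$ act on $\Irr(G)$ and $\Irr(\NB_G(P))$ in the standard way, $\chi^\sigma(g) = \sigma(\chi(g))$. For any irreducible character $\chi$ of a finite group, the field of values $\QQ(\chi)$ equals the fixed field of $\Stab_\Gamma(\chi) = \{\sigma \in \Gamma : \chi^\sigma = \chi\}$. Writing $\chi \mapsto \chi^*$ for the bijection of Theorem C, Galois-equivariance means $(\chi^\sigma)^* = (\chi^*)^\sigma$ for every $\sigma \in \Gamma$. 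This forces $\Stab_\Gamma(\chi) = \Stab_\Gamma(\chi^*)$, hence $\QQ(\chi) = \QQ(\chi^*)$, and the equality of the two sets of fields of values -- which is exactly the content of Corollary D -- follows. In fact one obtains the strictly stronger statement that the bijection itself preserves fields of values character by character.

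The substantive content is therefore the verification of Galois-equivariance of the bijection built in Theorem C. That bijection is assembled from (a) the Jordan decomposition of $3'$-degree characters of $\GL_n(q)$ and $\GU_n(q)$ due to Fong--Srinivasan and Olsson, which labels such characters by pairs $(s,\lam)$ where $s$ is a $3$-regular semisimple class in the dual group and $\lam$ is a $3'$-degree character of an associated product of wreath products of symmetric groups; and (b) the symmetric-group correspondence of Theorem A (applied, via its refinement Theorem \ref{t:2main correspondence p^k}, to each wreath-product factor) together with a parallel local parametrization of $\Irr_{3'}(\NB_G(P))$ coming from the analogous Fong--Srinivasan local analysis. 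To obtain equivariance it suffices to show that each of these ingredients intertwines the natural $\Gamma$-actions on both sides.

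The main obstacle, I expect, will be tracking the $\Gamma$-action on the semisimple label $s$ under Jordan decomposition and verifying that it matches the action on the local normalizer side; in the unitary case this requires compatibility of Galois with the twisted Frobenius, since eigenvalues live in $\FF_{q^2}^\times$ rather than $\FF_q^\times$. Equivariance of the Theorem A bijection is itself a nontrivial ingredient: ordinary characters of $\fS_n$ are rational-valued, so $\Gamma$ acts trivially on $\Irr_{3'}(\fS_n)$, and equivariance amounts to the assertion that the $3'$-degree characters of $\NB_{\fS_n}(P_n)$ are also rational-valued, which must be checked from the explicit construction underlying Theorem A and its restriction-compatibility refinement. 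Once these compatibilities are in place, Galois-equivariance of the bijection in Theorem C holds, and the formal argument of the first paragraph yields Corollary D.
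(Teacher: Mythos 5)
Your proposal is correct and follows exactly the paper's approach: Corollary D is a formal consequence of the Galois-equivariance of the bijection of Theorem C, since equivariance together with injectivity forces $\Stab_\Gamma(\chi) = \Stab_\Gamma(\chi^*)$, hence $\QQ(\chi) = \QQ(\chi^*)$. Your later paragraphs correctly identify what goes into verifying equivariance (tracking $\Gamma$ on the semisimple label $s$, and the rationality of the $3'$-degree characters of $\NB_{\fS_n}(P_n)$, which the paper notes at the end of Section 4), but those verifications belong to the proof of Theorem C itself --- where the paper carries them out piece by piece for $\Theta$, $\Phi_{q,\eps}$ and $\Psi$ --- not to the derivation of Corollary D, which the paper, like you, treats as an immediate formal consequence.
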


\section{Preliminaries}

\subsection{Extensions of characters and wreath products}\label{pre_ch}

In this section we will recall some known results on extensions of characters in 
wreath products and other character theoric results  which will be needed in the sequel. 

Let $\fS_m$ be the symmetric group of degree $m$.  
If $K$ is a finite group, write $K^m=\underbrace{K\times\cdots\times K}_m$
for the $m$-fold external direct product of $K$.  
The natural action of $\fS_m$ on the direct factors of $K^m$
induces an action via automorphisms of $\fS_m$ on $K^m$, so we can  define the wreath product $H=K\wr \fS_m:=K^m\rtimes \fS_m$. 
As it is customary, we denote the elements of $H$ by
$(x_1,\ldots, x_n; y)$, where $x_i\in K$ and $y\in \fS_m$. 
Also, we will identify subgroups and elements of both the base group $K^m$ of $H$, 
and the permutation group $\fS_m$ acting on the factors of $K^m$, with their natural embeddings in $H$. 

With the same notation as above,
recall that the irreducible characters of $K^m$ are of the form
$$
\psi=\psi_1\otimes\cdots\otimes\psi_m\, ,
$$ where $\psi_i\in\irr{K}$ for each $i$, and $\otimes$ denotes the external direct product of characters. 
In particular, note that $\psi\in\irr{K^m}$  is invariant in $H$ if and only if
$$
\psi=\psi_1^{\otimes m}:=\underbrace{\psi_1\otimes\psi_1\otimes\cdots\otimes\psi_1}_m \, 
$$for a uniquely determined $\psi_1\in\irr K$. In this case, $\psi$ has a distinguished \color{black}
irreducible extension  to $H$ defined by the following statement.

\begin{lemma}\label{wr-ext}
Let $K$ be any finite group with an irreducible $\CC K$-module $U$. For any $m \in \ZZ_{\geq 2}$, consider 
$H = K \wr \fS_m = K^m \rtimes \fS_m$. 
Let $t$ be any transposition in the standard subgroup $\fS_m$ of $H$.
Then the irreducible module $U^{\otimes m} = U \otimes \ldots \otimes U$ of $K^m \lhd H$ has a unique extension
$V$ to $H$, with character say $\theta$ satisfying the following conditions:

\begin{enumerate}[\rm(i)]
\item If $2 \nmid \dim(U)$ then $\det{\theta}|_{\fS_m}$ is trivial.

\item If $2|\dim(U)$ but $m \geq 3$, then $\det{\theta}|_{\fS_m}$ is trivial and $\theta(t) \in \ZZ_{> 0}$.

\item If $2|\dim(U)$ and $m = 2$, then $\theta(t) \in \ZZ_{> 0}$.
 
\end{enumerate}
In particular, if $K^m \leq L \leq H$ and $\gcd(|K|,|L/K^m|) = 1$, then $V|_L$ is the canonical extension in 
Theorem \ref{canonical_ext} (below). Moreover, if a finite group $A$ acts on $K$ and we extend its action to
$H$ by letting $A$ act trivially on $\fS_m$, then the map $U \mapsto V$ is $A$-equivariant. Furthermore,
the map $U \mapsto V$ is $\Gal(\bar\QQ/\QQ)$-equivariant.
\end{lemma}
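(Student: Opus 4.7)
The approach is to construct a natural extension first and then use the normalization conditions to isolate the unique $V$. I would define $\tilde V$ as the extension of $U^{\otimes m}$ to $H$ obtained by letting $\fS_m$ permute the tensor factors; this is the standard construction and always exists. Since every extension of the $H$-invariant character of $K^m$ differs by a linear character of $H/K^m \cong \fS_m$, and $\fS_m$ has only two linear characters (the trivial one and $\sgn$), there are exactly two extensions, with characters $\tilde\theta$ and $\tilde\theta \cdot \sgn$.

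The core of the argument is a short parity computation. A direct calculation on the standard monomial basis of $U^{\otimes m}$ yields $\tilde\theta(t) = d^{m-1}$ where $d = \dim U$, and $\det{\tilde\theta}(t) = (-1)^{d(d-1)d^{m-2}/2}$, which is the sign of the induced permutation of basis vectors. Combined with the identity $\det{\tilde\theta \cdot \sgn}|_{\fS_m} = \det{\tilde\theta}|_{\fS_m} \cdot \sgn^{d^m}$, this settles the three cases: if $d$ is odd then the two extensions differ in the restriction of their determinant to $\fS_m$, so condition (i) picks out a unique one; if $d$ is even and $m \geq 3$, then $d(d-1)d^{m-2}/2 = d^{m-1}(d-1)/2$ is even (because $d^{m-1}/2$ already contains the factor $2^{m-2}$), hence both extensions restrict trivially to $\fS_m$ on the determinant and are distinguished by the sign of $\theta(t) = \pm d^{m-1}$; and if $d$ is even with $m = 2$, only the sign of $\theta(t)$ distinguishes them. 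This yields existence and uniqueness in each case simultaneously.

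For the compatibility with the canonical extension of Theorem \ref{canonical_ext}, it suffices to verify that the determinantal order $o(V|_L)$ equals $o(U^{\otimes m})$ whenever $\gcd(|K|,|L/K^m|)=1$. In cases (i) and (ii) this is immediate since $\det{\theta}|_{\fS_m}$ is trivial, so the determinant of $V|_L$ agrees on $L/K^m$ with the trivial character and its order is unchanged by restriction. In case (iii) with $L \neq K^m$, we would need $|L/K^m| = 2$ coprime to $|K|$, hence $|K|$ odd; but then $d$ divides $|K|$ and so is odd, contradicting the even dimension hypothesis. So case (iii) reduces to the trivial restriction $V|_{K^m} = U^{\otimes m}$.

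Finally, $A$-equivariance follows because $A$ acts trivially on $\fS_m$, the natural extension construction is functorial in $U$ (so $(\tilde V)^\alpha$ is the natural extension of $(U^\alpha)^{\otimes m}$), and both normalization conditions (triviality of $\det{\theta}|_{\fS_m}$ and positivity of $\theta(t)$) are preserved by $A$. The $\Gal(\bar\QQ/\QQ)$-equivariance is analogous: triviality of a linear character and positivity of an integer are Galois invariants, and $\theta(t) = \pm d^{m-1} \in \ZZ$ is fixed by every $\sigma$. The main technical step to watch is the parity computation in case (ii), which must rule out the possibility that the two normalization conditions are incompatible; once that is settled, the rest of the proof is formal.
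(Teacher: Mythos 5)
Your proof is correct and follows essentially the same route as the paper's: both construct the natural permutation extension on the monomial basis, invoke Gallagher's theorem to see that the only other extension is the twist by $\sgn$, compute $\tilde\theta(t) = d^{m-1}$ and the sign $(-1)^{d^{m-1}(d-1)/2}$ of the permutation of basis vectors induced by $t$, and then do the same case-by-parity analysis on $\sgn^{d^m}$ and $d^{m-1}(d-1)/2$. Your additional remark that case (iii) cannot occur when $K^m < L \leq H$ with $\gcd(|K|,|L/K^m|)=1$ (because $d \mid |K|$ forces $d$ odd) is exactly the paper's observation that "we are not in case (iii)," just spelled out.
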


\begin{proof}
First, fix a basis $(e_i \mid 1 \leq i \leq d)$ of the space $U$. Then we let any element $g = (h_1, \ldots ,h_m) \in K^m$ act on 
$U^{\otimes m}$ via 
$$g: e_{i_1} \otimes e_{i_2} \otimes \ldots \otimes e_{i_m} \mapsto h_1(e_{i_1}) \otimes h_2(e_{i_2}) \otimes \ldots \otimes h_m(e_{i_m}),~~
1 \leq i_1, \ldots,i_m \leq d.$$ 
Then we can define the action of any element $\pi$ in the natural subgroup $\fS_m$ on $U^{\otimes m}$ via
$$\pi:e_{i_1} \otimes e_{i_2} \otimes \ldots \otimes e_{i_m} \mapsto e_{i_{\pi(1)}} \otimes e_{i_{\pi(2)}} \otimes \ldots \otimes e_{i_{\pi(m)}},~~
1 \leq i_1, \ldots,i_m \leq d.$$  
One can check that this turns $U^{\otimes m}$ into an $H$-module which we denote by $V_1$. Note that the trace of $t$ on $V_1$ is 
$d^{m-1} > 0$. By Gallagher's theorem  \cite[Corollary 6.17]{isaacs}, if $V_2 \not\cong V_1$ is another extension of $U^{\otimes m}$ to $H$, then
$V_2 \cong V_1 \otimes W$, where $W$ is the sign $\CC \fS_m$-module. In particular, the trace of $t$ on $V_2$ is $-d^{m-1} < 0$, and 
the claim in (iii) follows, taking $V = V_1$. 

Let $\theta_i$ be the character afforded by $V_i$, $i = 1,2$. Note that the action of $t$ on the basis vectors of $U^{\otimes m}$ 
gives rise to a disjoint product of $d^{m-1}(d-1)/2$ $2$-cycles. Hence $\det{\theta_1}(t) = 1$ in the case of (ii), and we again done here.
Assume $2 \nmid d$. Then $\det{\theta_2}(t) = \det{\theta_1}(t)(-1)^{d^m} = -\det{\theta_1}(t)$, and so there is a unique $i \in \{1,2\}$
such that $\det{\theta_i}(t) = 1$, whence we are done in (i) as $\fS_m$ is generated by transpositions.

Assume furthermore that $K^m < L \leq H$ and $\gcd(|K|,|L/L^m|)=1$. Then we are not in case (iii), and so $o(\theta|_L) = o(\theta|_{K^m})$
by the previous result,
whence $\theta|_L$ is the canonical extension singled out in Theorem \ref{canonical_ext}.

It is also straightforward to check that the map $U \mapsto V$ is equivariant under the action of both $A$ and $\Gal(\bar\QQ/\QQ)$.
\end{proof}

We remark that the extension described in the previous lemma
is uniquely determined for a fixed complement $\fS_m$ of $K^m$ in $H$
as in the statement, but that different choices of such a complement may 
produce different extensions of the module $U^{\otimes m}$ (differing at most by tensoring by a sign module).
Also, it is clear that conjugate complements of $K^m$ in $H$ inducing the wreath product
$K\wr \fS_m$ would lead to the same extension of  $U^{\otimes m}$
to $H$ described in Lemma \ref{wr-ext}. 

Another situation in which uniquely determined \color{black} extensions of characters exist is  the following. 
We point out that next theorem is a particular case of a slightly more general fact, but we will only need
the result as stated below (see Corollary 8.16 of \cite{isaacs} and the comments before it for details).

\begin{thm}\label{canonical_ext}
Let $N\nor G$ and let $\psi\in\irr{N}$ be $G$-invariant. Suppose that 
$(|G/N|, |N|)=1$. Then $\psi$ has a uniquely determined irreducible canonical extension $\hat\psi\in\irr G$, characterized 
by the property that $\hat\psi$ and $\psi$ have the same determinantal order: $o(\hat\psi) = o(\psi)$.
\end{thm}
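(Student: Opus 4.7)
The plan is to establish existence by a standard coprime-extension criterion, then pin down the canonical $\hat\psi$ by a determinantal bookkeeping argument. For existence: since $\psi$ is $G$-invariant with $\gcd(|G/N|,|N|)=1$, I would invoke the usual coprime extension result. The obstruction to extending $\psi$ is a cohomology class in $H^2(G/N,\CC^{\times})$ whose order divides both $|G/N|$ (a general fact about group cohomology) and a power of $|N|$ (coming from the projective representation of $G$ afforded by $\psi$). Since $\gcd(|G/N|,|N|)=1$, the obstruction vanishes and some extension $\chi\in\Irr(G)$ of $\psi$ exists. Alternatively, one may apply Schur--Zassenhaus to a suitable central extension.

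By Gallagher's theorem, every extension of $\psi$ then has the form $\chi\otimes\beta$ for a unique linear $\beta\in\Irr(G/N)$, and $\det{\chi\otimes\beta}=\det{\chi}\cdot\beta^{\psi(1)}$. Put $D:=\det\chi$ and $\alpha:=D^{|N|}$. Because $D|_N=\det\psi$ has order $o(\psi)$ dividing $|N|$, we get $\alpha|_N=1$, so $\alpha$ is a linear character of $G/N$. The group of linear characters of $G/N$ has exponent dividing $m:=|G/N|$; since $\psi(1)\mid|N|$ and $\gcd(|N|,m)=1$, the integer $\psi(1)|N|$ is coprime to $m$, and hence the map $\beta\mapsto\beta^{\psi(1)|N|}$ is a bijection of that character group. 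Thus there is a unique $\beta_0$ with $\beta_0^{\psi(1)|N|}=\alpha^{-1}$, and I set $\hat\psi:=\chi\otimes\beta_0$.

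To verify the characterization and uniqueness, write $T:=\det\hat\psi$. By construction $T^{|N|}=\alpha\cdot\alpha^{-1}=1$, so $o(T)\mid|N|$. On the one hand, $T|_N=\det\psi$ gives $o(\psi)\mid o(T)$; on the other hand, $T^{o(\psi)}$ is trivial on $N$, hence is a character of $G/N$ whose order divides both $o(T)\mid|N|$ and $m$, and these are coprime, forcing $T^{o(\psi)}=1$ and so $o(T)\mid o(\psi)$. Therefore $o(\hat\psi)=o(\psi)$, and any other extension $\chi\otimes\beta$ with the same determinantal order must satisfy $\det{\chi\otimes\beta}^{|N|}=1$, i.e.\ $\beta^{\psi(1)|N|}=\alpha^{-1}$, which by the bijectivity above forces $\beta=\beta_0$. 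The main obstacle will be Step~1: existence genuinely requires the coprime cohomology vanishing and cannot be bypassed, whereas the rest is a clean determinantal computation that hinges on $\gcd(|N|,|G/N|)=1$ turning the exponentiation $\beta\mapsto\beta^{\psi(1)|N|}$ into a bijection on the linear characters of $G/N$.
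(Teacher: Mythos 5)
Your argument is correct and, unlike the paper, self-contained: the paper gives no proof of this theorem but defers to Isaacs (Corollary 8.16 of his book and the discussion preceding it). Isaacs' route, which proves the slightly more general statement under the hypothesis $(|G:N|,\,o(\psi)\psi(1))=1$, establishes existence by reducing to the case where $G/N$ is a $p$-group via his Theorem 8.15 and then handling that case; you instead prove existence in one stroke by showing the obstruction class in $H^2(G/N,\CC^\times)$ vanishes, and then both approaches pin down the canonical extension by a determinantal argument. Your Gallagher-plus-determinant bookkeeping is clean, complete, and correct, and in fact it works verbatim under Isaacs' weaker hypothesis (since $o(\psi)\psi(1)$ divides $|N|^2$, your coprimality appeals all go through).

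Two small points worth tightening. First, the assertion that the obstruction class has order dividing a power of $|N|$ deserves a sentence: choosing the intertwining operators $P(g)$ with $P(n)=\psi(n)$ for $n\in N$ and $\det P(g)=1$ on a set of coset representatives of $N$ in $G$, the resulting factor set takes values in the group of $\bigl(\psi(1)\cdot o(\psi)\bigr)$-th roots of unity, so the obstruction lies in the image of $H^2(G/N,\,\mu)\to H^2(G/N,\CC^\times)$ where $\mu$ denotes the finite group of $|N|^2$-th roots of unity; hence its order divides $|N|^2$, and combined with the general bound by $|G/N|$ the class is trivial. Second, the parenthetical ``apply Schur--Zassenhaus to a suitable central extension'' does not work as literally stated, because the relevant central extension is by the infinite group $\CC^\times$; one must first cut down to a finite group of roots of unity as above before Schur--Zassenhaus (or the cohomological vanishing it encodes) can be invoked.
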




Suppose that $N\nor G$ and that $\psi\in\irr{N}$ is extendible to $G$. 
Then recall that Gallagher's Corollary 6.17 of \cite{isaacs} provides a complete description
of the set of irreducible characters $\irr{G\, \, |\, \psi}$ of $G$ lying over $\psi$, in terms of 
the characters of $G/N$.


At certain points in our arguments we will also need to use some known correspondences of characters. 
We refer the reader to \cite[Theorem 6.11]{isaacs} for a reference on the standard Clifford's
correspondence. 
We next include for future reference two useful results which are contained in M. Isaacs'  work \cite[Corollaries 4.2 and 4.3]{bp}.

\begin{proposition}\label{isa_lemma}
Let $N\nor G$ and $K\leq G$ with $NK = G$ and
$N\cap K = M$. 

\begin{enumerate}
\item \label{restriction}{Suppose that $\theta \in \irr{N}$ is invariant in $G$ and assume $\varphi=\theta_M$ is
irreducible. Then restriction of characters defines a bijection from $\irr{G\, |\, \theta}$ into $\irr{K \, |\, \varphi}$.} 
\item \label{induction}{Suppose that $\varphi\in\irr {M}$ is invariant in $K$ and assume $\theta=\varphi^N$ is
irreducible. Then induction of characters defines a bijection from $\irr{K \, |\, \varphi}$ into $\irr{G\, |\, \theta}$.}
\end{enumerate}
\end{proposition}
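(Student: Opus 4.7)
The proposition is a standard consequence of Clifford theory, and I would prove both parts inside the same conceptual framework using projective representations and the canonical isomorphism $K/M \cong G/N$ that comes from $NK = G$ and $N \cap K = M$ (well-defined since $k_1M = k_2M \iff k_1^{-1}k_2 \in M \leq N \iff k_1N = k_2N$, and surjective because every $g \in G$ is $nk$ with $gN = kN$).

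For (i), since $\theta$ is $G$-invariant, standard Clifford theory (cf.\ Chapter~11 of \cite{isaacs}) gives a projective representation $\tilde P \colon G \to \GL(V)$ extending the $\CC N$-module $V$ affording $\theta$, with some cocycle $\alpha \in Z^2(G/N, \CC^\times)$, such that the assignment $W \mapsto V \otimes W$ is a bijection between irreducible projective $G/N$-modules $W$ with cocycle $\alpha^{-1}$ and $\irr{G\, |\, \theta}$. Because $\theta_M = \varphi$ is irreducible, $V$ is already irreducible as a $\CC M$-module, so restricting $\tilde P$ to $K$ yields a projective representation of $K$ extending the $M$-action on $V$, and its cocycle in $Z^2(K/M, \CC^\times)$ matches $\alpha$ under the canonical isomorphism $K/M \cong G/N$. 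Hence $\irr{K\, |\, \varphi}$ is parameterized by the very same set of projective $G/N$-modules, via $W \mapsto V \otimes W$ viewed now as a $K$-module. Since the $K$-module $V \otimes W$ is literally the restriction from $G$ of $V \otimes W$ viewed as a $G$-module, the bijection is restriction.

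For (ii), the argument is dual. One first observes that $\theta = \varphi^N$ is $G$-invariant: for $g = nk$ with $n \in N$, $k \in K$, we have $\theta^g = (\varphi^N)^k = (\varphi^k)^N = \varphi^N = \theta$, using that $\varphi$ is $K$-invariant. The $\CC M$-module $U$ affording $\varphi$ extends to a projective representation $Q \colon K \to \GL(U)$ with cocycle $\beta \in Z^2(K/M, \CC^\times)$. I would then build a projective representation $\tilde Q \colon G \to \GL(V)$ on $V = \Ind_M^N U$ extending the $N$-action (so affording $\theta$), by declaring, for $g = nk$ and $n' \in N$, $u \in U$, that $\tilde Q(nk)(n' \otimes u) = (nkn'k^{-1}) \otimes Q(k)u$. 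Using $N \cap K = M$ and the $\CC M$-balance in the tensor product, this is well-defined up to $\beta$, and its cocycle on $G/N$ is precisely the pullback of $\beta$ under $K/M \cong G/N$. Once this is set up, both $\irr{G\, |\, \theta}$ and $\irr{K\, |\, \varphi}$ are in canonical bijection with irreducible projective $G/N$-modules $W$ with the same cocycle, via $W \mapsto V \otimes W$ and $W \mapsto U \otimes W$ respectively; a direct computation using $V = \Ind_M^N U$ together with the projection formula gives $\Ind_K^G(U \otimes W) \cong V \otimes W$, identifying the resulting bijection with induction.

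The main technical obstacle is the construction in part~(ii), namely extending $Q$ from a projective representation of $K$ on $U$ to a projective representation $\tilde Q$ of $G$ on $\Ind_M^N U$ with a cocycle matching $\beta$ via $G/N \cong K/M$. The construction itself is forced by compatibility with the $N$-action on $\Ind_M^N U$, but verifying well-definedness and checking the precise cocycle relation requires some care. With this in hand, both parts become a direct comparison of two parameterizations of the same set of irreducible projective representations of the common quotient group, and the restriction/induction description of the bijection follows by matching the respective module constructions.
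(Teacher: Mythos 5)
The paper does not give its own proof of this proposition; it cites Corollaries 4.2 and 4.3 of \cite{bp}. Your argument via projective representations and the isomorphism $K/M\cong G/N$ is correct, and it is a valid structural alternative to the cited source. For (i), the decisive point---that the projective extension $\tilde P$ of $\theta$ to $G$ restricts on $K$ to a projective extension of $\varphi$ with cocycle matching $\alpha$ under $K/M\cong G/N$---is exactly right: both $\irr{G\,|\,\theta}$ and $\irr{K\,|\,\varphi}$ then carry the same Clifford parameterization by projective modules $W$ of the common quotient, and the $K$-module $V\otimes W$ is literally the restriction of the $G$-module $V\otimes W$. For (ii), your formula $\tilde Q(nk)(n'\otimes u)=(nkn'k^{-1})\otimes Q(k)u$ does define a projective representation on $V=\Ind_M^N U$, provided $Q$ is normalized as in \cite[Theorem~11.2]{isaacs} so that $Q(mk)=Q(m)Q(k)$ and $Q(km)=Q(k)Q(m)$ for $m\in M$, $k\in K$ (which is always possible); with that normalization, the ambiguity in the decomposition $g=nk$ (unique up to $n\mapsto nm^{-1}$, $k\mapsto mk$ with $m\in M$) is absorbed by $\CC M$-balance, and the factor set of $\tilde Q$ at $(n_1k_1,n_2k_2)$ comes out to be exactly $\beta(k_1,k_2)$, i.e.\ the pullback of $\beta$ along $K/M\cong G/N$. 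One small imprecision: the ``projection formula'' does not apply off the shelf, because $U$ is only projectively a $K$-module rather than an ordinary one; but the isomorphism $\Ind_K^G(U\otimes W)\cong V\otimes W$ is easily verified directly by the map $n_i\otimes(u\otimes w)\mapsto(n_i\otimes u)\otimes w$, where the $n_i\in N$ form a transversal of $K$ in $G$ (equivalently of $M$ in $N$)---this is precisely where the hypotheses $G=NK$ and $N\cap K=M$ enter. The upshot of your route is that both parts of the proposition are seen at once as two parameterizations of the same set of projective modules of $G/N\cong K/M$; the price is the cocycle bookkeeping, which the direct character-theoretic arguments in \cite{bp} sidestep.
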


\subsection{Background on combinatorics and representations of $\fS_n$}
In this section we recall some basic facts in the representation theory of symmetric groups. We refer the reader to \cite{James}, \cite{JK} or \cite{OlssonBook} for a more detailed account. 
A partition $\lambda=(\lambda_1,\lambda_2,\dots ,\lambda_\ell)$ is a finite non-increasing sequence of positive integers. We say that $\lambda_i$ is a part of $\lambda$. We call $\ell=\ell(\lambda)$ the length of $\lambda$ and say that $\lambda$ is a partition of $|\lambda|=\sum\lambda_i$. We denote by $\mathcal{P}(n)$ the set consisting of all the partitions of $n$. 
The Young diagram of $\lambda$ is the set $[\lambda]=\{(i,j)\in{\mathbb N}\times{\mathbb N}\mid 1\leq i\leq\ell(\lambda),1\leq j\leq\lambda_i\}$. Here we orient ${\mathbb N}\times{\mathbb N}$ with the $x$-axis pointing right and the $y$-axis pointing down.
Given $\lambda\in\mathcal{P}(n)$, we denote by $\lambda'$ the \textit{conjugate partition} of $\lambda$.

We say that a partition $\mu$ is contained in $\lambda$, written $\mu\subseteq\lambda$, if $\mu_i\leq\lambda_i$, for all $i\geq1$. When this occurs, we call the non-negative sequence $\lambda\smallsetminus\mu=(\lambda_i-\mu_i)_{i=1}^\infty$ a skew-partition, and we call the diagram
$[\lambda\smallsetminus\mu]=\{(i,j)\in{\mathbb N}\times{\mathbb N}\mid 1\leq i\leq\ell(\lambda),\mu_i<j\leq\lambda_i\}$ 
a skew Young diagram.

The rim of $[\lambda]$ is the subset of nodes ${\mathcal R}(\lambda)=\{(i,j)\in[\lambda]\mid (i+1,j+1)\not\in[\lambda]\}$. Given $(r,c)\in[\lambda]$, the associated rim-hook is $h(r,c)=\{(i,j)\in {\mathcal R}(\lambda)\mid r\leq i,c\leq j\}$.
Then $h=h(r,c)$ contains $e:=\lambda_r-r+\lambda'_c-c+1$ nodes, in $a(h)=\lambda_r-c+1$ columns
and $\lambda'_c-r+1$ rows. We call $\mathrm{leg}(h)=\lambda'_c-r$ the leg-length of $h$. We refer to $h$ as an $e$-hook of $\lambda$. The integer $e$ is sometimes denoted as $|h|$. Removing $h$ from $[\lambda]$ gives the Young diagram of a partition denoted $\lambda- h$. In particular $|\lambda- h|=|\lambda|-e$ and $h$ is a skew Young diagram. 

Let $h$ be an $e$ rim-hook which has leg-length $\ell$. 
The associated hook partition of $e$ is $\hat h=(e-\ell,1^\ell)$. So $(e-\ell,1^\ell)$ coincides with its $(1,1)$ rim-hook. 
Given any natural number $n$, we denote by $\mathcal{H}(n)$ the subset of $\mathcal{P}(n)$ consisting of all \textit{hook partitions} of $n$. 
Namely $\mathcal{H}(n)=\{(n-x,1^x)\ |\ 0\leq x\leq n-1\}$.

Let $p$ be a (not necessarily prime) natural number, we say that a partition $\gamma$ is a $p$-core if it does not have removable $p$-hooks. Given a partition $\lambda$ of $n$, its $p$-core $\lambda_{(p)}$ is the partition obtained by successively removing $p$-hooks from $\lambda$. 
We will denote by $\lambda^{(p)}=(\lambda^0,\lambda^1,\ldots,\lambda^{p-1})$ the $p$-quotient of $\lambda$ (see \cite[Section 3]{OlssonBook} for the definition). Following Olsson's convention we always consider abaci consisting of a multiple of $p$ number of beads. This assumption guarantees that the $p$-quotient of a partition is well defined (see either \cite[Chapter 2]{JK} or again \cite[Section 3]{OlssonBook} for the definition of James' abacus).

For any given $\lambda\in\mathcal{P}(n)$ we denote by $T(\lambda)$ its $p$-core tower (see \cite[Section 6]{OlssonBook}). In particular we find convenient to think of the $p$-core tower as a sequence $T(\lambda)=(T_{j}(\lambda))_{j=0}^{\infty}$, 
where the $j$-th layer (or row) $T_j(\lambda)$ consists of $p^j$ $p$-core partitions. We denote by $|T_j(\lambda)|$ the sum of the sizes of the $p$-cores in the $j$-th layer of $T(\lambda)$. 
We have that $n=\sum_j|T_j(\lambda)|p^j$. Moreover, every partition $\lambda$ of $n$ is uniquely determined by its $p$-core tower. This follows by repeated applications of  \cite[Proposition 3.7]{OlssonBook}. 

There is a natural one-to-one correspondence between irreducible characters of $\fS_n$ and partitions of $n$. We denote by $\chi^\lambda$ the irreducible character labelled by $\lambda\in\mathcal{P}(n)$. 
The Murnaghan-Nakayama rule (see \cite[Page 79]{James}) allows to explicitly compute the entire character table of $\fS_n$.
A useful corollary of this rule is the so called hook-length formula (see \cite[Chapter 20]{James}). This gives a closed fowmula for the degree of any irreducible character of $\fS_n$. In particular, for a hook partition 
$h=(n-x,1^x)\in\mathcal{H}(n)$ we have that $\chi^h(1)={n-1\choose x}$.

Let now $p$ be a prime number. Given any partition $\lambda$ of $n$, the $p$-core tower of $\lambda$ encodes all the information concerning the $p$-part of the degree of the corresponding irreducible character $\chi^\lambda(1)$. In particular, in \cite{Mac} the following fundamental result is proved. 

\begin{thm}\label{Mac}
Let $p$ be a prime number and let $\lambda$ be a partition of $n\in\mathbb{N}$.
Suppose that $n=\sum_{j=0}^ka_jp^j$ is the $p$-adic expansion of $n$. Then $\chi^\lambda\in\mathrm{irr}_{p'}(\fS_n)$ if and only if $|T_j(\lambda)|=a_j$ for all $j\in\mathbb{N}$.
\end{thm}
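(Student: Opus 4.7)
The plan is to compute the $p$-adic valuation $\nu_p(\chi^\lambda(1))$ explicitly in terms of the $p$-core tower of $\lambda$, and show it is nonnegative with equality exactly when $|T_j(\lambda)|=a_j$ for all $j$. By the hook-length formula,
$$\chi^\lambda(1)=\frac{n!}{\prod_{h\in[\lambda]}|h|},$$
so $\nu_p(\chi^\lambda(1))=\nu_p(n!)-\nu_p\bigl(\prod_h|h|\bigr)$. Legendre's formula gives $\nu_p(n!)=(n-s_p(n))/(p-1)$, where $s_p(n)=\sum_j a_j$. The main task is therefore to evaluate $\nu_p\bigl(\prod_h|h|\bigr)=\sum_{j\geq 1}N_{p^j}(\lambda)$, where $N_{p^j}(\lambda)$ counts the hooks of $\lambda$ of length divisible by $p^j$.

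The key input is the standard abacus/quotient fact: the hooks of $\lambda$ of length divisible by $p$ are in bijection with the union of all hooks of the $p$-quotient components $\lambda^0,\ldots,\lambda^{p-1}$, and under this bijection hook lengths are divided by $p$. Consequently $N_{p^{j}}(\lambda)=\sum_{i}N_{p^{j-1}}(\lambda^i)$, and by iteration $N_{p^j}(\lambda)$ equals the sum of the sizes of all components at depth $j$ in the repeated $p$-quotient construction. Using $|\lambda^{i_1,\ldots,i_k}|=|T_k(\lambda)|_{i_1,\ldots,i_k}+p\sum_{i_{k+1}}|\lambda^{i_1,\ldots,i_{k+1}}|$ one unfolds this into
$$N_{p^j}(\lambda)=\sum_{k\geq j}p^{k-j}|T_k(\lambda)|,$$
and then
$$\nu_p\bigl(\textstyle\prod_h|h|\bigr)=\sum_{j\geq 1}\sum_{k\geq j}p^{k-j}|T_k(\lambda)|=\sum_{k\geq 1}|T_k(\lambda)|\cdot\frac{p^k-1}{p-1}.$$

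Combining with Legendre and using the identity $n=\sum_{j\geq 0}p^j|T_j(\lambda)|$ (which itself follows by iterating $|\lambda|=|\lambda_{(p)}|+p\sum_i|\lambda^i|$), the valuation collapses to
$$\nu_p(\chi^\lambda(1))=\frac{1}{p-1}\Bigl(\sum_{k\geq 0}|T_k(\lambda)|-s_p(n)\Bigr).$$
Finally, I would invoke the elementary lemma that among all ways of writing $n=\sum b_j p^j$ with $b_j\in\mathbb{Z}_{\geq 0}$, the digit-sum $\sum b_j$ is minimized precisely when $b_j=a_j$ for every $j$: any $b_j\geq p$ can be reduced by ``carrying'', which strictly decreases $\sum b_j$. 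Applying this to $b_j=|T_j(\lambda)|$ yields $\sum|T_j(\lambda)|\geq s_p(n)$, with equality iff $|T_j(\lambda)|=a_j$ for all $j$. Thus $\nu_p(\chi^\lambda(1))\geq 0$ always, and equality — i.e. $\chi^\lambda\in\mathrm{Irr}_{p'}(\fS_n)$ — holds precisely under the stated condition.

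The only genuinely nontrivial step is the recursive identity $N_{p^j}(\lambda)=\sum_i N_{p^{j-1}}(\lambda^i)$; I would either cite it from \cite{JK} or \cite{OlssonBook} or sketch it via James' abacus, where a $p$-hook of $\lambda$ corresponds to sliding a single bead one position up on one of the $p$ runners, and hooks of length divisible by $p^j$ on a runner correspond bijectively to hooks of length divisible by $p^{j-1}$ on the associated quotient partition.
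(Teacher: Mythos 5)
The paper does not give a proof of this theorem: it is quoted directly from Macdonald \cite{Mac}. Your reconstruction is correct and follows essentially the same route as the original: the hook-length formula together with Legendre's formula $\nu_p(n!)=(n-s_p(n))/(p-1)$ reduces the problem to computing $\nu_p\bigl(\prod_h|h|\bigr)=\sum_{j\ge1}N_{p^j}(\lambda)$; the abacus bijection giving $N_{p^j}(\lambda)=\sum_i N_{p^{j-1}}(\lambda^i)$ unfolds this into $\sum_{k\ge1}|T_k(\lambda)|\,(p^k-1)/(p-1)$; and subtracting, using $n=\sum_{k\ge0}p^k|T_k(\lambda)|$, collapses the valuation to $\tfrac{1}{p-1}\bigl(\sum_k|T_k(\lambda)|-s_p(n)\bigr)$, so that the elementary digit-sum minimality lemma closes the argument. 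Every step checks out; the only ingredient you should be explicit about citing is the quotient/hook bijection, which is standard (James--Kerber or Olsson).
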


\subsection{Some remarks on the Littlewood-Richardson rule}

In order to prove some of the main results in this article, we will make extensive use of a generalised version of the Littlewood-Richardson rule. 
This is probably known to experts, but we were not able to find an appropriate reference in the literature. 
For the reader's convenience we start by recalling the classic Littlewood-Richardson rule. 

\begin{definition}
Let ${\mathcal A}=a_1,\dots,a_k$ be a sequence of positive integers. The type of $\mathcal{A}$ is the sequence of non-negative integers $m_1,m_2,\dots$ where $m_i$ is the number of occurrences of $i$ in $a_1,\dots,a_k$. We say that $\mathcal{A}$ is a \textit{reverse lattice sequence} if the type of its prefix $a_1,\dots,a_j$ is a partition, for all $j\geq1$. Equivalently, for each $j=1,\dots,k$ and $i\geq2$
$$
|\{u\mid 1\leq u\leq j, a_u=i-1\}|\geq |\{v\mid 1\leq v\leq j, a_v=i\}|.
$$
\end{definition}

Let $\alpha\vdash n$ and $\beta\vdash m$ be partitions. The outer tensor product $\chi^\alpha\otimes\chi^\beta$ is an irreducible character of ${\mathfrak S}_n\times{\mathfrak S}_m$. Inducing this character to ${\mathfrak S}_{n+m}$ we may write
$$
(\chi^\alpha\otimes\chi^\beta){\uparrow^{{\mathfrak S}_{n+m}}}=\sum_{\gamma\vdash(n+m)}C_{\alpha,\beta}^\gamma\chi^\gamma.
$$
The \textit{Littlewood-Richardson rule} asserts that $C_{\alpha,\beta}^\gamma$ is zero if $\alpha\not\subseteq\gamma$ and otherwise equals the number of ways to replace the nodes of the diagram $[\gamma\smallsetminus\alpha]$ by natural numbers such that 
\begin{enumerate}
\item The numbers are weakly increasing along rows.
\item The numbers are strictly increasing down the columns.
\item The sequence obtained by reading the numbers from right to left and top to bottom is a reverse lattice sequence of type $\beta$.
\end{enumerate}
We call any such configuration a \textit{Littlewood-Richardson configuration of} $[\gamma\smallsetminus\alpha]$.

\medskip

The Littlewood-Richardson rule describes the decomposition of the restriction of any irreducible character of $\fS_n$ to any $2$-fold Young subgroup of $\fS_n$.
In this article we will need to have some control on the restriction of irreducible characters to arbitrary Young subgroups of $\fS_n$. 
We start by introducing some notation. 
Let $k\in\mathbb{N}_{\geq 2}$ and let $n_i\in\mathbb{N}_{\geq 1}$ for all $i\in\{1,\ldots, k\}$. Let $\rho$ be a partition of $n=n_1+n_2+\cdots +n_k$ and let $\mu_i$ be a partition of $n_i$ for all $i\in\{1,\ldots, k\}$. Denote by $\underline{\mu}$ the sequence $\underline{\mu}=(\mu_1,\ldots, \mu_k)$.

\begin{definition}\label{def:Amuro}
For all $j\in\{1,\ldots, k\}$ let $m_j=n_1+\cdots +n_j$.
We denote by $\mathcal{A}_{\underline{\mu}}^\rho$ the subset of $\mathcal{P}(m_1)\times \mathcal{P}(m_2)\times\cdots\times\mathcal{P}(m_k)$ consisting of all the sequences of partitions $(\lambda_1,\lambda_2,\ldots,\lambda_k)$ such that: 
\begin{enumerate}
\item[(i)] $\mu_1=\lambda_1\subseteq \lambda_2\subseteq\cdots\subseteq\lambda_k=\rho$.
\item[(ii)] $\lambda_{j+1}\smallsetminus\lambda_j$ admits a Littlewood-Richardson configuration of type $\mu_{j+1}$, for all $j\in\{1,\ldots, k-1\}$.
\end{enumerate}

Notice that condition (ii) is equivalent to say that the Littlewood-Richardson coefficient $$C_{\lambda_j\mu_{j+1}}^{\lambda_{j+1}}=\left\langle (\chi^{\lambda_j}\otimes\chi^{\mu_{j+1}})
\big\uparrow_{\fS_{m_j}\times\fS_{n_{j+1}}}^{\fS_{m_{j+1}}}, \chi^{\lambda_{j+1}}\right\rangle\neq 0, \ \ \ \text{for all $j$}.$$
\end{definition}

\begin{remark}\label{rmk:Amurosigma}
Let $\underline{\mu}=(\mu_1,\ldots, \mu_k)$ and $\rho$ be as above. 
Denote by $\underline{\mu}^\star=(\mu_1,\ldots, \mu_{k-1})$ the sequence obtained by removing the last component from $\underline{\mu}$. 
For $\sigma\in\mathcal{P}(n-n_k)$ we denote by $\mathcal{A}_{\underline{\mu}}^\rho(\sigma)$ the subset of 
$\mathcal{A}_{\underline{\mu}}^\rho$ defined by 
$$\mathcal{A}_{\underline{\mu}}^\rho(\sigma)=
\{\underline{\lambda}\in\mathcal{A}_{\underline{\mu}}^\rho\ :\ \lambda_{k-1}=\sigma\}.$$
By definition we have that $\mathcal{A}_{\underline{\mu}}^\rho(\sigma)\neq \emptyset$ if and only if $\mathcal{A}_{\underline{\mu}^\star}^\sigma\neq \emptyset$ and 
$C_{\sigma\mu_k}^\rho\neq 0$.
Moreover, for every $\sigma\in\mathcal{P}(n-n_k)$ such that
$C_{\sigma\mu_k}^\rho\neq 0$, the map 
$$(\lambda_1,\ldots, \lambda_{k-2},\sigma)\mapsto (\lambda_1,\ldots, \lambda_{k-2},\sigma,\rho),$$
is a bijection between $\mathcal{A}_{\underline{\mu}^\star}^\sigma$ and 
$\mathcal{A}_{\underline{\mu}}^\rho(\sigma)$.
\end{remark}

\begin{lemma}\label{lem:neq0}
Keeping the notation introduced above, we have that 
$$\left\langle (\chi^{\mu_1}\otimes\chi^{\mu_2}\otimes\cdots\otimes\chi^{\mu_k})
\big\uparrow_{\fS_{n_1}\times\cdots\times\fS_{n_{k}}}^{\fS_{n}}, \chi^{\rho}\right\rangle\neq 0,$$
if and only if $\mathcal{A}_{\underline{\mu}}^\rho\neq\emptyset$.
\end{lemma}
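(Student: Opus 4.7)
The plan is to prove the lemma by induction on $k$. The base case $k=2$ is an immediate translation of the classical Littlewood--Richardson rule recalled above: the set $\mathcal{A}_{(\mu_1,\mu_2)}^\rho$ is non-empty precisely when $\mu_1\subseteq\rho$ and $[\rho\smallsetminus\mu_1]$ admits a Littlewood--Richardson configuration of type $\mu_2$, which is the same as saying that $C^\rho_{\mu_1,\mu_2}=\langle(\chi^{\mu_1}\otimes\chi^{\mu_2})\big\uparrow^{\fS_n}_{\fS_{n_1}\times\fS_{n_2}},\chi^\rho\rangle\neq 0$.

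For the inductive step, I would use transitivity of induction to split the induction through the intermediate Young subgroup $\fS_{n-n_k}\times\fS_{n_k}$:
$$
(\chi^{\mu_1}\otimes\cdots\otimes\chi^{\mu_k})\big\uparrow_{\fS_{n_1}\times\cdots\times\fS_{n_k}}^{\fS_n}
=\Bigl((\chi^{\mu_1}\otimes\cdots\otimes\chi^{\mu_{k-1}})\big\uparrow^{\fS_{n-n_k}}_{\fS_{n_1}\times\cdots\times\fS_{n_{k-1}}}\otimes\,\chi^{\mu_k}\Bigr)\big\uparrow^{\fS_n}_{\fS_{n-n_k}\times\fS_{n_k}}.
$$
Writing $(\chi^{\mu_1}\otimes\cdots\otimes\chi^{\mu_{k-1}})\big\uparrow^{\fS_{n-n_k}}=\sum_{\sigma\vdash n-n_k}a_\sigma\,\chi^\sigma$ with $a_\sigma\in\mathbb{Z}_{\geq 0}$, and applying the classical Littlewood--Richardson rule to the outer induction, gives
$$
\left\langle(\chi^{\mu_1}\otimes\cdots\otimes\chi^{\mu_k})\big\uparrow^{\fS_n},\chi^\rho\right\rangle
=\sum_{\sigma\vdash n-n_k}a_\sigma\cdot C^\rho_{\sigma,\mu_k}.
$$
Since every summand is a non-negative integer, the left-hand side is non-zero if and only if there exists some $\sigma\vdash n-n_k$ with both $a_\sigma\neq 0$ and $C^\rho_{\sigma,\mu_k}\neq 0$.

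By the inductive hypothesis applied to the sequence $\underline{\mu}^\star=(\mu_1,\ldots,\mu_{k-1})$, the condition $a_\sigma\neq 0$ is equivalent to $\mathcal{A}_{\underline{\mu}^\star}^\sigma\neq\emptyset$. Hence the existence of such a $\sigma$ is equivalent to the existence of a $\sigma\vdash n-n_k$ with $\mathcal{A}_{\underline{\mu}^\star}^\sigma\neq\emptyset$ and $C^\rho_{\sigma,\mu_k}\neq 0$. By Remark \ref{rmk:Amurosigma}, this is in turn equivalent to the existence of some $\sigma$ with $\mathcal{A}_{\underline{\mu}}^\rho(\sigma)\neq\emptyset$. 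Finally, since $\mathcal{A}_{\underline{\mu}}^\rho$ is the disjoint union of the sets $\mathcal{A}_{\underline{\mu}}^\rho(\sigma)$ as $\sigma$ ranges over $\mathcal{P}(n-n_k)$, this existence is equivalent to $\mathcal{A}_{\underline{\mu}}^\rho\neq\emptyset$, completing the induction.

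The proof is essentially bookkeeping once the right factorisation of the induction is chosen: no step requires a new combinatorial input beyond the classical Littlewood--Richardson rule and the bijection in Remark \ref{rmk:Amurosigma}. The only minor subtlety to keep in mind is that one must argue with non-negativity to pass from ``the whole sum is non-zero'' to ``some summand is non-zero'', which is why the statement is phrased as non-vanishing rather than an explicit formula for the multiplicity; the same argument in fact yields the identity $\langle(\chi^{\mu_1}\otimes\cdots\otimes\chi^{\mu_k})\big\uparrow^{\fS_n},\chi^\rho\rangle=|\mathcal{A}_{\underline{\mu}}^\rho|$, but we only need the weaker qualitative version here.
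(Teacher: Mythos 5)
Your proof is correct and follows essentially the same route as the paper: induction on $k$, transitivity of induction through the intermediate Young subgroup $\fS_{n-n_k}\times\fS_{n_k}$, and the non-negativity of the coefficients to pass between non-vanishing of the sum and non-vanishing of a summand (the paper simply handles the two implications separately). One small correction to your closing aside: the refined identity is not $\langle(\chi^{\mu_1}\otimes\cdots\otimes\chi^{\mu_k})\big\uparrow^{\fS_n},\chi^\rho\rangle=|\mathcal{A}_{\underline{\mu}}^\rho|$ but rather $\sum_{\underline{\lambda}\in\mathcal{A}_{\underline{\mu}}^\rho}d_{\underline{\lambda}}$ as in Lemma \ref{lem:dlambda}, since the Littlewood--Richardson coefficients $C^{\lambda_{j+1}}_{\lambda_j\mu_{j+1}}$ entering $d_{\underline{\lambda}}$ can exceed $1$ (already for $k=2$ the set $\mathcal{A}_{\underline{\mu}}^\rho$ has at most one element while $C^\rho_{\mu_1\mu_2}$ can be arbitrary); this does not affect your argument for the lemma.
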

\begin{proof}
We proceed by induction on $k$. 
If $k=2$ then the statement follows from the Littlewood-Richardson rule.

Assume that $\mathcal{A}_{\underline{\mu}}^\rho\neq\emptyset$. 
Let $k\geq 3$ and let $\underline{\lambda}=(\lambda_1,\ldots,\lambda_k)\in\mathcal{A}_{\underline{\mu}}^\rho$. 
From Definition \ref{def:Amuro} we have that 
$$\left\langle (\chi^{\lambda_{k-1}}\otimes\chi^{\mu_{k}})
\big\uparrow_{\fS_{n-n_k}\times\fS_{n_{k}}}^{\fS_{n}}, \chi^{\rho}\right\rangle\neq 0.$$
Moreover, the sequence $(\lambda_1,\lambda_2,\ldots,\lambda_{k-1})\in\mathcal{A}_{\underline{\mu}^\star}^{\lambda_{k-1}},$
where $\underline{\mu}^\star:=(\mu_1,\ldots, \mu_{k-1})$. 
By inductive hypothesis we deduce that 
$$\left\langle (\chi^{\mu_1}\otimes\chi^{\mu_2}\otimes\cdots\otimes\chi^{\mu_{k-1}})
\big\uparrow_{\fS_{n_1}\times\cdots\times\fS_{n_{k-1}}}^{\fS_{n-n_k}}, \chi^{\lambda_{k-1}}\right\rangle\neq 0.$$
We now let $B=\fS_{n_1}\times\cdots\times\fS_{n_{k}}$, $C=\fS_{n_1}\times\cdots\times\fS_{n_{k-1}}$ and $H=\fS_{n-n_k}\times \fS_{n_k}$.
Moreover, given any sequence of partitions $\underline{\nu}=(\nu_1,\ldots, \nu_s)$ we denote by $\chi^{\underline{\nu}}$ the irreducible character $\chi^{\nu_1}\otimes\cdots\otimes\chi^{\nu_s}$.
We conclude by observing that, 
$$\left\langle \chi^{\underline{\mu}}\big\uparrow_{B}^{\fS_{n}}, \chi^{\rho}\right\rangle= 
\left\langle \big(\chi^{\underline{\mu}^\star}\big\uparrow_{C}^{\fS_{n-n_k}}\otimes\chi^{\mu_{k}}\big)
\big\uparrow_{H}^{\fS_{n}}
, \chi^{\rho}\right\rangle\geq \left\langle (\chi^{\lambda_{k-1}}\otimes\chi^{\mu_{k}})
\big\uparrow_{H}^{\fS_{n}}, \chi^{\rho}\right\rangle\neq 0.$$

\medskip

Assume now that $\left\langle \chi^{\underline{\mu}}\big\uparrow_{B}^{\fS_{n}}, \chi^{\rho}\right\rangle\neq 0$. Since $\chi^{\underline{\mu}}\big\uparrow_{B}^{\fS_{n}}=
\big(\chi^{\underline{\mu}^\star}\big\uparrow_{C}^{\fS_{n-n_k}}\otimes\chi^{\mu_{k}}\big)
\big\uparrow_{H}^{\fS_{n}}$, we deduce that there exists a partition 
$\sigma\in \mathcal{P}(n-n_k)$ such that $\chi^\sigma$ is an irreducible constituent of $\chi^{\underline{\mu}^\star}\big\uparrow_{C}^{\fS_{n-n_k}}$ and such that 
$C_{\sigma\mu_k}^\rho\neq 0$. 
By inductive hypothesis there exists a sequence  $(\lambda_1,\ldots, \lambda_{k-2},\sigma)\in\mathcal{A}_{\underline{\mu}^\star}^{\sigma}\neq \emptyset$. It is now easy to observe that $(\lambda_1,\ldots,\lambda_{k-2}, \sigma,\rho)\in \mathcal{A}_{\underline{\mu}}^\rho$.
\end{proof}

The following proposition is a generalisation of Lemma \ref{lem:neq0}. 
We decided to keep two distinct statements for the convenience of the reader. 
First we need to introduce a last piece of notation. 
For $\underline{\lambda}=(\lambda_1,\ldots,\lambda_k)\in \mathcal{A}_{\underline{\mu}}^\rho$ we denote by $d_{\underline{\lambda}}$ the natural number defined by $$d_{\underline{\lambda}}=
\prod_{j=1}^{k-1}C_{\lambda_j\mu_{j+1}}^{\lambda_{j+1}}.$$
Again for any sequence of partitions $\underline{\nu}=(\nu_1,\ldots, \nu_s)$ we denote by $\chi^{\underline{\nu}}$ the irreducible character $\chi^{\nu_1}\otimes\cdots\otimes\chi^{\nu_s}$.
\begin{lemma}\label{lem:dlambda}
Let $\underline{\mu}=(\mu_1,\ldots,\mu_k)\in\mathcal{P}(n_1)\times\cdots\times\mathcal{P}(n_k)$ and let $\rho\in \mathcal{P}(n)$, where $n=n_1+\cdots +n_k$.    
Then $$\left\langle \chi^{\underline{\mu}}\big\uparrow_{\fS_{n_1}\times\cdots\times\fS_{n_{k}}}^{\fS_{n}}, \chi^{\rho}\right\rangle=
\sum_{\underline{\lambda}\in\mathcal{A}_{\underline{\mu}}^\rho}d_{\underline{\lambda}} .$$
\end{lemma}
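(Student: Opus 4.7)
The plan is to prove Lemma \ref{lem:dlambda} by induction on $k$, following the same skeleton used in the proof of Lemma \ref{lem:neq0}, but now tracking multiplicities rather than just non-vanishing. The key ingredients are transitivity of induction, the Littlewood--Richardson rule, and the bookkeeping provided by Remark \ref{rmk:Amurosigma}.

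For the base case $k=2$, one has $\underline{\mu}=(\mu_1,\mu_2)$, and any $\underline{\lambda}\in\mathcal{A}_{\underline{\mu}}^\rho$ is forced to be $(\mu_1,\rho)$, so the right-hand side collapses to $C_{\mu_1\mu_2}^\rho$, which is exactly the Littlewood--Richardson rule applied to $(\chi^{\mu_1}\otimes\chi^{\mu_2})\uparrow_{\fS_{n_1}\times\fS_{n_2}}^{\fS_n}$.

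For the inductive step, write $\underline{\mu}^\star=(\mu_1,\ldots,\mu_{k-1})$ and set $B=\fS_{n_1}\times\cdots\times\fS_{n_k}$, $C=\fS_{n_1}\times\cdots\times\fS_{n_{k-1}}$, $H=\fS_{n-n_k}\times\fS_{n_k}$. By transitivity of induction,
$$\chi^{\underline{\mu}}\big\uparrow_{B}^{\fS_n}=\bigl(\chi^{\underline{\mu}^\star}\big\uparrow_{C}^{\fS_{n-n_k}}\otimes\chi^{\mu_k}\bigr)\big\uparrow_{H}^{\fS_n}.$$
Decomposing $\chi^{\underline{\mu}^\star}\uparrow_{C}^{\fS_{n-n_k}}=\sum_{\sigma\vdash(n-n_k)}a_\sigma\chi^\sigma$ and taking inner products with $\chi^\rho$ gives, by the Littlewood--Richardson rule,
$$\left\langle \chi^{\underline{\mu}}\big\uparrow_{B}^{\fS_{n}}, \chi^{\rho}\right\rangle=\sum_{\sigma\vdash(n-n_k)}a_\sigma\,C_{\sigma\mu_k}^{\rho}.$$
By the inductive hypothesis, $a_\sigma=\sum_{\underline{\lambda}'\in\mathcal{A}_{\underline{\mu}^\star}^\sigma}d_{\underline{\lambda}'}$, so
$$\left\langle \chi^{\underline{\mu}}\big\uparrow_{B}^{\fS_{n}}, \chi^{\rho}\right\rangle=\sum_{\sigma\vdash(n-n_k)}\sum_{\underline{\lambda}'\in\mathcal{A}_{\underline{\mu}^\star}^\sigma}d_{\underline{\lambda}'}\,C_{\sigma\mu_k}^{\rho}.$$

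To finish, I invoke Remark \ref{rmk:Amurosigma}: the sets $\mathcal{A}_{\underline{\mu}}^\rho(\sigma)$ for $\sigma\vdash(n-n_k)$ partition $\mathcal{A}_{\underline{\mu}}^\rho$, and the map $(\lambda_1,\ldots,\lambda_{k-2},\sigma)\mapsto(\lambda_1,\ldots,\lambda_{k-2},\sigma,\rho)$ is a bijection $\mathcal{A}_{\underline{\mu}^\star}^\sigma\to\mathcal{A}_{\underline{\mu}}^\rho(\sigma)$. Under this bijection the multiplicity factor behaves as $d_{\underline{\lambda}}=d_{\underline{\lambda}'}\cdot C_{\sigma\mu_k}^\rho$ directly from the definition $d_{\underline{\lambda}}=\prod_{j=1}^{k-1}C_{\lambda_j\mu_{j+1}}^{\lambda_{j+1}}$, since the new factor introduced by lengthening the sequence is precisely $C_{\sigma\mu_k}^\rho$. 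Substituting into the double sum rearranges it as $\sum_{\underline{\lambda}\in\mathcal{A}_{\underline{\mu}}^\rho}d_{\underline{\lambda}}$, completing the induction.

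The proof is conceptually routine given the earlier lemma; the only delicate point is the combinatorial bookkeeping when identifying the multiplicity $d_{\underline{\lambda}}$ associated to a lengthened sequence with the product $d_{\underline{\lambda}'}\cdot C_{\sigma\mu_k}^\rho$, which is where the Littlewood--Richardson coefficient appearing from the final induction step matches cleanly with the last factor in the definition of $d_{\underline{\lambda}}$. Apart from this matching, the argument is just repeated application of Frobenius reciprocity and transitivity of induction.
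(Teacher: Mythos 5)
Your proposal is correct and follows essentially the same argument as the paper: induction on $k$, with base case the Littlewood--Richardson rule, transitivity of induction through $H=\fS_{n-n_k}\times\fS_{n_k}$, the inductive hypothesis applied to $\chi^{\underline{\mu}^\star}\uparrow_C^{\fS_{n-n_k}}$, and Remark \ref{rmk:Amurosigma} to reindex the double sum over $(\sigma,\underline{\lambda}')$ as a single sum over $\underline{\lambda}\in\mathcal{A}_{\underline{\mu}}^\rho$. The only cosmetic difference is that you name the intermediate coefficients $a_\sigma$ and spell out the identity $d_{\underline{\lambda}}=d_{\underline{\lambda}'}\cdot C_{\sigma\mu_k}^\rho$ explicitly, which the paper leaves implicit in the final rearrangement.
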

\begin{proof}
We proceed by induction on $k\in\mathbb{N}_{\geq 2}$. If $k=2$ the statement coincide with the Littlewood-Richardson rule. 
Suppose that $k\geq 3$ and denote by $B$ and $C$ the Young subgroups of $\fS_n$ defined by 
$$B=\fS_{n_1}\times\cdots\times\fS_{n_k},\ \ \ C=\fS_{n_1}\times\cdots\times\fS_{n_{k-1}}.$$
For any sequence $\underline{\lambda}=(\lambda_1,\ldots,\lambda_k)\in\mathcal{A}_{\underline{\mu}}^\rho$ we have that 
$\underline{\lambda}^\star=(\lambda_1,\ldots,\lambda_{k-1})\in\mathcal{A}_{\underline{\mu}^\star}^{\lambda_{k-1}}$, 
where $\underline{\mu}^\star=(\mu_1,\ldots,\mu_{k-1})$. 
Moreover, by inductive hypothesis we have that 
$$\left\langle \chi^{\underline{\mu}^\star}
\big\uparrow_{C}^{\fS_{n-n_k}}, \chi^{\lambda_{k-1}} \right\rangle=
\sum_{\underline{\tau}\in\mathcal{A}_{\underline{\mu}^\star}^{\lambda_{k-1}}}
d_{\underline{\tau}}.$$
It follows that 
\begin{eqnarray*}
\left\langle\chi^{\underline{\mu}}\big\uparrow_{B}^{\fS_{n}}, \chi^{\rho}\right\rangle
&=& \left\langle \big(\chi^{\underline{\mu}^\star}
\big\uparrow_{C}^{\fS_{n-n_k}}\otimes \chi^{\mu_k}\big)\big\uparrow_{\fS_{n-n_k}\times \fS_{n_k}}^{\fS_n}, \chi^{\rho}\right\rangle\\
&=& \sum_{\sigma\in\mathcal{P}(n-n_k)}\big(
\sum_{\underline{\tau}\in\mathcal{A}_{\underline{\mu}^\star}^{\sigma}}
d_{\underline{\tau}}\big)
\left\langle\chi^{\sigma}\otimes\chi^{\mu_k}\big\uparrow_{\fS_{n-n_k}\times \fS_{n_k}}^{\fS_{n}}, \chi^{\rho}\right\rangle\\
&=& \sum_{\sigma\in\mathcal{P}(n-n_k)}
\big(
\sum_{\underline{\tau}\in\mathcal{A}_{\underline{\mu}^\star}^{\sigma}}
d_{\underline{\tau}}\big)C_{\sigma\mu_k}^\rho\\
&=& \sum_{\sigma\in\mathcal{P}(n-n_k)}
\big(
\sum_{\underline{\lambda}\in\mathcal{A}_{\underline{\mu}}^{\rho}(\sigma)}
d_{\underline{\lambda}}\big)\\
&=& \sum_{\underline{\lambda}\in\mathcal{A}_{\underline{\mu}}^\rho}
d_{\underline{\lambda}},
\end{eqnarray*}
where the last two equalities above follow from the discussion in Remark \ref{rmk:Amurosigma}.
\end{proof}

The key step in the proof of Lemma \ref{lem:1} below is to use Lemma \ref{lem:dlambda} in the specific case where $\rho\in\mathcal{H}(p^k)$  and $\underline{\mu}=\underbrace{(\mu,\mu,\ldots,\mu)}_{p-\text{times}}$, for some $\mu\in\mathcal{P}(p^{k-1})$.

\section{A canonical McKay bijection for $\fS_{3^k}$}

Our next goal is to construct a canonical bijection between $\mathrm{Irr}_{3'}(\fS_{3^k})$ and $\mathrm{Irr}_{3'}(\N_{\fS_{3^k}}(P_{3^k}))$, where $P_{3^k}\in\syl{3}{\fS_{3^k}}$. As mentioned in the introduction, this will be the key step towards the proof of Theorem A.
More precisely, we will first prove Theorem B, by describing a \textit{global} bijection between the irreducible characters of $p'$-degree of $\fS_{p^k}$ and those of the stabilizer $\fS_{p^{k-1}}\wr \fS_p$
of a set partition of $\{ 1,\ldots, p^k\}$ into $p$ sets of size $p^{k-1}$\color{black}. Afterwards we will fix $p=3$ and we will construct a \textit{local} bijection between 
the irreducible characters of $3'$-degree  of $\fS_{3^{k-1}}\wr \fS_3$ and those of the normaliser of a Sylow $3$-subgroup of $\fS_{3^{k}}$. 

\subsection{A global bijection and the proof of Theorem B}
Let $p$ be an odd prime and let $n=p^k$ for some $k\in\mathbb{N}$.
This section is devoted to the proof of Theorem B.  
In particular we show the existence of a natural bijection between $\mathrm{Irr}_{p'}(\fS_{p^k})$ and $\mathrm{Irr}_{p'}(\fS_{p^{k-1}}\wr \fS_p)$, canonically defined by the restriction functor.

We will use the following easy observation, whose proof is omitted: 

\begin{lemma} \label{complement}
Let  $H \leq \fS_{p^k}$ be the stabilizer
of a set partition of $\{ 1,\ldots, p^k\}$ into $p$ sets of size $p^{k-1}$. Then
$H=\fS_{p^{k-1}}\wr X$, where $X\cong\fS_{p}$ is a subgroup of $\fS_{p^k}$
which induces by conjugation  the full permutation group on the
direct factors of $(\fS_{p^{k-1}})^p$.
Furthermore, $X$ is unique up to $H$-conjugacy. 
\end{lemma}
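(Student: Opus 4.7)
The plan is threefold: (i) construct an $X$ by fixing a bijection from $\{1,\ldots,p^{k-1}\}$ onto each block of the partition, (ii) show that any other valid $X'$ must arise from such a family of bijections, and (iii) conjugate by an element of the base group $K := \prod_{i=1}^{p} \fS_{B_i}$ to match them up. Here $B_1,\ldots,B_p$ denote the blocks.

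For the construction, fix bijections $\beta_i \colon \{1,\ldots,p^{k-1}\} \to B_i$; these identify $\fS_{B_i}$ with $\fS_{p^{k-1}}$ and hence $K$ with $(\fS_{p^{k-1}})^p$. For $\pi \in \fS_p$, define $\tau_\pi \in \fS_{p^k}$ by $\tau_\pi(\beta_i(j)) := \beta_{\pi(i)}(j)$, and set $X := \{\tau_\pi : \pi \in \fS_p\}$. Then $X \le H$ (it preserves the partition), $X \cong \fS_p$, $X \cap K = 1$, and $|K|\cdot|X| = |H|$, so $H = K \rtimes X = \fS_{p^{k-1}} \wr X$. Conjugation by $\tau_\pi$ sends $\fS_{B_i}$ onto $\fS_{B_{\pi(i)}}$ via exactly the chosen identifications, realising the natural $\fS_p$-action on the $p$ direct factors.

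For uniqueness, let $X'$ be another subgroup as in the lemma and let $\tau'_\pi \in X'$ lift $\pi \in \fS_p$. The hypothesis forces conjugation by $\tau'_\pi$ to act on $K$ as pure permutation of the direct factors by $\pi$. In particular, whenever $\pi$ fixes $i$, conjugation by $\tau'_\pi$ centralises $\fS_{B_i}$; since $\fS_{p^{k-1}}$ is centreless for $p^{k-1} \ge 3$ (the case $k=1$ being trivial as $K=1$), $\tau'_\pi$ must act trivially on $B_i$. A parallel analysis for transpositions produces bijections $\beta'_i \colon \{1,\ldots,p^{k-1}\} \to B_i$ with $\tau'_\pi(\beta'_i(j)) = \beta'_{\pi(i)}(j)$. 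Defining $k \in K$ by $k|_{B_i} := \beta'_i \circ \beta_i^{-1}$, a direct computation yields $k \tau_\pi k^{-1} = \tau'_\pi$ for every $\pi$, so $kXk^{-1} = X'$.

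The main obstacle is the rigidity step — extracting from the condition that $X'$ induces the full permutation group on the direct factors that each $\tau'_\pi$ stabilising a block $B_i$ must act trivially on $B_i$. This is what rules out ``twisted'' complements of $K$ in $H$ (which would otherwise constitute additional $H$-conjugacy classes), so that all valid $X$'s form a single $H$-orbit; the remainder of the argument is routine bookkeeping with bijections.
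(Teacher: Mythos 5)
The paper explicitly omits the proof of this lemma (``whose proof is omitted''), so there is no argument in the paper to compare against; I can only assess your proposal on its own terms.

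Your construction of $X$, the order/intersection count giving $H=K\rtimes X$, and the final conjugating computation $k\tau_\pi k^{-1}=\tau'_\pi$ are all correct. The delicate point is the sentence \emph{``The hypothesis forces conjugation by $\tau'_\pi$ to act on $K$ as pure permutation of the direct factors by $\pi$.''} You present this as something to be \emph{extracted} from the condition that $X'$ induces the full permutation group on the set of direct factors, but under that weak reading (the homomorphism $X'\to\Sym(\{\fS_{B_1},\dots,\fS_{B_p}\})$ is onto) it is simply false for $k\ge 2$, and so is the uniqueness claim: fix an involution $c\in\fS_{p^{k-1}}$ and set $f(\sigma)=(c^{\sgn\sigma},\dots,c^{\sgn\sigma})\in K$; since $\sgn$ is a homomorphism and $f(\sigma)$ has constant entries, $f$ is a $1$-cocycle, and one checks it is not a coboundary (a coboundary would be trivial on all of $\fS_p$ because the even permutations act transitively on coordinates, forcing the twisting tuple to be constant). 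The resulting complement $X'=\{(f(\sigma);\sigma)\}$ still surjects onto $\fS_p$ in its action on the set of factors, yet it is not $H$-conjugate to $X$, and for odd $\pi$ stabilising $B_i$ the element $\tau'_\pi$ conjugates $\fS_{B_i}$ nontrivially (by $c$). So the weak reading both falsifies the lemma and breaks your rigidity step.

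What actually justifies your key sentence is the assertion $H=\fS_{p^{k-1}}\wr X'$ itself, read as: $X'$ is a complement to $K$ acting by \emph{coordinate permutation} with respect to some family of identifications of the $\fS_{B_i}$ with $\fS_{p^{k-1}}$. With this reading, the statement that $\tau'_\pi$ acts on $K$ by pure coordinate permutation is built into the hypothesis, and your centerless argument is exactly what upgrades ``centralises $\fS_{B_i}$'' to ``is the identity on $B_i$''; the construction of the $\beta'_i$ from $\beta'_1$ via a transversal of $\Stab_{\fS_p}(1)$ then goes through (well-definedness again uses the centerless fact). You should make this interpretation explicit in the rigidity step rather than phrasing it as a deduction from the permutation condition alone, since that phrasing overstates what the condition gives. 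With that clarification the proof is complete, and it matches the intended use of the lemma later in the paper, where it is invoked to show that the canonical extension of Lemma~\ref{wr-ext} does not depend on the choice of wreath-product complement.
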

\color{black}

By a slight abuse of notation, we will denote a subgroup $X\cong \fS_p$ of $H$ as in the previous
lemma simply by $\fS_p$. 
Note that the above result implies that if $\chi\in\irr{(\fS_{p^{k-1}})^p}$ is invariant in $H=\fS_{p^{k-1}}\wr \fS_p$, then
Lemma \ref{wr-ext} provides a uniquely defined extension of $\chi$ to $H$ which is independent of the complement of $(\fS_{p^{k-1}})^p$
in $H$ considered. 

\medskip

It is well known that $\mathrm{Irr}_{p'}(\fS_{p^k})=\{\chi^\lambda\ |\ \lambda\in\mathcal{H}(p^k)\}$.  On the other hand, if $\lambda\in\mathcal{H}(p^{k-1})$ and we write $\chi=\chi^\lambda\in\irrp{\fS_{p^{k-1}}}$, 
then 
$\chi^{\otimes p}\in\irrp{(\fS_{p^{k-1}})^p}$
has a uniquely defined extension $\tilde{\chi}\in\irrp{\fS_{p^{k-1}}\wr \fS_p}$ described in Lemma \ref{wr-ext}. 
In particular, by Gallagher's theorem  \cite[Corollary 6.17]{isaacs} for any $\mu\in\mathcal{H}(p)$
we have that $$\chi(\lambda, \mu):=\tilde{\chi}\cdot\chi^\mu\in\irrp{\fS_{p^{k-1}}\wr \fS_p\, |\, \chi^{\otimes p}}\, ,$$ and in fact 

$$\mathrm{Irr}_{p'}(\fS_{p^{k-1}}\wr \fS_p)=\{\chi(\lambda, \mu) |\ \lambda\in\mathcal{H}(p^{k-1}), \mu\in\mathcal{H}(p)\}.$$ We
will use this notation throughout this section. 
\color{black}


We start with a technical lemma, that will be crucial to construct the desired correspondence.

\begin{lemma}\label{lem:1}
Let $j\in\{0,1,\ldots, p^k-1\}$ and let $h_j=(p^k-j,j)\in \mathcal{H}(p^k)$. Suppose that $j=pm+x$ for some unique $m\in\{0,1,\ldots,p^{k-1}-1\}$ and 
$x\in\{0,1,\ldots, p-1\}$. Let $\lambda=(p^{k-1}-m,1^m)\in\mathcal{H}(p^{k-1})$. The following holds: 
\begin{equation*}
\left\langle \chi^{h_j}\downarrow_{(\fS_{p^{k-1}})^p}, (\chi^\mu)^{\otimes p}  \right\rangle=
\begin{cases}
0 &\text{ if } \mu\in\mathcal{P}(p^{k-1})\smallsetminus\{\lambda\}\,,\\
p-1\choose x &\text{ if } \mu=\lambda\,.\\
\end{cases}
\end{equation*} 
\end{lemma}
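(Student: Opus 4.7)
The plan is to apply Frobenius reciprocity and then the generalised Littlewood--Richardson formula from Lemma~\ref{lem:dlambda}. By Frobenius reciprocity,
$$\left\langle \chi^{h_j}\big\downarrow_{(\fS_{p^{k-1}})^p}, (\chi^\mu)^{\otimes p}\right\rangle = \left\langle (\chi^\mu)^{\otimes p}\big\uparrow^{\fS_{p^k}}_{(\fS_{p^{k-1}})^p}, \chi^{h_j}\right\rangle.$$
Applying Lemma~\ref{lem:dlambda} with $\underline{\mu}=(\mu,\ldots,\mu)$ ($p$ copies) and $\rho=h_j$, this equals $\sum_{\underline{\lambda}\in\mathcal{A}_{\underline{\mu}}^{h_j}} d_{\underline{\lambda}}$, where the sum is over chains $\mu=\lambda_1\subseteq\cdots\subseteq\lambda_p=h_j$ with $|\lambda_i|=ip^{k-1}$ and each skew shape $\lambda_{i+1}\smallsetminus\lambda_i$ admitting a Littlewood--Richardson configuration of type $\mu$.

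Since $h_j$ is a hook and every subdiagram of a hook is again a hook, each $\lambda_i$ must be a hook, say $\lambda_i=(a_i,1^{b_i})$ with $a_i+b_i=ip^{k-1}$. Setting $r_i:=a_{i+1}-a_i$ and $c_i:=b_{i+1}-b_i$, the skew $\lambda_{i+1}\smallsetminus\lambda_i$ is the disjoint union of a horizontal strip of $r_i$ cells in row~$1$ and a vertical strip of $c_i$ cells in column~$1$, with $r_i+c_i=p^{k-1}$. The key combinatorial input is a direct analysis of the reverse-lattice condition (reading right-to-left, top-to-bottom): the entries of the horizontal strip (weakly increasing left-to-right) must all equal $1$, and the entries of the vertical strip (strictly increasing top-to-bottom) must form a consecutive run $t,t+1,\ldots$ starting from $t\in\{1,2\}$. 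Consequently, $\mu$ is forced to be a hook, each LR coefficient is $0$ or $1$, and writing $\mu=(p^{k-1}-m',1^{m'})$ the possibilities are $c_i\in\{m',m'+1\}$ at every step.

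Summing these constraints, $\sum_{i=1}^{p-1}c_i = b_p-b_1 = j-m'$ must lie in $[(p-1)m', (p-1)(m'+1)]$, which forces $pm'\leq j\leq pm'+p-1$. Since $j=pm+x$ with $0\leq x\leq p-1$ uniquely determines the pair $(m,x)$, this gives $m'=m$ and hence $\mu=\lambda$; no other $\mu$ can contribute, proving the first case.

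When $\mu=\lambda$, a chain is determined by a sequence $(c_1,\ldots,c_{p-1})\in\{m,m+1\}^{p-1}$ with $\sum c_i = (p-1)m+x$, which amounts to choosing which $x$ of the $p-1$ steps satisfy $c_i=m+1$. Since each such chain contributes $d_{\underline{\lambda}}=1$, the total count is $\binom{p-1}{x}$, yielding the claim. The main obstacle is the LR analysis in the second paragraph: once the fillings of hook-shaped skew diagrams are classified, both the vanishing for $\mu\neq\lambda$ and the binomial count follow immediately; a small amount of care will also be needed to handle the extreme cases $m=0$ and $m=p^{k-1}-1$ where one of the two strips is empty.
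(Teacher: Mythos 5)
Your proposal is correct and follows essentially the same route as the paper's proof: reduce via Frobenius reciprocity and Lemma~\ref{lem:dlambda} to counting chains of hooks $\mu=\lambda_1\subseteq\cdots\subseteq\lambda_p=h_j$, observe that the length of the first column grows by either $m'$ or $m'+1$ at each step, and deduce both the vanishing for $\mu\neq\lambda$ (from summing the constraints) and the multiplicity $\binom{p-1}{x}$ (by counting which steps increase by $m+1$). Your spelling-out of the reverse-lattice analysis for a disconnected row-plus-column skew shape — all entries in the row equal $1$, the column forms a consecutive run starting at $1$ or $2$, hence the type is forced to be a hook and each Littlewood--Richardson coefficient is $0$ or $1$ — is exactly the content the paper leaves implicit in the sentence ``$\mu_{i+1}$ is a hook partition obtained from $\mu_i$ by adding $m+\varepsilon_i$ boxes to the first column.'' Your treatment is slightly more careful than the paper's about justifying why that claim holds, which is a welcome addition rather than a divergence.
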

\begin{proof}
In order to ease the notation we let $K=(\fS_{p^{k-1}})^p$.
Let $\mu$ be a partition of $p^{k-1}$ and suppose that 
$(\chi^\mu)^{\otimes p}$ is a constituent of $\chi^{h_j}\downarrow_K$. An easy consequence of the Littlewood-Richardson rule shows that $\mu$ must be a subpartition of $h_j$. Hence $\mu=(p^{k-1}-a,1^a)\in\mathcal{H}(p^{k-1})$, for some $a\in\{0,1,\ldots,p^{k-1}-1\}$. 

It is now convenient to change point of view. 
By Lemma \ref{lem:neq0}, we observe that $\chi^\rho$ is an irreducible constituent of the induction from $K$ to $\fS_{p^k}$ of $(\chi^\mu)^{\otimes p}$ if and only if there exists a sequence $\mu=\mu_1\subseteq\mu_2\subseteq\ldots\subseteq\mu_p=\rho$ of partitions such that $\mu_i\in\mathcal{P}(ip^{k-1})$ for every $i\in\{1,2,\ldots,p-1\}$, and such that there exists a Littlewood-Richardson configuration of type $\mu$ of  $[\mu_{i+1}\smallsetminus\mu_i]$ for every $i\in\{1,2,\ldots,p-1\}$. In this case we say that $(\mu_1,\mu_2,\ldots,\mu_p)$ is a $\mu$-sequence of $\rho$.

In particular if $\rho\in\mathcal{H}(p^k)$ then we notice that $ap+1\leq \ell(\rho)\leq (a+1)p$. 
This can be seen by observing that $\mu$ has $a+1$ parts and that for all $i\in\{1,\ldots,p-1\}$ we have that $\ell(\mu_{i+1})=\ell(\mu_i)+a+\varepsilon_i$ for some $\varepsilon_i\in\{0,1\}$.
This fact is enough to deduce that if $\left\langle \chi^{h_j}\downarrow_{K}, (\chi^\mu)^{ \otimes p}  \right\rangle\neq 0$ then $a=m$ and hence $\mu=\lambda$.

To conclude the proof we first need to recall the notation introduced in Definition \ref{def:Amuro}. Let $\rho\in\mathcal{P}(p^{k})$ and $\mu\in\mathcal{P}(p^{k-1})$. Denote by $\mathcal{A}_\mu^\rho$ the set consisting of all the $\mu$-sequences of $\rho$
(this was previously denoted by $\mathcal{A}_{(\mu,\mu,\ldots,\mu)}^\rho$).
Let $\tau=(\mu_1,\mu_2,\ldots,\mu_p)\in A_\mu^\rho$. We let $d_\tau^\rho$ be the positive integer defined by 
$$d_\tau^\rho=\prod_{i=1}^{p-1}C^{\mu_{i+1}}_{\mu, \mu_i},$$
where $C^{\mu_{i+1}}_{\mu, \mu_i}$ denotes the usual Littlewood-Richardson coefficient. 
From Lemma \ref{lem:dlambda} we see that 
$$
\left\langle \chi^{\rho}\downarrow_{K}, (\chi^\mu)^{ \otimes p}\right\rangle = \sum_{\tau\in A_\mu^\rho}d_\tau^\rho.
$$
In the case where $\rho=h_j$ and $\mu=\lambda$, the above formula is particularly easy to use. 
As we noticed before each partition $\mu_{i+1}$ is a hook partition obtained from $\mu_i$ by adding $m+\varepsilon_i$ boxes to the first column and adding the remaining 
$p^{k-1}-(m+\varepsilon_i)$ boxes to the first row, for some $\varepsilon_i\in\{0,1\}$. Since $\ell(\lambda)=1+pm+x$ we deduce that for exactly $x$ values of $i\in\{1,\ldots, p-1\}$ we have that $\varepsilon_i=1$. This shows that $|A_\lambda^{h_j}|={p-1\choose x}$. 
Let now $\tau=(\mu_1,\ldots,\mu_p)\in A_\lambda^{h_j}$. Then $[\mu_{i+1}\smallsetminus\mu_i]$ is a skew partition of $p^{k-1}$ with one row of length $p^{k-1}-(m+\varepsilon_i)$, and one column of length $m+\varepsilon_i$, for some $\varepsilon_i\in\{0,1\}$. 
Hence $C^{\mu_{i+1}}_{\lambda, \mu_i}=1$ for all $i$, and therefore $d_\tau^{h_j}=1$. 
This shows that $\left\langle \chi^{h_j}\downarrow_{K}, (\chi^\lambda)^{ \otimes p}\right\rangle={p-1\choose x}$, as required. 
\end{proof}

Let $m\in\{0,1,\ldots,p^{k-1}-1\}$. Denote by $A_m$ the subset of $\mathrm{Irr}_{p'}(\fS_{p^{k-1}}\wr \fS_p)$ defined by 
$$A_m=\{ \chi((p^{k-1}-m,1^m),\mu)\, |\, \mu\in\mathcal{H}(p)\}.$$
Similarly let $B_m$ be the subset of $\mathrm{Irr}_{p'}(\fS_{p^k})$ defined by $$B_m=\{\chi^{(p^{k}-(pm+x),1^{pm+x})}\ |\ x\in\{0,1,\ldots,p-1\}\}.$$
Clearly $|A_m|=|B_m|=p$ for all $m\in\{0,1,\ldots,p^{k-1}-1\}$. Moreover we have that 
$$\mathrm{Irr}_{p'}(\fS_{p^{k-1}}\wr \fS_p)=\bigcup_{m=0}^{p^{k-1}-1}A_m,\ \ \text{and}\ \ \ \mathrm{Irr}_{p'}(\fS_{p^k})=\bigcup_{m=0}^{p^{k-1}-1}B_m.$$

\begin{proposition}\label{prop:1}
Let $m,\ell\in\{0,1,\ldots, p^{k-1}-1\}$. Let $\chi\in B_m$ and $\psi\in A_{\ell}$. If 
$\left\langle \psi, \chi\downarrow_{_{\fS_{p^{k-1}}\wr \fS_p}}\right\rangle\neq 0$ then $m=\ell$.
\end{proposition}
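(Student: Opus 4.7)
The plan is to reduce the hypothesis on the constituents of $\chi\downarrow_H$ (where $H = \fS_{p^{k-1}}\wr \fS_p$) to a statement about constituents of $\chi\downarrow_K$ for $K = (\fS_{p^{k-1}})^p$, and then invoke Lemma \ref{lem:1} directly.

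First I would observe that if $\langle \psi, \chi\downarrow_H\rangle \neq 0$, then by transitivity of restriction every irreducible constituent of $\psi\downarrow_K$ must appear as an irreducible constituent of $\chi\downarrow_K$. So the question is to identify the constituents of $\psi\downarrow_K$. By definition, $\psi = \chi(\lambda,\mu) = \widetilde{\chi^\lambda}\cdot\chi^\mu$ where $\lambda = (p^{k-1}-\ell,1^\ell)$ and $\mu \in \mathcal{H}(p)$. Since $\widetilde{\chi^\lambda}$ is an extension of $(\chi^\lambda)^{\otimes p}$ from $K$ to $H$, and $\chi^\mu$ is inflated from $H/K \cong \fS_p$ (hence acts as the scalar $\chi^\mu(1)$ on $K$), we obtain
\[
\psi\downarrow_K \;=\; \chi^\mu(1)\cdot (\chi^\lambda)^{\otimes p}.
\]
In particular, $(\chi^\lambda)^{\otimes p}$ is an irreducible constituent of $\psi\downarrow_K$.

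Next I would combine this with the hypothesis: $(\chi^\lambda)^{\otimes p}$ must appear in $\chi\downarrow_K = \chi^{h_j}\downarrow_K$, where $j = pm+x$ for some $x \in \{0,\ldots,p-1\}$. Now Lemma \ref{lem:1} applies directly: it asserts that the only partition $\nu \in \mathcal{P}(p^{k-1})$ for which $(\chi^\nu)^{\otimes p}$ can be a constituent of $\chi^{h_j}\downarrow_K$ is $\nu = (p^{k-1}-m,1^m)$. Comparing, we must have $\lambda = (p^{k-1}-m,1^m)$, and therefore $\ell = m$, as desired.

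There is no real obstacle here: the statement is essentially a repackaging of Lemma \ref{lem:1} once one unwinds the construction of the $\chi(\lambda,\mu)$ and uses transitivity of restriction. The only point requiring a little care is the identification $\psi\downarrow_K = \chi^\mu(1)(\chi^\lambda)^{\otimes p}$, which follows from the description of $\widetilde{\chi^\lambda}$ in Lemma \ref{wr-ext} together with the fact that $\chi^\mu$, viewed as a character of $H$, is inflated from $H/K$ and thus trivial on $K$ up to its degree.
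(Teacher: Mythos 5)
Your proof is correct and follows essentially the same route as the paper: both reduce to the observation that a constituent of $\chi\downarrow_H$ restricted to $K=(\fS_{p^{k-1}})^p$ must itself appear in $\chi\downarrow_K$, compute $\psi\downarrow_K = \chi^\mu(1)\,(\chi^\lambda)^{\otimes p}$, and then invoke Lemma~\ref{lem:1}. The paper simply phrases this as the inequality $\langle\chi\downarrow_K,(\chi^\lambda)^{\otimes p}\rangle \geq \langle\psi\downarrow_K,(\chi^\lambda)^{\otimes p}\rangle = \chi^\mu(1) > 0$ and derives a contradiction when $\ell \neq m$, whereas you argue directly, but the content is identical.
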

\begin{proof}
Suppose for a contradiction that $\psi\in A_\ell$ for some $\ell\in\{0,1,\ldots, p^{k-1}-1\}\smallsetminus\{m\}$.
Then $\psi=\chi((p^{k-1}-\ell,1^\ell),\mu)$, for some $\mu\in\mathcal{H}(p)$. By Lemma \ref{lem:1} we have that 
$$0=\left\langle \chi\downarrow_{_{(\fS_{p^{k-1}})^p}}, (\chi^{(p^{k-1}-\ell,1^\ell)})^{ \otimes p}\right\rangle\geq \left\langle \psi\downarrow_{_{(\fS_{p^{k-1}})^p}}, (\chi^{(p^{k-1}-\ell,1^\ell)})^{ \otimes p}\right\rangle=\chi^\mu(1).$$
This is a contradiction. 
\end{proof}

\begin{thm}\label{t:1}
Let $m\in\{0,1,\ldots, p^{k-1}-1\}$ and let $\chi\in B_m$. 
Then $\chi\downarrow_{_{\fS_{p^{k-1}}\wr \fS_p}}=\chi^\star+\Delta$, where $\chi^\star\in A_m$ and where $\Delta$ is a sum (possibly empty) of irreducible characters of degree divisible by $p$. Moreover the map $\chi\mapsto\chi^\star$ is a bijection between $B_m$ and $A_m$. 
\end{thm}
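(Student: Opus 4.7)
My plan is to identify the $(\chi^{\lambda_m})^{\otimes p}$-isotypic component of $\chi^{h_j}\downarrow_H$ as an $H$-module by using the exterior-power description of hook characters. Recall that $\chi^{h_j}\cong\bigwedge^j V$, where $V$ is the $(p^k-1)$-dimensional standard representation of $\fS_{p^k}$. The partition of $\{1,\ldots,p^k\}$ into $p$ blocks of size $p^{k-1}$ yields a $K$-module decomposition
\[
V\downarrow_K \;\cong\; V^{(1)}\oplus\cdots\oplus V^{(p)}\oplus U,
\]
where $V^{(i)}$ is the standard representation of the $i$-th factor of $K=(\fS_{p^{k-1}})^p$ (trivial on the other factors), and $U$ is a $(p-1)$-dimensional $K$-trivial space that carries the standard representation $V_{p}$ of $\fS_p$ arising from the permutation of blocks.

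Next I would expand $\bigwedge^j V\downarrow_K$ via the binomial decomposition of the exterior power of a direct sum, and read off the $(\chi^{\lambda_m})^{\otimes p}$-isotypic. Since $\bigwedge^a V^{(i)}\cong\chi^{\lambda_a}=\chi^{(p^{k-1}-a,1^a)}$ in the $i$-th factor, the only tuple $(a_1,\ldots,a_p)$ producing $(\chi^{\lambda_m})^{\otimes p}$ is the constant tuple $a_i=m$, which forces $b=j-pm=x$. The isotypic is therefore the single summand $(\chi^{\lambda_m})^{\otimes p}\otimes\bigwedge^x U$, of $K$-dimension $\chi^{\lambda_m}(1)^p\binom{p-1}{x}$ (consistent with Lemma~\ref{lem:1}), and it is manifestly $\fS_p$-stable. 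On this summand, $\fS_p$ permutes the tensor factors of $(\chi^{\lambda_m})^{\otimes p}$---which is exactly the explicit model of the canonical extension $\tilde{\chi^{\lambda_m}}$ given in the proof of Lemma~\ref{wr-ext}---and acts on $\bigwedge^x U\cong\bigwedge^x V_p\cong V_{\fS_p}^{(p-x,1^x)}$, the hook Specht module of $\fS_p$. By Gallagher's theorem, the whole isotypic is then the irreducible $H$-module $\tilde{\chi^{\lambda_m}}\cdot\chi^{(p-x,1^x)}=\chi(\lambda_m,(p-x,1^x))\in A_m$, occurring with multiplicity one in $\chi^{h_j}\downarrow_H$.

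To conclude, I would combine this with Proposition~\ref{prop:1}: every $p'$-degree constituent of $\chi^{h_j}\downarrow_H$ lies in $A_m$ and hence over the diagonal $(\chi^{\lambda_m})^{\otimes p}$, so $\chi(\lambda_m,(p-x,1^x))$ is the only such constituent. Any remaining constituent lies over a non-diagonal $\fS_p$-orbit in $\mathrm{Irr}(K)$; since any proper stabilizer in $\fS_p$ contains no $p$-cycle (as $p$ is prime) and hence has index divisible by $p$, such constituents have degree divisible by $p$. This yields $\chi^{h_j}\downarrow_H=\chi(\lambda_m,(p-x,1^x))+\Delta_j$ with $\Delta_j$ a (possibly empty) sum of irreducibles of degree divisible by $p$, and the bijection $B_m\to A_m$ follows at once, since distinct $x\in\{0,\ldots,p-1\}$ give distinct hook partitions $(p-x,1^x)$. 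The main technical obstacle is the verification that the $\fS_p$-action on the isotypic summand (permuting the tensor factors of $(\chi^{\lambda_m})^{\otimes p}$) is precisely the canonical extension $\tilde{\chi^{\lambda_m}}$ singled out by Lemma~\ref{wr-ext}: this reduces to a trace-on-transposition computation matched against the determinantal normalization in that lemma.
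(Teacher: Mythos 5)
Your argument is correct in outline and takes a genuinely different route from the paper. The paper's proof of Theorem~\ref{t:1} is combinatorial: it combines the Littlewood--Richardson computation of Lemma~\ref{lem:1} and the block restriction of Proposition~\ref{prop:1} with a reverse induction on $x \in \{0,\ldots,(p-1)/2\}$, using the numerical inequality from Lemma~\ref{lem:1} at each step to pin down where the $p'$-degree constituent of $\psi\uparrow_K^{\fS_{p^k}}$ can sit. Your proof instead realizes $\chi^{h_j}$ as $\bigwedge^j V$ and expands the exterior power of the $K$-module decomposition $V\downarrow_K = V^{(1)}\oplus\cdots\oplus V^{(p)}\oplus U$. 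This absorbs both Lemma~\ref{lem:1} and Proposition~\ref{prop:1} into a single structural computation (the constant tuple $a_i=m$ is forced, and non-diagonal $\fS_p$-orbits give $p$-divisible stabilizer index), and then directly identifies the $(\chi^{\lambda_m})^{\otimes p}$-isotypic summand as a single irreducible $H$-module in $A_m$, bypassing the reverse induction entirely. What the paper's approach buys is robustness and a lighter reliance on the module theory of hooks; what your approach buys is a genuinely explicit $H$-module model of $\chi^\star$, and it in principle opens a faster path to the precise labelling in Corollary~\ref{c:3precise} once the sign bookkeeping is done.

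One point to make precise: your sentence that the $\fS_p$-action permuting the tensor factors of $(\chi^{\lambda_m})^{\otimes p}$ ``is exactly the explicit model of the canonical extension $\tilde{\chi^{\lambda_m}}$'' elides two sources of a possible sign twist. First, the inclusion $\bigwedge^m V^{(1)}\otimes\cdots\otimes\bigwedge^m V^{(p)}\otimes\bigwedge^x U \hookrightarrow \bigwedge^j V$ intertwines the permutation action with a block-reordering sign $(\operatorname{sgn}\sigma)^{m}$, which is nontrivial when $m$ is odd. Second, the explicit permutation module $V_1$ constructed in the proof of Lemma~\ref{wr-ext} may differ from the canonical extension of the statement by a tensor with $\operatorname{sgn}$, depending on $\det\theta_1|_{\fS_p}$. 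So the isotypic is $\tilde{\chi^{\lambda_m}}\cdot\chi^{\nu}$ for a hook $\nu \in \{(p-x,1^x),\,(x+1,1^{p-1-x})\}$, not automatically $(p-x,1^x)$. You flag this at the end as the main technical obstacle, and you are right to; but it is worth noting that it is not an obstacle to Theorem~\ref{t:1} itself, since a global sign (depending on $m$ but not on $x$) just post-composes the map with $\nu\mapsto\nu'$, which is still a bijection of $\mathcal{H}(p)$. The paper's Theorem~\ref{t:main correspondence p^k} leaves the same two-fold ambiguity at this stage, resolving it only for $p=3$ in Corollary~\ref{c:3precise}.
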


\begin{proof}
For $x\in\{0,1,\ldots, \frac{p-1}{2}\}$ let $A_m^x$ be the subset of $A_m$ defined by $$A_m^x=\{\chi((p^{k-1}-m,1^m),(p-y,1^y))\ \color{black} |\ y\in\{x, (p-1)-x\}\}.$$
Similarly let $B_m^x$ be the subset of $B_m$ defined by $$B_m^x=\{\chi^{(p^{k}-(mp+y),1^{mp+y})}\ |\ y\in\{x, (p-1)-x\}\}.$$
We will show by reverse induction on $x\in\{0,1,\ldots, \frac{p-1}{2}\}$ that the following statement holds. 

\noindent \textbf{Claim.} \textit{Let $\chi\in B_m^x$. Then $\chi\downarrow_{_{\fS_{p^{k-1}}\wr \fS_p}}=\chi^\star+\Delta$, where $\chi^\star\in A_m^x$ and where $\Delta$ is a sum (possibly empty) of irreducible characters of degree divisible by $p$. Moreover the map $\chi\mapsto\chi^\star$ is a bijection between $B_m^x$ and $A_m^x$.}

\medskip

\textbf{Base Step.} Let $\lambda=(p^{k-1}-m,1^m)\in\mathcal{H}(p^{k-1})$ and $\mu=(\frac{p+1}{2},1^{\frac{p-1}{2}})\in\mathcal{H}(p)$. Let $\psi:= \chi(\lambda, \mu)$ be the unique element of $A_m^{\frac{p-1}{2}}$.
To shorten the notation, let $K=\fS_{p^{k-1}}\wr \fS_p$.
Since $\psi(1)$ and $|\fS_{p^k}:K|$ are both coprime to $p$, there exist $\chi\in\mathrm{Irr}_{p'}(\fS_{p^k})$ such that 
$\chi$ is a constituent of $\psi\uparrow_{K}^{\fS_{p^k}}$. By Proposition \ref{prop:1} we deduce that $\chi\in B_m$.
Hence there exists $y\in\{0,1,\ldots, p-1\}$ such that $\chi=\chi^{(p^k-(pm+y),1^{pm+y})}$.
By Lemma \ref{lem:1} we obtain that 
$${p-1\choose y}=\left\langle \chi\downarrow_{_{(\fS_{p^{k-1}})^ p}}, (\chi^{\lambda})^{ \otimes p}\right\rangle\geq \left\langle \psi\downarrow_{_{(\fS_{p^{k-1}})^p}}, (\chi^{\lambda})^{ \otimes p}\right\rangle=\chi^\mu(1)={p-1\choose \frac{p-1}{2}}.$$
This clearly implies that $y=\frac{p-1}{2}$. Therefore we have that $B_m^{\frac{p-1}{2}}=\{\chi\}$ and that 
\begin{eqnarray*}
\left\langle \chi\downarrow_{K}, \phi\right\rangle=
\begin{cases}
1 &\text{ if } \phi=\psi\,,\\
0 &\text{ if } \phi\in\mathrm{Irr}_{p'}(K)\smallsetminus\{\psi\}\,.\\
\end{cases}
\end{eqnarray*}
This shows that $\chi^\star=\psi$, and proves the claim for $x=\frac{p-1}{2}$.

\smallskip

\textbf{Inductive Step.} Let $x\in\{0,1,\ldots, \frac{p-1}{2}-1\}$ and assume that the claim holds for all $x<y\leq\frac{p-1}{2}$. 
Clearly $|A_m^x|=|B_m^x|=2$. Let $\psi\in A_m^x=\{\psi, \psi_2\}$. Since $\psi(1)$ and $|\fS_{p^k}:K|$ are both coprime to $p$, there exist $\chi\in\mathrm{Irr}_{p'}(\fS_{p^k})$ such that 
$\chi$ is a constituent of $\psi\uparrow_{K}^{\fS_{p^k}}$. By Proposition \ref{prop:1} we deduce that $\chi\in B_m$.
By induction we deduce that $$\chi\in B_m\smallsetminus \big(\bigcup_{y=x+1}^{\frac{p-1}{2}}B_m^y\big)=\bigcup_{j=0}^{x}B_m^j.$$
Let $j\in\{0,1,\ldots, x\}$ be such that $\chi\in B_m^j$. By Lemma \ref{lem:1} we obtain that 
$${p-1\choose j}=\left\langle \chi\downarrow_{_{(\fS_{p^{k-1}})^p}}, (\chi^{\lambda})^{ \otimes p}\right\rangle\geq \left\langle \psi\downarrow_{_{(\fS_{p^{k-1}})^p}}, (\chi^{\lambda})^{ \otimes p}\right\rangle=\chi^\mu(1)={p-1\choose x}.$$
This necessarily implies that $j=x$. Therefore we have that $B_m^{x}=\{\chi, \chi_2\}$ and that 
\begin{eqnarray*}
\left\langle \chi\downarrow_{K}, \phi\right\rangle=
\begin{cases}
1 &\text{ if } \phi=\psi\,,\\
0 &\text{ if } \phi\in\mathrm{Irr}_{p'}(K)\smallsetminus\{\psi\}\,.\\
\end{cases}
\end{eqnarray*}
This shows that $\chi^\star=\psi$. 
We conclude by showing that $(\chi_2)^\star=\psi_2$.
Let $\chi'$ be a $p'$-degree constituent of $(\psi_2)\uparrow_K^{\fS_{p^k}}$. Using the inductive hypothesis and Lemma \ref{lem:1} exactly as above, we deduce that $\chi'\in B_m^x=\{\chi, \chi_2\}$. This immediately implies that $\chi'=\chi_2$. It follows by Lemma \ref{lem:1} that 
$\psi_2$ is the only constituent of degree coprime to $p$ of $\chi_2\downarrow_K$ and also that $\left\langle\chi_2\downarrow_K, \psi_2\right\rangle=1$. Hence
$(\chi_2)^\star=\psi_2$.
%

Therefore we have that $\chi\mapsto\chi^\star$ is a bijection between $B_m^x$ and $A_m^x$, as required. 
Taking the union of these sets over $x\in\{0,1,\ldots,\frac{p-1}{2}\}$ we obtain the statement of the theorem.
\end{proof}

As a straightforward consequence of Theorem \ref{t:1} we obtain the following result, which is a slightly stronger statement than Theorem B, as presented in the introduction. 

\begin{thm}\label{t:main correspondence p^k}
Let $K:=\fS_{p^{k-1}}\wr \fS_p$.
Let $\chi\in \mathrm{Irr}_{p'}(\fS_{p^k})$. The restriction $\chi\downarrow_K$ has a unique irreducible constituent $\chi^\star$ lying in $\mathrm{Irr}_{p'}(K)$, appearing with multiplicity 1. Moreover the map $\chi\mapsto\chi^\star$ is a bijection between $\mathrm{Irr}_{p'}(\fS_{p^k})$ and $\mathrm{Irr}_{p'}(K)$. 

More precisely if $\lambda=(p^k-(mp+x), 1^{mp+x}),$
for some $x\in\{0,1,\ldots, p-1\}$ and $\chi=\chi^\lambda$, then 
$\chi^*\in\{ \chi(\mu,\nu_1), \chi(\mu,\nu_2) \}$, where 
$$\mu=(p^{k-1}-m,1^m)\ \ , \ \nu_1=(p-x,1^x)\ \ \text{and}\ \ \nu_2=(x+1,1^{p-1-x}).$$
\end{thm}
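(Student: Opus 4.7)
The plan is to derive this theorem directly from Theorem~\ref{t:1} by assembling the block-wise bijections into a single map. First I would recall the disjoint partitions
$$\mathrm{Irr}_{p'}(\fS_{p^k}) = \bigsqcup_{m=0}^{p^{k-1}-1} B_m, \qquad \mathrm{Irr}_{p'}(K) = \bigsqcup_{m=0}^{p^{k-1}-1} A_m,$$
introduced just before Theorem~\ref{t:1}, each block of cardinality $p$. Theorem~\ref{t:1} supplies, for each $m$, a bijection $B_m \to A_m$, $\chi \mapsto \chi^\star$, together with the crucial structural fact that $\chi \downarrow_K = \chi^\star + \Delta$, where $\Delta$ is a (possibly empty) sum of irreducible characters of $K$ whose degrees are all divisible by $p$.

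From this the first assertion is immediate. Since every constituent of $\Delta$ has $p$-divisible degree, $\chi^\star$ is the unique $p'$-degree irreducible constituent of $\chi \downarrow_K$, and it appears with multiplicity one. Taking the disjoint union of the block-wise bijections over $m \in \{0,1,\ldots,p^{k-1}-1\}$ then yields a single bijection $\mathrm{Irr}_{p'}(\fS_{p^k}) \to \mathrm{Irr}_{p'}(K)$ with the characterisation in the statement.

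For the refined description of $\chi^\star$, I would keep track of the finer partitions $B_m = \bigsqcup_x B_m^x$ and $A_m = \bigsqcup_x A_m^x$ that were used throughout the proof of Theorem~\ref{t:1}. If $\chi = \chi^\lambda$ with $\lambda = (p^k - (mp+x), 1^{mp+x})$, then by construction $\chi$ lies in $B_m^x$. The reverse induction on $x \in \{0,1,\ldots,(p-1)/2\}$ carried out in Theorem~\ref{t:1} shows that the block-wise bijection further restricts to a bijection $B_m^x \to A_m^x$. Unpacking the definition,
$$A_m^x = \{\chi(\mu,(p-x,1^x)),\; \chi(\mu,(x+1,1^{p-1-x}))\} = \{\chi(\mu,\nu_1),\; \chi(\mu,\nu_2)\},$$
with $\mu = (p^{k-1}-m,1^m)$, so $\chi^\star$ must lie in $\{\chi(\mu,\nu_1),\chi(\mu,\nu_2)\}$, as desired.

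Since all the substantive arguments have already been absorbed into Theorem~\ref{t:1} (and, upstream, into Proposition~\ref{prop:1} and Lemma~\ref{lem:1}), the only remaining work is this bookkeeping. I do not anticipate any real obstacle beyond checking that the sets $B_m^x$ and $A_m^x$ used inside the proof of Theorem~\ref{t:1} coincide with the pair $\{\chi(\mu,\nu_1),\chi(\mu,\nu_2)\}$ appearing in the present statement, which is immediate from the definitions.
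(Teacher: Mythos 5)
Your proposal is correct and takes exactly the approach the paper intends: the paper states Theorem~\ref{t:main correspondence p^k} as ``a straightforward consequence of Theorem~\ref{t:1}'' without further argument, and your write-up simply makes that deduction explicit by assembling the block-wise bijections $B_m \to A_m$ (and, for the refined part, $B_m^x \to A_m^x$) into a single map. The only microscopic slip is notational: when $x > (p-1)/2$ the set $B_m^x$ as defined in the proof of Theorem~\ref{t:1} is indexed by $(p-1)-x$, but since $\{x,(p-1)-x\}$ is symmetric this does not affect the conclusion that $\chi^\star \in \{\chi(\mu,\nu_1),\chi(\mu,\nu_2)\}$.
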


In Section \ref{sec:4} below we will be able to give an ultimate complete description of the bijection given in Theorem \ref{t:main correspondence p^k} for the prime $p=3$. In particular in Corollary \ref{c:3precise} we will completely identify the character $\chi^*$.

\subsection{A local bijection}

We fix the prime $p=3$. 
Let $k\geq 1$, and suppose that $P_k$ is a Sylow $3$-subgroup
of the symmetric group $\fS_{3^k}$.  
Our aim in this section is to define a natural bijection of characters

$$
\Phi_{3^k}\colon \mathrm{Irr}_{3'}(\fS_{3^k})\longrightarrow \mathrm{Irr}_{3'}(\norm{\fS_{3^k}}{P_k})\, ,
$$ such that $\Phi_{3^k}(\chi)$ is an irreducible constituent of $\chi\downarrow_{\norm{\fS_{3^k}}{P_k}}$,
for each $\chi\in\mathrm{Irr}_{3'}(\fS_{3^k})$.

It is easy to check that $P_k$ induces on $\{1,\ldots,3^k\}$
a unique maximal block system consisting of three blocks of size $3^{k-1}$ each, 
and we denote by $H=\fS_{3^{k-1}}\wr \fS_3\leq \fS_{3^k}$ the stabilizer of the corresponding set partition.
(Recall that by Lemma \ref{complement} the complements $\fS_3$ of the base group
inducing the wretah product $H$ are all conjugate.) It is clear that $P_k\cap (\fS_{3^{k-1}})^3=(P_{k-1})^3$, 
and if $\langle z\rangle$
denotes the unique Sylow $3$-subgroup of $\fS_3$, then $P_k=(P_{k-1})^3\rtimes\langle z \rangle=:P_{k-1}\wr\langle z \rangle$. 
\color{black}
It is not difficult to show (see the proof of Lemma 4.2 in \cite{Olsson}) 
that $\norm {\fS_{3^k}}{P_k}\leq H$, 
so in particular $\norm {\fS_{3^k}}{P_k}=\norm {H}{P_k}$. 

In order to prove our main results in this section, we need first to recall some known facts on the 
structure of
the wreath product $H=\fS_{3^{k-1}}\wr \fS_3$, 
which are contained in Proposition 1.5 of \cite{Olsson} and its proof (see also Lemma \ref{norm} below). 
Let 
$$
M=\{ (x_1, x_2, x_3) \  |\ x_i\in\norm{\fS_{3^{k-1}}}{P_{k-1}}, \ x_i\equiv x_j\pmod {P_{k-1}} \mbox{\ for\ all\ } i,j \}\, .
$$ Then we have 
$\norm{H}{P_{k}}=M\fS_3= M\rtimes\fS_3\, .$ In particular, it is clear that
$$
\norm{H}{P_k}\leq \norm{\fS_{3^{k-1}}}{P_{k-1}}\wr \fS_3\, .
$$The elements in the derived subgroup of $P_k$ are precisely
$$
P_k'=\{ (x_1,x_2, x_3; 1)\, |\, x_i\in P_{k-1} \mbox{\ and\ } x_1x_2x_3\in P_{k-1}'\}\, ,
$$ so in particular  $P_k'\leq M$. Of course, note that $P_k'$ is normal in $\norm H P$. Furthermore, we have that
\begin{equation}\label{dprod}
\norm H {P_k}/{P_k}' \cong M/P_k' \times {\fS}_{3},
\end{equation} in a natural way. 

The following result contains a key step in the construction of the map $\Phi_{3^k}$ described 
above. 

\begin{proposition}\label{bij_wreath}
There exists a natural bijection between the sets $\mathrm{Irr}_{3'}(\norm{\fS_{3^{k-1}}}{P_{k-1}}\wr \fS_3)$ and $\mathrm{Irr}_{3'}(\norm{H}{P_k})$,
which is compatible with character restriction. 
\end{proposition}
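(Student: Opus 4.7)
The plan is to apply Isaacs' Proposition \ref{isa_lemma}(i) to the triple $(G, N^3, K)$, where $N := \norm{\fS_{3^{k-1}}}{P_{k-1}}$, $G := N \wr \fS_3 = N^3 \rtimes \fS_3$, and $K := \norm{H}{P_k} = M \rtimes \fS_3$. One checks directly that $G = KN^3$ and $K \cap N^3 = M$, and that $[N^3:M] = [N:P_{k-1}]^2$ is coprime to $3$, since $P_{k-1} \in \syl{3}{N}$. A character $\theta_1 \otimes \theta_2 \otimes \theta_3 \in \irr{N^3}$ is $G$-invariant if and only if $\theta_1 = \theta_2 = \theta_3$, and every other $\fS_3$-orbit on $\irr{N^3}$ has length $3$ or $6$. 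Hence each $\chi \in \mathrm{Irr}_{3'}(G)$ lies above a $G$-invariant character $\theta = \theta_1^{\otimes 3}$ with $\theta_1 \in \mathrm{Irr}_{3'}(N)$, and by Lemma \ref{wr-ext} together with Gallagher's theorem $|\mathrm{Irr}_{3'}(G \mid \theta)| = |\irr{\fS_3}| = 3$.

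The technical core is the following claim: for every $\theta_1 \in \mathrm{Irr}_{3'}(N)$, the restriction $(\theta_1^{\otimes 3})|_M$ is irreducible, and the assignment $\theta_1 \mapsto (\theta_1^{\otimes 3})|_M$ is injective. To prove this, I would use that $P_{k-1}$ is a normal Sylow $3$-subgroup of $N$, so by Schur-Zassenhaus $N = P_{k-1} \rtimes C$ for some $3'$-complement $C$; consequently $M = P_{k-1}^3 \rtimes \Delta C$, with $\Delta C$ the diagonal in $C^3$. Using the Clifford decomposition $\theta_1 = \Ind_T^N \xi$, where $\mu$ is a linear constituent of $\theta_1|_{P_{k-1}}$, $T = P_{k-1} \rtimes C_\mu$ is its inertia subgroup in $N$, and $\xi \in \irr{T \mid \mu}$, one obtains $\theta_1^{\otimes 3} = \Ind_{T^3}^{N^3}(\xi^{\otimes 3})$; a Mackey decomposition of its restriction to $M$, together with the identification $M \cap T^3 = P_{k-1}^3 \rtimes \Delta C_\mu$ and a careful analysis of the double cosets $M \backslash N^3 / T^3$, should yield a single irreducible summand. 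Injectivity then follows by recovering both $\mu$ (up to $N$-orbit) and $\xi$ from the restricted character.

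Granted the claim, Proposition \ref{isa_lemma}(i) gives, for each $\theta_1 \in \mathrm{Irr}_{3'}(N)$, a restriction bijection $\irr{G \mid \theta_1^{\otimes 3}} \to \irr{K \mid (\theta_1^{\otimes 3})|_M}$, both sides consisting of $3$ characters of $3'$-degree. Taking the disjoint union over $\theta_1$ gives an injection $\mathrm{Irr}_{3'}(G) \hookrightarrow \mathrm{Irr}_{3'}(K)$ compatible with character restriction. For surjectivity, any $\chi' \in \mathrm{Irr}_{3'}(K)$ lies above some $\varphi \in \mathrm{Irr}_{3'}(M)$; the induced character $\varphi^{N^3}$ has $3'$-degree and is invariant under the cyclic subgroup of order $3$ in $\fS_3$, so its decomposition into orbit-sums in $\irr{N^3}$ under this subgroup has all non-trivial contributions divisible by $3$. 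Hence $\varphi^{N^3}$ contains at least one $G$-invariant constituent $\theta_1^{\otimes 3}$, and Frobenius reciprocity combined with the irreducibility claim forces $\varphi = (\theta_1^{\otimes 3})|_M$; thus $\chi'$ lies in the image of the bijection associated to $\theta_1$, and the argument simultaneously rules out the existence of length-$2$ $\fS_3$-orbits on $\mathrm{Irr}_{3'}(M)$.

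The principal obstacle is the key claim. The irreducibility is more subtle than it would be for a subgroup respecting the direct-product decomposition of $N^3$, because $M$ is a diagonal preimage rather than a direct factor; and the injectivity requires ruling out Gallagher-type coincidences arising from linear characters of $N$ of order dividing $3$, which is why the precise structure $M = P_{k-1}^3 \rtimes \Delta C$ is crucial.
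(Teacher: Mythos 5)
The central claim of your proposal is false. You assert that for every $\theta_1 \in \Irr_{3'}(N)$ (where $N := \norm{\fS_{3^{k-1}}}{P_{k-1}}$), the restriction $(\theta_1^{\otimes 3})|_M$ is irreducible. Take $k=2$, so $N = \fS_3$, $P_{k-1} = C_3$, and let $\theta_1$ be the degree-$2$ character of $\fS_3$. Then $M = C_3^3 \rtimes \Delta C_2$ has order $54$, and since $(t,t,t)$ acts on $C_3^3$ by componentwise inversion, $M' = C_3^3$; every irreducible character of $M$ therefore has degree $1$ or $2$. But $\theta_1^{\otimes 3}$ has degree $8$, so $(\theta_1^{\otimes 3})|_M$ decomposes into at least four irreducible summands — it is not irreducible. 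Because both your injectivity argument and your surjectivity argument invoke this irreducibility, and the application of Isaacs' Proposition \ref{isa_lemma}(i) to the triple $(G, N^3, K)$ depends on it as a hypothesis, the entire construction collapses. Your suspicion that the claim is the ``principal obstacle'' was well founded, but the obstacle is fatal rather than merely technical: for nonlinear $\theta_1$ the degrees already rule it out.

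The paper sidesteps this by attaching to each $\lambda^{\otimes 3}$ a character $\psi_\lambda$ of $M$ that is \emph{not} a restriction and has degree $\lambda(1)$ rather than $\lambda(1)^3$. Concretely: choose a linear constituent $\alpha$ of $\lambda|_{P_{k-1}}$, let $I$ be its inertia subgroup in $N$, let $\hat\alpha \in \irr{I}$ be the Clifford correspondent of $\lambda$ (an extension of $\alpha$, since $N/P_{k-1}$ is abelian), set $\varphi = \hat\alpha^{\otimes 3}$, $J = I^3$, $L = M \cap J$, and finally $\psi_\lambda := (\varphi|_L)^M$, which is irreducible by Clifford correspondence because $L$ is the inertia group in $M$ of $\eta = \alpha^{\otimes 3}$. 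Then, instead of a single application of Proposition \ref{isa_lemma}(i) with $(G,N^3,K)$, the paper interposes the subgroup $JM \rtimes \fS_3$ and applies both parts of Proposition \ref{isa_lemma}: part (\ref{restriction}) to restrict from $JM\rtimes\fS_3$ down to $\norm{H}{P_k}$ (using that $\hat\psi := \varphi^{JM}$ restricts irreducibly to $\psi_\lambda$ on $M$), and part (\ref{induction}) to induce from $JM\rtimes\fS_3$ up to $N\wr\fS_3$ (using that $\hat\psi$ induces irreducibly to $\lambda^{\otimes 3}$). This two-sided maneuver is precisely what makes the character-triple isomorphism go through where the naive restriction fails.
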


\begin{proof}
Observe that the case $k=1$ is trivial, because $\fS_3$ has a normal Sylow $3$-subgroup, so assume that 
$k\geq 2$.  We divide the proof of the result into several steps.


\begin{Step}\label{map}
{ There exists a natural map $\lambda^{\otimes 3}\mapsto \psi_\lambda$
from the set of 
$3'$-degree $\fS_3$-invariant irreducible characters of $\norm{\fS_{3^{k-1}}}{P_{k-1}}^3$, into
the set of irreducible characters of $3'$-degree of $M$ whose kernel contains $P_{k}'$.}
\end{Step}

\begin{proof}[Proof of Step \ref{map}. ]
Note that any $3'$-degree irreducible character of $\norm{\fS_{3^{k-1}}}{P_{k-1}}^3$
that is $\fS_3$-invariant is of the form $\lambda^{\otimes 3}$, where $\lambda\in\mathrm{Irr}_{3'}(\norm{\fS_{3^{k-1}}}{P_{k-1}})$.
So assume $\lambda\in\mathrm{Irr}_{3'}(\norm{\fS_{3^{k-1}}}{P_{k-1}})$, and
let $\alpha\in\irr{P_{k-1}}$ be an irreducible constituent of $\lambda_{P_{k-1}}$. 
In particular, note that $\alpha$ is linear. Of course, by Clifford's Theorem 6.2 of \cite{isaacs}, $\alpha$ is unique up to 
$\norm{\fS_{3^{k-1}}}{P_{k-1}}$-conjugacy. Let  $\eta=\alpha^{\otimes 3} \in\irr{(P_{k-1})^{3}}$. 
If $I$ denotes
the stabilizer of $\alpha$ in $\norm{\fS_{3^{n-1}}}{P_{k-1}}$, 
then $J=I^3$ is the stabilizer of $\eta$
in $\norm{\fS_{3^{k-1}}}{P_{k-1}}^3$.
By Clifford's correspondence, let $\hat \alpha\in\irr {I\, |\, \lambda}$ be uniquely determined by 
$\hat\alpha\uparrow^{\norm {\fS_{3^{k-1}}}{P_{k-1}}}=\lambda$. 
Then $\varphi=\hat\alpha^{\otimes 3}\in\irr{J}$ lies over $\eta$ and induces to
$\lambda^{\otimes 3}$, again by Clifford's correspondence. Also, note that $\varphi$ is an extension of $\eta$
by Gallagher's theorem  \cite[Corollary 6.17]{isaacs}, 
because $\alpha$ extends to $I$ by Theorem \ref{canonical_ext}, and $\norm{\fS_{3^{k-1}}}{P_{k-1}}/P_{k-1}$ is abelian (see Lemma 3.3 of \cite{MO}, for instance). Let $L=M\cap J$, so $\varphi\downarrow_L\in\irr{L\, |\, \eta}$. 
Now, by Clifford's correspondence, we have that $\psi=(\varphi\downarrow_L){\big\uparrow}^M\in\irrq{M}{3}$.

It is not difficult to check, by using the uniqueness in Clifford's correspondence,
that the definition of the character $\psi$ in the previous paragraph depends only on $\lambda$, 
which is the same to say that it is independent of the choice
of $\alpha$. 
Thus, we may write $\psi=\psi_\lambda$.

In order to finish the proof of this step, we need to show that $P'\leq \ker{\psi_\lambda}$, 
for any $\lambda\in\irrq{\norm{\fS_{3^{k-1}}}{P_{k-1}}}{3}$. 
First, note that
$\psi_\lambda$ is 
${\fS}_3$-invariant. In particular, since $\fS_3$ has a cyclic
Sylow $3$-subgroup
we deduce that $\psi_\lambda$ extends to $MP_{k}$.
Now, by elementary character theory
$P'$ is contained in the kernel of any extension of  $\psi_\lambda$ to $MP$, and this completes the proof of this step.
\end{proof}

\begin{Step}\label{map2}
{The map defined in Step \ref{map} is bijective.}
\end{Step}

\begin{proof}[Proof of Step \ref{map2}. ]
First we show that the map in Step \ref{map} is surjective. Let $\psi\in\irrq{M}{3}$
with $P'\leq \ker \psi$. It is clear from (\ref{dprod}) that
$\psi$ is $\fS_3$-invariant, and in particular it is fixed by the Sylow $3$-subgroup
$\langle z \rangle$ of ${\fS}_3$. Since $\psi(1)$ is coprime to $3$, 
an easy argument on character degrees yields that $\psi$ lies over a 
$3'$-degree irreducible character of $(P_{k-1})^3$ which is $\langle z \rangle$-invariant, that is $\psi$ lies
over 
a character of the form $\eta=\alpha^{\otimes 3}\in\irr{(P_{k-1})^3}$, 
where $\alpha\in\irr{P_{k-1}}$ is linear. 
Let $I$ be the stabilizer of $\alpha$ in $\norm{\fS_{3^{k-1}}}{P_{k-1}}$, and write $J=I^3$ and $L=M\cap J$. 
By Clifford's correspondence, let $\gamma\in\irr{L\, |\, \eta}$ such that $\gamma\uparrow^M=\psi$. 
Since $(P_{k-1})^3$ has index coprime to $3$ in $J$, by Gallagher's theorem \cite[Corollary 6.17]{isaacs}
we have that $\gamma=\tau\cdot\delta$, where $\tau\in\irr{L}$ is the canonical extension
of $\eta$ (in the sense of Theorem \ref{canonical_ext}) and $\delta\in\irr{L/K}$.  
Now let $\hat\tau\in\irr{J}$ be the canonical extension of $\eta$
to $J$, so $\hat \tau$ restricts to $\tau$ by the properties of the canonical extension. 
Note that $L$ is $\langle z \rangle$-invariant. 
Also, note that $\delta$ is $\langle  z \rangle$-fixed.
Since $J/K$ is a abelian, it is clear that 
$\delta$ extends to $J$, and since 
$J/K$ is a $2$-group (see Lemma 3.3 of \cite{MO}),  we deduce by coprime action that there 
exists a $\langle z \rangle$-invariant extension $\hat\delta$ of $\delta$ to $J$.
Then
$\varphi=\hat \tau\cdot\hat \delta$ is a $\langle z \rangle$-invariant extension of $\eta$
to $J$. It is clear that $\varphi=\hat\alpha^{\otimes 3}$, for some
extension $\hat\alpha$ of $\alpha$ to $I$. Let $\lambda={\hat\alpha}\uparrow^{\norm{\fS_{3^{k-1}}}{P_{k-1}}}$, which
of course is an irreducible character of degree coprime to $3$. By construction we have that $\psi=\psi_{\lambda}$, so the map
in the previous step is surjective, as wanted. 
Similar considerations lead to the conclusion that the map in Step \ref{map} is also injective, 
which completes the proof of this step.

\end{proof}

\begin{Step}\label{bijection}
{If $\lambda^{\otimes 3}\mapsto \psi_\lambda$ under the map defined in Step \ref{map}, then
there exists a natural bijection

$$
f_\lambda\colon \irrq{\norm{\fS_{3^{k-1}}}{P_{k-1}}\wr \fS_3 \, |\, \lambda^{\otimes 3}}{3}\longrightarrow \irrq{\norm{H}{P}\, |\, \psi_\lambda}{3}
$$ which is compatible with character restriction. 
}\end{Step}

\begin{proof}[Proof of Step \ref{bijection}. ]

As before, write $\psi=\psi_\lambda$, and
let $\hat\psi=\varphi\uparrow^{JM}$.
Recall that $\psi=\hat\psi\downarrow_M$, and that since $\varphi$ is invariant under $\fS_3$, it follows that 
$\hat\psi$ is also $\fS_3$-invariant. 
Now, by Proposition \ref{isa_lemma}(\ref{restriction}) restriction defines a bijection from $\irr{JM\rtimes{\fS_{3}} \, |\, \hat\psi}$
into $\irr{\norm H P\, |\, \psi}$. Observe that
$\hat\psi$ induces irreducibly to $\lambda^{\otimes 3}$ by Clifford's correspondence, and thus
by Proposition \ref{isa_lemma}(\ref{induction}) induction defines a bijection from
$\irr{JM\rtimes{\fS_{3}} \, |\, \hat\psi}$ 
into $\irr{\norm{\fS_{3^{k-1}}}{P_{k-1}}\wr {\fS}_3\, |\, \lambda^{\otimes 3}}$. 
By composing the inverse of the latter bijection with the former one, and then restricting the resulting map to the set of irreducible characters of degree coprime to $3$, one obtains a correspondence 

$$
f_\lambda\colon \irrq{\norm{\fS_{3^{k-1}}}{P_{k-1}}\wr \fS_3 \, |\, \lambda^{\otimes 3}}{3}\longrightarrow \irrq{\norm{H}{P}\, |\, \psi_\lambda}{3}
$$ as desired. 

\end{proof}

\begin{Step}\label{final}
{ The union $f$ of the maps $f_\lambda$ defined in Step $2$, with $\lambda$ running over 
$\irrq{\norm{\fS_{3^{k-1}}}{P_{k-1}}}{3}$, is a natural bijection

$$f\colon\mathrm{Irr}_{3'}(\norm{\fS_{3^{k-1}}}{P_{k-1}}\wr \fS_3)\longrightarrow \mathrm{Irr}_{3'}(\norm{H}{P})$$
which is compatible with character restriction. }
\end{Step}

\begin{proof}[Proof of Step \ref{final}. ]

Let $\xi\in \mathrm{Irr}_{3'}(\norm{\fS_{3^{k-1}}}{P_{k-1}}\wr \fS_3)$. It is easy to see that
$\xi$ lies over a unique $3'$-degree irreducible character of $\norm{\fS_{3^{k-1}}}{P_{k-1}}^3$, which
necessarily is of the form $\lambda^{\otimes 3}$ for a unique $\lambda\in\irrq{\norm{\fS_{3^{k-1}}}{P_{k-1}}}{3}$. 
Thus $f(\xi)=f_\lambda(\xi)$, and  the union $f$ of the maps defined in Step 1 is well-defined.

Suppose now that $\mu\in \mathrm{Irr}_{3'}(\norm{H}{P})$, so in particular we have that $P'\leq \ker \mu$. Then 
it follows from (\ref{dprod}) that $\mu$ lies over a uniquely determined character $\psi\in\irrq{M}{3}$ 
with $P'\leq \ker \psi$, which necessarily is $\fS_3$-invariant. Thus, by Step \ref{bijection} in order to show that $f$ is surjective,
it is enough to see that for any $\psi\in\irrq{M}{3}$ 
with $P'\leq \ker \psi$, there exists $\lambda\in\irrq{\norm{\fS_{3^{k-1}}}{P_{k-1}}}{3}$ such that $\psi=\psi_\lambda$. 
Of course, this is contained in Step \ref{map2}. 

Finally, it is easy to deduce from Step \ref{map2}
that the map $f$ is injective, 

\end{proof}
We have completed the proof of Proposition \ref{bij_wreath}.
\end{proof}

\begin{thm}\label{t:2main correspondence p^k}
There exists a canonical bijection $\Phi_{3^k}$ between
$\mathrm{Irr}_{3'}(\fS_{3^k})$ and $\mathrm{Irr}_{3'}(\norm{\fS_{3^k}}{P_k})$. Moreover, 
$\Phi_{3^k}(\chi)$ is an irreducible constituent of the restricted character $\chi\downarrow_{\norm{\fS_{3^k}}{P_k}}$, for all $\chi\in\mathrm{Irr}_{3'}(\fS_{3^k})$.
\end{thm}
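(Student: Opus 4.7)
The plan is to proceed by induction on $k$, with base case $k=1$ immediate since $P_1$ is normal in $\fS_3$, so $\norm{\fS_3}{P_1} = \fS_3$ and $\Phi_3$ may be taken as the identity. For the inductive step, I would realize $\Phi_{3^k}$ as the composition of three canonical bijections: first the correspondence $\Phi^{(1)}\colon \chi \mapsto \chi^\star$ of Theorem \ref{t:main correspondence p^k} (specialized to $p=3$), which sends $\chi \in \mathrm{Irr}_{3'}(\fS_{3^k})$ to the unique $3'$-constituent of $\chi\downarrow_{\fS_{3^{k-1}} \wr \fS_3}$; then an intermediate bijection $\Phi^{(2)}\colon \mathrm{Irr}_{3'}(\fS_{3^{k-1}} \wr \fS_3) \to \mathrm{Irr}_{3'}(\norm{\fS_{3^{k-1}}}{P_{k-1}} \wr \fS_3)$ built from the inductive hypothesis $\Phi_{3^{k-1}}$; and finally the bijection $\Phi^{(3)}\colon \mathrm{Irr}_{3'}(\norm{\fS_{3^{k-1}}}{P_{k-1}} \wr \fS_3) \to \mathrm{Irr}_{3'}(\norm{\fS_{3^k}}{P_k})$ of Proposition \ref{bij_wreath}. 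The first and third are already known to be compatible with restriction; only $\Phi^{(2)}$ must be constructed and checked.

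For $\Phi^{(2)}$, I would use the parametrization of $3'$-characters of wreath products recalled before Lemma \ref{lem:1}: every $\xi \in \mathrm{Irr}_{3'}(\fS_{3^{k-1}} \wr \fS_3)$ has a unique expression $\xi = \tilde\chi \cdot \chi^\mu$ with $\chi \in \mathrm{Irr}_{3'}(\fS_{3^{k-1}})$ and $\mu \in \mathcal{H}(3)$, where $\tilde\chi$ denotes the canonical extension of $\chi^{\otimes 3}$ provided by Lemma \ref{wr-ext}. The same Clifford-theoretic parametrization applies verbatim with $\norm{\fS_{3^{k-1}}}{P_{k-1}}$ in place of $\fS_{3^{k-1}}$, identifying $\mathrm{Irr}_{3'}(\norm{\fS_{3^{k-1}}}{P_{k-1}} \wr \fS_3)$ with pairs $(\psi, \mu)$ for $\psi \in \mathrm{Irr}_{3'}(\norm{\fS_{3^{k-1}}}{P_{k-1}})$ and $\mu \in \mathcal{H}(3)$. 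The natural definition is then
\begin{equation*}
\Phi^{(2)}\bigl(\tilde\chi \cdot \chi^\mu\bigr) := \widetilde{\Phi_{3^{k-1}}(\chi)} \cdot \chi^\mu,
\end{equation*}
which is manifestly a bijection. Its compatibility with restriction reduces to verifying that $\tilde\chi \downarrow_{\norm{\fS_{3^{k-1}}}{P_{k-1}} \wr \fS_3}$ contains $\widetilde{\Phi_{3^{k-1}}(\chi)}$ as a constituent, since $\chi^\mu$ is inflated from the common complement $\fS_3$.

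The main obstacle is precisely this last naturality check for the canonical extension. Lemma \ref{wr-ext} pins down $\tilde\chi$ uniquely among the two possible extensions of $\chi^{\otimes 3}$ by a determinantal condition on $\fS_3$ together with the sign of the trace at a transposition; both conditions descend to the subwreath product $\norm{\fS_{3^{k-1}}}{P_{k-1}} \wr \fS_3$, as it contains the same complement $\fS_3$. Combined with the inductive assumption that $\Phi_{3^{k-1}}(\chi)$ is an irreducible constituent of $\chi\downarrow_{\norm{\fS_{3^{k-1}}}{P_{k-1}}}$, this forces the restriction of $\tilde\chi$ to contain $\widetilde{\Phi_{3^{k-1}}(\chi)}$. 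Composing $\Phi^{(3)} \circ \Phi^{(2)} \circ \Phi^{(1)}$ and chaining the three restriction compatibilities then produces the required $\Phi_{3^k}$ with $\Phi_{3^k}(\chi)$ an irreducible constituent of $\chi\downarrow_{\norm{\fS_{3^k}}{P_k}}$, completing the induction.
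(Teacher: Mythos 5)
Your proposal follows essentially the same route as the paper: induct on $k$ with $k=1$ trivial, pass from $\fS_{3^k}$ to $\fS_{3^{k-1}}\wr\fS_3$ via Theorem \ref{t:main correspondence p^k}, then to $\norm{\fS_{3^{k-1}}}{P_{k-1}}\wr\fS_3$ via the inductive hypothesis combined with Lemma \ref{wr-ext}, then to $\norm{\fS_{3^k}}{P_k}$ via Proposition \ref{bij_wreath}. This is exactly the paper's three-step decomposition (the paper constructs your $\Phi^{(2)}$ as the map $\tilde\psi\cdot\delta\mapsto\tilde\nu\cdot\delta$ with $\psi=\theta^{\otimes 3}$ and $\nu=\Phi_{3^{k-1}}(\theta)^{\otimes 3}$), and the chaining argument for compatibility with restriction is the same.

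One caution about the justification of your $\Phi^{(2)}$ step: your claim that the determinantal and trace conditions of Lemma \ref{wr-ext} ``descend to the subwreath product, as it contains the same complement $\fS_3$'' is not a valid deduction as stated. The restriction of $\tilde\chi$ to $\norm{\fS_{3^{k-1}}}{P_{k-1}}\wr\fS_3$ is reducible, and the determinant/trace conditions characterise the canonical extension of an irreducible module; they say nothing directly about which of the two extensions of $\Phi_{3^{k-1}}(\chi)^{\otimes 3}$ occurs as a constituent of the (reducible) restriction. The correct mechanism is the explicit permutation-module construction: if $W$ is a submodule of $U\downarrow_{\norm{\fS_{3^{k-1}}}{P_{k-1}}}$ affording $\Phi_{3^{k-1}}(\chi)$, then $W^{\otimes 3}$ with the natural $\fS_3$-permutation action embeds in $U^{\otimes 3}$ with the same action, so the ``unsigned'' extension of $W^{\otimes 3}$ automatically appears in the restriction of the ``unsigned'' extension of $U^{\otimes 3}$. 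The paper's own proof also treats this step very briefly (``it is easy to see''), so you are at the same level of detail, but your stated reason is not quite the right one.
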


\begin{proof}
We work by induction on $k\geq 1$. If $k=1$ then the result is clear, 
since $\fS_3$ has a normal Sylow $3$-subgroup, so assume that $k\geq 2$. 

Suppose that the result holds in $\fS_{3^{k-1}}$, and let $P_k\in\syl{3}{\fS_{3^k}}$. 
Let $H=\fS_{3^{k-1}}\wr \fS_3\leq \fS_{3^k}$ be determined by $P_k$ as in the beginning of this section. 
By Theorem \ref{t:main correspondence p^k}, we need to define a canonical bijection
$$
\irrq{H}{3}\longrightarrow \irrq{\norm{\fS_{3^{k}}}{P_{k}}}{3}
$$ which is compatible with character restriction. As before, write $P_{k}\cap (\fS_{3^{k-1}})^3=(P_{k-1})^3$. 
Then, observe that by Proposition \ref{bij_wreath}, it suffices to define 
a natural correspondence 
$$
\irrq{H}{3}\longrightarrow \mathrm{Irr}_{3'}(\norm{\fS_{3^{k-1}}}{P_{k-1}}\wr \fS_3)\, 
$$ which respects restriction of characters. 

Let $\psi=\theta^{\otimes 3}\in\irrq{(\fS_{3^{k-1}})^3}{3}$, where $\theta\in\irrq{\fS_{3^{k-1}}}{3}$.  
By the inductive hypothesis, let $\lambda=\Phi_{3^{k-1}}(\theta)$, and write 
$\nu=\lambda^{\otimes 3}\in\irrq{\norm{\fS_{3^{k-1}}}{P_{k-1}}^3}{3}$. 
In particular, note that $\nu$ is a constituent of $\psi\downarrow_{\norm{\fS_{3^{k-1}}}{P_{k-1}}^3}$. 
Let $\tilde {\psi}$ (respectively $\tilde {\nu}$) be the extension of $\psi$ (resp. $\nu$) 
to the wreath product $H$ (resp. $\norm{\fS_{3^{k-1}}}{P_{k-1}}\wr \fS_3$)
described in Lemma \ref{wr-ext}. 
Then 
it is easy to see that $\tilde\nu$ is
a constituent of the restriction of $\tilde\psi$ to ${\norm{\fS_{3^{k-1}}}{P_{k-1}}\wr \fS_3}$. 
Thus, the map $\tilde\psi\cdot\delta\mapsto \tilde\nu\cdot\delta$, where $\delta\in\irr{\fS_3}$, is a natural bijection
$$
\irrq{H\, |\, \psi}{3}\longrightarrow \mathrm{Irr}_{3'}(\norm{\fS_{3^{k-1}}}{P_{k-1}}\wr \fS_3\, |\, \nu)\, 
$$ which respects restriction of characters. Now, it is easy to see that the union of these maps, where $\psi$
runs in the set of $\fS_3$-invariant characters in $\irrq{(\fS_{3^{k-1}})^3}{3}$
and $\nu\in\irrq{\norm{\fS_{3^{k-1}}}{P_{k-1}}^3}{3}$ is defined accordingly as above,
is a canonical bijection as desired. 
\end{proof}

\section{A canonical McKay bijection for $\fS_{n}$}\label{sec:4}

In order to deal with the general case in symmetric groups and construct a canonical bijection between $\mathrm{Irr}_{3'}(\fS_n)$ and $\mathrm{Irr}_{3'}(\NB_{\fS_n}(P_n))$, we need to analyse first the case where $n=2\cdot 3^k$, for some $k\in\mathbb{N}$.

\subsection{The case $n=2\cdot 3^k$}   \label{sec:intro}
We start with some observations on the structure of the two sets $\mathrm{Irr}_{3'}(\fS_{2\cdot 3^k})$ and $\mathrm{Irr}_{3'}(\NB_{\fS_{2\cdot 3^k}}(P_{2\cdot 3^k}))$.
We first introduce the following useful definition.

\begin{definition}\label{hookgen}
Let $h_1, h_2\in\mathcal{H}(3^k)$ be two distinct hook partitions, $h_1\neq h_2$. We say that a partition $\lambda\in\mathcal{P}(2\cdot 3^k)$ is \textit{hook-generated} by the pair $\{h_1,h_2\}$ if $\lambda$ has two removable $3^k$-hooks $k_1, k_2$ and we have that $\lambda- k_i=h_i$ for $i\in\{1,2\}$. 
\end{definition}

\begin{remark}\label{remwelldef}
The concept introduced in \ref{hookgen} is well defined since for every pair $\{h_1,h_2\}$ of distinct $3^k$-hooks there exists a unique partition $\lambda\in\mathcal{P}(2\cdot 3^k)$ such that $\lambda$ is hook-generated by $\{h_1,h_2\}$. This can be seen by looking at the $3$-core towers of the partitions involved. From repeated applications of \cite[Theorem 3.3]{OlssonBook} we deduce that for any $i\in\{1,2\}$ there exists a unique $x_i\in\{1,\ldots, 3^k\}$ such that 
$T_j(h_i)=(\emptyset,\ldots, \emptyset)$ for all $j<k$ and $T_k(h_i)=((h_i)_1^k,\ldots,(h_i)_{3^k}^k)$, where $(h_i)_{x_i}^k=(1)$ and $(h_i)_{s}^k=\emptyset$, for all $s\in\{1,\ldots, 3^k\}\smallsetminus\{x_i\}$. Since $h_1\neq h_2$ it follows that $x_1\neq x_2$. 
If $\lambda\in\mathcal{P}(2\cdot 3^k)$ is hook generated by $\{h_1,h_2\}$, then it follows that $T_j(\lambda)=(\emptyset,\ldots, \emptyset)$ for all $j<k$ and $T_k(\lambda)=(\lambda_1^k,\ldots,\lambda_{3^k}^k)$, where $\lambda_{x_i}^k=(1)$ for all $i\in\{1,2\}$ and $\lambda_{s}^k=\emptyset$, for all $s\in\{1,\ldots, 3^k\}\smallsetminus\{x_1,x_2\}$. Since every partition is uniquely determined by its $3$-core tower, we have that $\lambda$ is unique. 
\end{remark}

We denote by $\mathcal{HG}(2\cdot 3^k)$ the subset of $\mathcal{P}(2\cdot 3^k)$ consisting of all partitions that are hook-generated by some pair of distinct $3^k$-hooks. 
Definition \ref{hookgen} and Remark \ref{remwelldef} show that a partition $\lambda$ lies in $\mathcal{HG}(2\cdot 3^k)$ if and only if its $3$-core tower $T(\lambda)$ has exactly two non-empty $3$-cores lying in the $k$-th layer. Both these $3$-cores are equal to $(1)$.

\medskip

Let $A=\{\chi^h\ |\ h\in\mathcal{H}(2\cdot 3^k)\}$ and let $B=\{\chi^\lambda\ |\ \lambda\in\mathcal{HG}(2\cdot 3^k)\}$.
From Theorem \ref{Mac} we have that $\mathrm{Irr}_{3'}(\fS_{2\cdot 3^k})=A\cup B$.

\begin{definition}
Let $n,m\in\mathbb{N}$. Given $\omega\in\{(n), (1^n)\}$ and $\lambda=(m-x,1^x)\in\mathcal{H}(m)$ we let 
\begin{eqnarray}
\lambda\bullet \omega =
\begin{cases}
(m-x+n,1^x) &\text{ if } \omega=(n)\,,\\
(m-x,1^{x+n}) &\text{ if } \omega=(1^n)\,.\\
\end{cases}
\end{eqnarray}
Moreover, we write $\varepsilon(\lambda\bullet\omega)=\delta_{\lambda\lambda'}+\delta_{(1^{n})\omega}$, where $\delta_{xy}=1$ if $x=y$ and $\delta_{xy}=0$ if $x\neq y$.
\end{definition}
\noindent It is easy to observe that $A=\{h\bullet\omega\  |\  h\in\mathcal{H}(3^k), \omega\in\{(3^k), (1^{3^k})\}\ \}$. 

We next describe the set of $3'$-degree irreducible characters
of $\fS_{3^k}\wr \fS_2\leq \fS_{2\cdot 3^{k}}$.
As in Lemma \ref{complement}, it is easy to see that all complements of the base group $(\fS_{3^k})^2$
in the wreath product $\fS_{3^k}\wr \fS_2$ are conjugate. Thus, for each $\chi=\chi^h\in\irrq{\fS_{3^k}}{3}$
with $h\in\mathcal H(3^k)$, Lemma \ref{wr-ext} provides a uniquley defined extension $\tilde\chi$ of
the character 
$\chi^{\otimes 2}\in\irrq{(\fS_{3^k})^2}{3}$
to $\fS_{3^k}\wr \fS_2$. If $\mu\in\mathcal P(2)$, we denote
$$
\chi(h, \mu):=\tilde\chi\cdot\chi^\mu\in\irr{\fS_{3^k}\wr \fS_2}\, ,
$$ where of course $\chi^\mu\in\irr{\fS_2}$ is identified with its inflation to $\fS_{3^k}\wr\fS_2$. 
\color{black}
Now it is easy to see that $\mathrm{Irr}_{3'}(\fS_{3^k}\wr \fS_2)=C\cup D$, where 
$$C=\{ \chi(h,\mu)\color{black} \ |\ h\in\mathcal{H}(3^k)\ \text{and}\ \mu\in\mathcal{P}(2)\}$$ and 
$$D=\{(\chi^{h_1}\otimes\color{black} \chi^{h_2})\uparrow_{(\fS_{3^k})^2}^{\fS_{3^k}\wr \fS_2}\ |\ h_1,h_2\in\mathcal{H}(3^k)\ \text{and}\ h_1\neq h_2\}.$$

We now let $\Psi_1: A\longrightarrow C$ be the map defined by 
\begin{eqnarray}
\Psi_1(\chi^{h\bullet\omega})=
\begin{cases}
\chi(h,(2)) &\text{ if } \varepsilon(h\bullet\omega) \text{ is even},\\
\chi(h,(1^2)) &\text{ if } \varepsilon(h\bullet\omega) \text{ is odd}.\\
\end{cases}
\end{eqnarray}

On the other hand we let $\Psi_2: B\longrightarrow D$ be the map defined by 
$$\Psi_2(\chi^\lambda)=(\chi^{h_1}\otimes\color{black} \chi^{h_2})\uparrow_{(\fS_{3^k})^2}^{\fS_{3^k}\wr \fS_2},$$
where $\{h_1,h_2\}$ hook-generates $\lambda$.

\begin{thm}\label{t:2Psi}
The map $\Psi:\mathrm{Irr}_{3'}(\fS_{2\cdot 3^k})\longrightarrow\mathrm{Irr}_{3'}(\fS_{3^k}\wr \fS_2)$, defined by $\Psi(a)=\Psi_1(a)$ for all $a\in A$, and by  $\Psi(b)=\Psi_2(b)$ for all $b\in B$, is a canonical bijection. 
\end{thm}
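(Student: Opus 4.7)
The plan is to decompose the argument along the natural partitions $\mathrm{Irr}_{3'}(\fS_{2\cdot 3^k}) = A \sqcup B$ and $\mathrm{Irr}_{3'}(\fS_{3^k} \wr \fS_2) = C \sqcup D$, then establish bijectivity of $\Psi_1 \colon A \to C$ and $\Psi_2 \colon B \to D$ separately. Both partitions are genuine disjoint unions: Theorem \ref{Mac} identifies $\mathrm{Irr}_{3'}(\fS_{2\cdot 3^k})$ with characters labelled by partitions whose $3$-core tower has total mass equal to $2$ in layer $k$ (and $0$ elsewhere), and this splits according to whether the mass is concentrated at a single position (yielding $A$) or spread over two positions (yielding $B$). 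On the wreath-product side, characters in $C$ lie over an $\fS_2$-invariant base-group character of the form $(\chi^h)^{\otimes 2}$, while characters in $D$ lie over non-invariant characters $\chi^{h_1} \otimes \chi^{h_2}$ with $h_1 \neq h_2$, forcing $C \cap D = \emptyset$.

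To handle $\Psi_2$, I combine Remark \ref{remwelldef}, which gives a bijection between $\mathcal{HG}(2\cdot 3^k)$ and the set of unordered pairs $\{h_1,h_2\}$ of distinct hooks in $\mathcal{H}(3^k)$, with standard Clifford theory. Since $h_1 \neq h_2$, the character $\chi^{h_1} \otimes \chi^{h_2}$ is not invariant under the swap action of $\fS_2$, hence its induction to $\fS_{3^k} \wr \fS_2$ is irreducible. Mackey's formula gives
\[
\bigl((\chi^{h_1}\otimes \chi^{h_2})\big\uparrow_{(\fS_{3^k})^2}^{\fS_{3^k}\wr \fS_2}\bigr)\big\downarrow_{(\fS_{3^k})^2} = \chi^{h_1}\otimes \chi^{h_2} + \chi^{h_2}\otimes \chi^{h_1},
\]
so the unordered pair $\{h_1,h_2\}$ is recoverable from the induced character; hence distinct pairs produce distinct elements of $D$, and $\Psi_2$ is a bijection.

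For $\Psi_1$, I reduce to a fibre-wise check. The counts match: $|A| = |\mathcal{H}(2\cdot 3^k)| = 2\cdot 3^k = |\mathcal{H}(3^k)|\cdot|\mathcal{P}(2)| = |C|$, and since the map $(h,\omega) \mapsto h \bullet \omega$ is a bijection from $\mathcal{H}(3^k) \times \{(3^k),(1^{3^k})\}$ to $\mathcal{H}(2\cdot 3^k)$, $A$ decomposes as a disjoint union of fibres of size $2$ indexed by $h \in \mathcal{H}(3^k)$. The corresponding fibre in $C$ over $h$ is $\{\chi(h,(2)), \chi(h,(1^2))\}$. For the two admissible choices of $\omega$,
\[
\varepsilon(h \bullet (3^k)) = \delta_{hh'}, \qquad \varepsilon(h \bullet (1^{3^k})) = \delta_{hh'} + 1,
\]
which differ by exactly $1$, so the two parities are opposite regardless of whether $h$ is self-conjugate. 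Thus the two elements of each fibre in $A$ are sent to the two distinct elements of the corresponding fibre in $C$, and $\Psi_1$ is a bijection.

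Combining, $\Psi = \Psi_1 \sqcup \Psi_2$ is a bijection between $A \sqcup B$ and $C \sqcup D$, as required. The only delicate point is the role of $\varepsilon$ in $\Psi_1$: the formula is designed precisely so that the assignment swaps in the self-conjugate case, which is what allows the map to be defined uniformly across all hooks of $3^k$; once this observation is made explicit, the rest of the verification is routine bookkeeping.
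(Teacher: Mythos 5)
Your argument for bijectivity is correct and fills in details the paper's proof leaves implicit. Your fibre-wise analysis of $\Psi_1$ — observing that $(h,\omega)\mapsto h\bullet\omega$ is a bijection from $\mathcal{H}(3^k)\times\{(3^k),(1^{3^k})\}$ onto $\mathcal{H}(2\cdot3^k)$, and that $\varepsilon(h\bullet(3^k))$ and $\varepsilon(h\bullet(1^{3^k}))$ differ by exactly $1$ so the two choices of $\omega$ always land on the two distinct elements of $\{\chi(h,(2)),\chi(h,(1^2))\}$ — makes explicit why $\Psi_1$ is well defined regardless of whether $h$ is self-conjugate, which the paper dismisses as ``clearly a bijection.'' Your Mackey/Clifford argument for $\Psi_2$ is a clean way to see both well-definedness and injectivity and matches the paper's appeal to Remark \ref{remwelldef}.

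However, the theorem asserts that $\Psi$ is a \emph{canonical} bijection, and your proof stops at bijectivity. In this paper ``canonical'' means choice-free and equivariant under group automorphisms (and the Galois group, though that is vacuous here since all irreducible characters of $\fS_n$ are rational). For $k\ge 2$ the equivariance is automatic, since $\Aut(\fS_{2\cdot 3^k})=\Inn(\fS_{2\cdot 3^k})$ and the construction visibly commutes with inner automorphisms. But for $k=1$ the group $\fS_6$ has an exceptional outer automorphism, and the paper explicitly verifies equivariance in that case by tabulating $\Psi$ in Example \ref{s6}. This is in fact the reason $\varepsilon$ is defined the way it is: the parity shift $\delta_{hh'}$ in the self-conjugate case is exactly what makes the map compatible with the outer automorphism of $\fS_6$. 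You noticed that $\varepsilon$ ``swaps in the self-conjugate case'' but attributed it only to uniformity of definition; it is really there for equivariance, and the $\fS_6$ check is the one non-routine point your proof omits.
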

\begin{proof}
The map $\Psi_1$ is clearly a bijection between $A$ and $C$. The map $\Psi_2$ is well defined, since any partition $\lambda\in\mathcal{HG}(2\cdot 3^k)$ is hook-generated by a unique pair of distinct $3^k$-hooks. This can be proved with an argument similar to the one used in remark \ref{remwelldef}. The map is clearly surjective, and it is injective by Remark \ref{remwelldef}.
Since both $\Psi_1$ and $\Psi_2$ are choice-free bijections, we have that so is $\Psi$. 

Finally we observe that $\Psi$ is equivariant with respect to group automorphisms. 
 This is straightforward for inner automorphisms of $\fS_{2\cdot 3^k}$, and then it can be checked explicitly 
for the general case
after computing $\Psi$ for $\fS_6$ (when $k=1$), see Example \ref{s6} below. 
\end{proof}

We remark that a  similar map was constructed in a different context by Evseev in \cite{Evseev}.
\begin{example}\label{s6}
Let $n=6$. The partition labelling the irreducible $3'$-characters of $\fS_6$ are $\mathcal{H}(6)\cup \{(3,3); (3,2,1); (2,2,2)\}$. 
In the equations below we completely describe 
the 
%
bijection $\Psi$ obtained in Theorem \ref{t:2Psi} in the case of $\fS_6$.
\begin{eqnarray*}
\chi^{(6)} \mapsto  \chi((3),(2)) &,&\  
\chi^{(5,1)} \mapsto  \chi((2,1),(1^2)),\ \ \ \ \ \ \ \
\chi^{(4,1^2)} \mapsto  \chi((1^3),(2))\\ 
\chi^{(3,1^3)} \mapsto  \chi((3),(1^2))&,&\ 
\chi^{(2,1^4)} \mapsto  \chi((2,1),(2)),\ \ \ \ \ \ \ \ 
\chi^{(1^6)} \mapsto  \chi((1^3),(1,1))\\
\chi^{(3,3)} \mapsto  (\chi^{(3)}\otimes\chi^{(2,1)})\uparrow^{\fS_3\wr\fS_2} &,&\ 
\chi^{(3,2,1)} \mapsto  (\chi^{(3)}\otimes\chi^{(1^3)})\uparrow^{\fS_3\wr\fS_2},\
\chi^{(2,2,2)} \mapsto  (\chi^{(1^3)}\otimes\chi^{(2,1)})\uparrow^{\fS_3\wr\fS_2}\hspace{-2mm}.
\end{eqnarray*}
\end{example}

\medskip

Let $P_r$ denote a Sylow $3$-subgroup of $\fS_r$. It is clear that
$P_{2\cdot 3^k}$ partitions the set $\{1,\ldots,2\cdot~3^k\}$ into
two orbits of size $3^k$, and that $P_{2\cdot 3^k}=(P_{3^k})^2\leq (\fS_{3^k})^2$, where $(\fS_{3^k})^2$ is
the Young subgroup of $\fS_{2\cdot 3^k}$ associated to that set partition. Now, it is not difficult to see that

$$
\norm{\fS_{2\cdot 3^k}}{P_{2\cdot3^k}}=\norm{\fS_{3^k}}{P_{3^k}}\wr \fS_2\, ,
$$and the complements of the base group in this wreath product are all conjugate.
Write $N_r:=\NB_{\fS_{r}}(P_{r})$ for $r\in\mathbb N$, so 
$N_{2\cdot 3^k}=N_{3^k}\wr \fS_2\leq \fS_{3^k}\wr \fS_2$. If $\phi\in\irr{N_{3^k}}$, we denote by 
$\tilde\phi\in\irr{N_{2\cdot 3^k}}$
the extension of $\phi^{\otimes 2}\in\irr{(N_{3^k})^2}$ prescribed by Lemma \ref{wr-ext}. Then
we have that $\mathrm{Irr}_{3'}(N_{2\cdot 3^k})$ equals to
$$\{\tilde\phi\cdot\chi^\mu\ |\ \phi\in\mathrm{Irr}_{3'}(N_{3^k}),\ \mu\in\mathcal{P}(2)\}\cup\{(\phi\otimes\psi)\uparrow^{N_{2\cdot 3^k}}_{(N_{3^k})^2}\ |\ \phi,\psi\in\mathrm{Irr}_{3'}(N_{3^k}),\ \phi\neq\psi\}.$$

\color{black}
Now, Theorem \ref{t:2main correspondence p^k} together with the structure of $\mathrm{Irr}_{3'}(\fS_{3^k}\wr \fS_2)=C\cup D$ as discussed at the beginning of Section \ref{sec:intro}, shows that the following holds. 

\begin{proposition} \label{prop:2Theta}
The map $\Theta:\mathrm{Irr}_{3'}(\fS_{3^k}\wr \fS_2)\longrightarrow \mathrm{Irr}_{3'}(N_{2\cdot 3^k})$, defined by 
$$
 \Theta\big(\chi(\lambda,\mu)\big)=\tilde\phi_\lambda\cdot \chi^\mu
$$ 
for all $\chi(\lambda,\mu)\in C$, where $\phi_\lambda=\Phi_{3^k}(\chi^\lambda)\in\irrq{N_{3^k}}{3}$; and by 
$$\Theta\big((\chi^\lambda\otimes\color{black} \chi^\nu)\uparrow^{\fS_{3^k}\wr \fS_2}_{(\fS_{3^k})^2}\big)=(\Phi_{3^k}(\chi^\lambda)\otimes\color{black} \Phi_{3^k}(\chi^\nu))\uparrow^{N_{2\cdot 3^k}}_{(N_{3^k})^2}$$ for all $(\chi^\lambda\otimes\color{black} \chi^\nu)\uparrow^{\fS_{3^k}\wr \fS_2}_{(\fS_{3^k})^2}\in D$, 
is a canonical bijection. 
Moreover, $\Theta(\chi)$ is an irreducible constituent of $\chi\downarrow_{N_{2\cdot 3^k}}$, for all $\chi\in\mathrm{Irr}_{3'}(\fS_{3^k}\wr \fS_2)$.
\end{proposition}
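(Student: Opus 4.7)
The plan is to split $\mathrm{Irr}_{3'}(\fS_{3^k}\wr\fS_2) = C\cup D$ and verify the claim on each piece separately. Bijectivity is straightforward: on $C$, the assignment $\chi(\lambda,\mu)\mapsto\tilde\phi_\lambda\cdot\chi^\mu$ composes the canonical bijection $\chi^\lambda\mapsto\phi_\lambda$ of Theorem \ref{t:2main correspondence p^k} with the canonical extension construction of Lemma \ref{wr-ext}. On $D$, injectivity of $\Phi_{3^k}$ implies that $h_1\neq h_2$ forces $\phi_{h_1}\neq\phi_{h_2}$, so the stabilizer of $\phi_{h_1}\otimes\phi_{h_2}$ in $N_{2\cdot 3^k}$ equals $(N_{3^k})^2$ and the induced character $\Theta(\chi)$ is irreducible by Clifford theory; moreover the induced map on unordered pairs is bijective.

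For the constituent statement when $\chi\in D$, I would use Mackey's formula and Frobenius reciprocity. With $G = \fS_{3^k}\wr\fS_2$, $H = (\fS_{3^k})^2$, $K=(N_{3^k})^2$ and $t$ a transposition in $\fS_2$, the double cosets $K\backslash G/H$ are represented by $\{1,t\}$, so
\[
\chi\downarrow_K \;=\; \chi^{h_1}\downarrow_{N_{3^k}}\otimes\chi^{h_2}\downarrow_{N_{3^k}} \;+\; \chi^{h_2}\downarrow_{N_{3^k}}\otimes\chi^{h_1}\downarrow_{N_{3^k}}.
\]
Then
\[
\langle \chi\downarrow_{N_{2\cdot 3^k}},\Theta(\chi)\rangle \;=\; \langle \chi\downarrow_K, \phi_{h_1}\otimes\phi_{h_2}\rangle \;\geq\; \langle\chi^{h_1}\downarrow_{N_{3^k}},\phi_{h_1}\rangle\cdot\langle\chi^{h_2}\downarrow_{N_{3^k}},\phi_{h_2}\rangle \;\geq\; 1,
\]
using Theorem \ref{t:2main correspondence p^k} for the last inequality. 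Since $\Theta(\chi)$ is irreducible, it must occur as a constituent of $\chi\downarrow_{N_{2\cdot 3^k}}$.

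The main obstacle, I expect, is the constituent statement for $\chi = \chi(\lambda,\mu)\in C$: one must verify that the canonical extension $\tilde\phi_\lambda$ of $\phi_\lambda^{\otimes 2}$ (and not its twist by the sign character) appears in the restriction of $\tilde{\chi^\lambda}$ to $N_{3^k}\wr\fS_2$. My plan is to revisit the explicit basis construction of $V_1 = \tilde{\chi^\lambda}$ given in the proof of Lemma \ref{wr-ext}. Choosing a basis of $\chi^\lambda$ adapted to an $N_{3^k}$-irreducible decomposition $\chi^\lambda\downarrow_{N_{3^k}} = \bigoplus_j W_j$, each diagonal submodule $W_j\otimes W_j\subseteq \chi^\lambda\otimes\chi^\lambda$ is $\fS_2$-stable, and the $\fS_2$-action inherited from $V_1$ is precisely the natural swap action used in Lemma \ref{wr-ext} to define the canonical extension of $W_j$. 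Since $\phi_\lambda$ is a constituent of $\chi^\lambda\downarrow_{N_{3^k}}$ by Theorem \ref{t:2main correspondence p^k}, taking $j$ with $W_j\cong\phi_\lambda$ produces $\tilde\phi_\lambda$ as an irreducible constituent of $\tilde{\chi^\lambda}\downarrow_{N_{3^k}\wr\fS_2}$. Multiplying by the linear inflation $\chi^\mu$ preserves this, so $\tilde\phi_\lambda\cdot\chi^\mu = \Theta(\chi(\lambda,\mu))$ appears as a constituent of $\chi(\lambda,\mu)\downarrow_{N_{2\cdot 3^k}}$, and canonicity of $\Theta$ follows from canonicity of $\Phi_{3^k}$, Lemma \ref{wr-ext}, and Remark \ref{remwelldef}.
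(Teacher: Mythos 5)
Your treatment of the set $D$ is correct: the Mackey decomposition, Frobenius reciprocity, and Clifford-theoretic irreducibility of $\Theta(\chi)$ are exactly what is needed. The bijectivity discussion and the paper's (terse) own justification --- ``follows from Theorem \ref{t:2main correspondence p^k} and the structure of $\Irr_{3'}(\fS_{3^k}\wr\fS_2)$'' --- are in the same spirit as your decomposition into $C$ and $D$.

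However, there is a genuine gap in the $C$ case, precisely at the point you flag as the ``main obstacle.'' You write that $\tilde{\chi^\lambda}$ equals the module $V_1$ of Lemma \ref{wr-ext} (the one where $\fS_2$ literally permutes tensor factors), and that the swap action on a diagonal block $W_j\otimes W_j$ is the one defining the canonical extension of $W_j^{\otimes 2}$. Neither identification is automatic. In the proof of Lemma \ref{wr-ext}, for $m=2$ and $\dim U = d$ odd, one computes $\det\theta_1(t)=(-1)^{d(d-1)/2}=(-1)^{(d-1)/2}$; so the distinguished extension is $V_1$ when $d\equiv 1\pmod 4$, but $V_1\otimes\sgn$ when $d\equiv 3\pmod 4$. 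For $\phi_\lambda$ this ambiguity does not arise: $\phi_\lambda(1)$ divides $[N_{3^k}:P_{3^k}]=2^k$, hence is a power of $2$ and never $\equiv 3 \pmod 4$, so $\tilde\phi_\lambda$ really is the natural swap extension $V_1'$ of $\phi_\lambda^{\otimes 2}$. But $\chi^\lambda(1)=\binom{3^k-1}{x}$ can be $\equiv 3\pmod 4$: for instance, with $k=3$ and $\lambda=(19,1^8)$ one has $\chi^\lambda(1)=\binom{26}{8}\equiv 3\pmod 4$. In that case $\tilde{\chi^\lambda}=V_1\otimes\sgn$, and your diagonal-block argument then produces $V_1'\otimes\sgn=\tilde\phi_\lambda\cdot\chi^{(1^2)}$ as a constituent of $\tilde{\chi^\lambda}\downarrow_{N_{3^k}\wr\fS_2}$, not $\tilde\phi_\lambda$. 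Concretely, if $a:=\langle\chi^\lambda\downarrow_{N_{3^k}},\phi_\lambda\rangle$, a Clifford/multiplicity-space computation gives $\langle V_1\downarrow,\,V_1'\rangle=\binom{a+1}{2}$ and $\langle V_1\downarrow,\,V_1'\otimes\sgn\rangle=\binom{a}{2}$, so when $a=1$ the twisted extension simply does not occur, and the asserted constituent property fails under your argument. To close the gap you would need to either rule out $\chi^\lambda(1)\equiv 3\pmod 4$ (false), or show that $a\geq 2$ whenever $\chi^\lambda(1)\equiv 3\pmod 4$, or otherwise track the sign discrepancy between $\tilde{\chi^\lambda}$ and $V_1$ and between the ``natural swap'' and the canonical extension of $\phi_\lambda^{\otimes 2}$, and verify they cancel.
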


\begin{thm}\label{t:main23^k}
The map $\Phi_{2\cdot 3^k}$ obtained as the composition of the maps $\Psi$ and $\Theta$ is a canonical bijection between $\mathrm{Irr}_{3'}(\fS_{2\cdot 3^k})$ and $\mathrm{Irr}_{3'}(N_{2\cdot 3^k})$.
\end{thm}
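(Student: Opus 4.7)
The plan is to observe that $\Phi_{2\cdot 3^k}$ is obtained as the composition $\Theta\circ\Psi$, where each factor has already been established to be a canonical bijection in the preceding results. Since the composition of bijections is a bijection, the only content to check is that canonicity is preserved, and this is immediate from the fact that both factors are constructed without any arbitrary choices.

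First I would note that $\Psi: \mathrm{Irr}_{3'}(\fS_{2\cdot 3^k})\to \mathrm{Irr}_{3'}(\fS_{3^k}\wr\fS_2)$ is a canonical bijection by Theorem \ref{t:2Psi}: on $A$ the map $\Psi_1$ is purely combinatorial, using the decomposition $h\bullet\omega$ and the canonical extension of $\chi^{\otimes 2}$ to $\fS_{3^k}\wr\fS_2$ prescribed by Lemma \ref{wr-ext}; on $B$ the map $\Psi_2$ sends a hook-generated partition to the induction of the (unordered) tensor product of the unique pair of $3^k$-hook characters generating it, which is choice-free by Remark \ref{remwelldef}. Next I would note that $\Theta:\mathrm{Irr}_{3'}(\fS_{3^k}\wr\fS_2)\to \mathrm{Irr}_{3'}(N_{2\cdot 3^k})$ is a canonical bijection by Proposition \ref{prop:2Theta}, constructed via the canonical McKay bijection $\Phi_{3^k}$ of Theorem \ref{t:2main correspondence p^k} together with the canonical extensions from Lemma \ref{wr-ext}.

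Thus $\Phi_{2\cdot 3^k}:=\Theta\circ\Psi$ is a bijection from $\mathrm{Irr}_{3'}(\fS_{2\cdot 3^k})$ onto $\mathrm{Irr}_{3'}(N_{2\cdot 3^k})$. To conclude canonicity, I would spell out that every ingredient used in both $\Psi$ and $\Theta$ is either uniquely determined by combinatorial data attached to the partition (hooks, $3$-core tower), or is the uniquely determined extension singled out in Lemma \ref{wr-ext}, or is the inductively obtained canonical bijection $\Phi_{3^k}$. In particular, both maps commute with $\Gal(\bar\QQ/\QQ)$ and with the inner automorphisms/outer conjugation action on the normalizer side, and these properties are preserved under composition.

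The only step that requires any real care is checking that the two canonical bijections can be composed unambiguously, which boils down to noticing that $\Psi$ lands in $\mathrm{Irr}_{3'}(\fS_{3^k}\wr\fS_2)$, the common source of $\Theta$; this is by construction. I do not expect a genuine obstacle here, since all the substantial work — the McKay bijection for $\fS_{3^k}$, the restriction-compatible global bijection to $\fS_{3^{k-1}}\wr\fS_3$, and the local bijection from $\fS_{3^k}\wr\fS_2$ to $N_{2\cdot 3^k}$ — has already been carried out in the preceding sections.
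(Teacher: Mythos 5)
Your proposal is correct and matches the paper exactly: the theorem is stated without a separate proof precisely because it follows immediately from Theorem \ref{t:2Psi} (showing $\Psi$ is a canonical bijection) and Proposition \ref{prop:2Theta} (showing $\Theta$ is a canonical bijection), with canonicity preserved under composition. You have identified all the relevant ingredients and the argument is sound.
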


\medskip
In the last part of this section
we focus on giving a more precise description 
of 
the canonical bijection constructed in Theorem \ref{t:main correspondence p^k} for the prime $p=3$.

\begin{proposition}\label{p:hooks23^k}
Let $n=2\cdot 3^k$, let $h=(n-\ell,1^\ell)\in\mathcal{H}(n)$ and let $K=\fS_{3^k}\wr \fS_2$.
The restriction $(\chi^h)\downarrow_{K}$ has a unique irreducible constituent $\phi^h$ lying in $C$. 

More precisely if $m\in\mathbb{N}$ is such that $\ell=2m+x$ for some $x\in\{0,1\}$, then $\phi^h=\chi(\lambda,\mu)$ where $\lambda=(3^k-m,1^m)$ and $\mu=(2)$ if $m+x$ is even, $\mu=(1,1)$ if $m+x$ is odd. 
\end{proposition}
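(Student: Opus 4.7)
The plan is to combine a Littlewood--Richardson calculation (identifying the hook $\lambda$) with a character value computation on a distinguished element of $K$ (to pin down $\mu$).

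Any constituent of $\chi^h\downarrow_K$ lying in $C$ is of the form $\chi(\lambda',\mu)$ with $\lambda'\in\mathcal{H}(3^k)$ and $\mu\in\mathcal{P}(2)$, and hence lies over $(\chi^{\lambda'})^{\otimes 2}$ in the base group $(\fS_{3^k})^2$. I would therefore compute $\langle\chi^h\downarrow_{(\fS_{3^k})^2},(\chi^{\lambda'})^{\otimes 2}\rangle$ using Littlewood--Richardson. Because $h$ is a hook, every $\lambda'\subseteq h$ is itself a hook $(p,1^q)$, and the skew shape $h\smallsetminus\lambda'$ decomposes as a disjoint row of length $n-\ell-p$ in row $1$ together with a column of length $\ell-q$ in column $1$. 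A direct analysis of Littlewood--Richardson fillings by reverse lattice words of type $\lambda'$---the rightmost row entry is forced to be $1$, hence the entire row consists of $1$'s, while the column entries form a strictly increasing sequence starting at either $1$ or $2$---shows that the coefficient equals $1$ exactly when $(p,q)=(3^k-m,m)$ with $\ell\in\{2m,2m+1\}$, and equals $0$ otherwise. This forces $\lambda'=\lambda=(3^k-m,1^m)$, so $\chi^h\downarrow_K$ contains at most one constituent from $C$, namely $\chi(\lambda,\mu_0)$ for a unique $\mu_0\in\{(2),(1,1)\}$, with multiplicity at most $1$.

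To pin down $\mu_0$ and confirm existence, I would evaluate on the element $w\in K$ corresponding to the nontrivial element $t$ of the complement $\fS_2$ of $(\fS_{3^k})^2$; viewed in $\fS_{2\cdot 3^k}$, the element $w$ has cycle type $(2^{3^k})$. The Littlewood--Richardson computation shows every constituent of $\chi^h\downarrow_{(\fS_{3^k})^2}$ is a product of two hook characters and hence has $3'$-degree, so every constituent of $\chi^h\downarrow_K$ lies in $C\cup D$. Each constituent in $D$ is induced from the base group, so vanishes on $w$, and we obtain
\[
\chi^h(w)\,=\,\chi(\lambda,\mu_0)(w)\,=\,\widetilde{\chi^\lambda}(w)\cdot\chi^{\mu_0}(t).
\]
The trace calculation in the proof of Lemma \ref{wr-ext} (for $m=2$) gives $\widetilde{\chi^\lambda}(w)=\chi^\lambda(1)=\binom{3^k-1}{m}$.

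To compute $\chi^h(w)$, I would use the exterior-power realization $\chi^{(n-\ell,1^\ell)}=\Lambda^\ell V$ for $V$ the standard $(n-1)$-dimensional $\fS_n$-module. Since the eigenvalues of $w$ on $V$ are $+1$ with multiplicity $3^k-1$ and $-1$ with multiplicity $3^k$,
\[
\sum_{\ell=0}^{n-1}t^\ell\chi^{(n-\ell,1^\ell)}(w)\,=\,(1+t)^{3^k-1}(1-t)^{3^k}\,=\,(1-t^2)^{3^k-1}(1-t),
\]
and extracting the coefficient of $t^{2m+x}$ yields $\chi^h(w)=(-1)^{m+x}\binom{3^k-1}{m}$. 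Comparing forces $\chi^{\mu_0}(t)=(-1)^{m+x}$, so $\mu_0=(2)$ if $m+x$ is even and $\mu_0=(1,1)$ if $m+x$ is odd, as claimed. The main technical obstacle is the Littlewood--Richardson bookkeeping in the first paragraph---ensuring that the multiplicity equals exactly one for $\lambda'=\lambda$ and vanishes for all other hooks $\lambda'$---while the sign computation via exterior powers is clean once set up.
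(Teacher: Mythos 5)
Your proof is correct and follows essentially the same strategy as the paper: a Littlewood--Richardson analysis pins down the unique hook $\lambda=(3^k-m,1^m)$ over which $\chi^h\downarrow_K$ can have a constituent in $C$, and then evaluation at an element of cycle type $(2^{3^k})$ determines $\mu$, using that constituents from $D$ are induced from the base group and so vanish there. The only small variation is that you compute $\chi^h$ on that element via the exterior-power realization $\chi^{(n-\ell,1^\ell)}=\Lambda^\ell V$ and a generating function, whereas the paper invokes the Murnaghan--Nakayama rule directly; both correctly give $(-1)^{m+x}\binom{3^k-1}{m}$.
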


\begin{proof}
Let $\mu=(3^k-y,1^y)\in\mathcal{H}(3^k)$ and let $\nu\in\mathcal{H}(2\cdot 3^k)$ be such that $\chi^\nu$ is an irreducible constituent of 
$(\chi^\mu\otimes \chi^\mu)\uparrow_{(\fS_{3^k})^2}^{\fS_{2\cdot 3^k}}$. An easy application of the Littlewood-Richardson rule shows that 
$2y+1\leq\ell(\nu) \leq 2(y+1)$. We deduce that 
\begin{eqnarray}\label{eq3}
\left\langle \chi^h\downarrow_{(\fS_{3^k})^2} , \chi^\mu\otimes\color{black} \chi^\mu\right\rangle=
\begin{cases}
1 &\text{ if } \mu=\lambda\,,\\
0 &\text{ if } \mu\in\mathcal{H}(3^{k})\smallsetminus\{\lambda\}\,.\\
\end{cases}
\end{eqnarray}
This implies that there exists $\nu\in\mathcal{P}(2)$ such that $\phi^h:=\chi(\lambda, \nu)$ is the unique irreducible constituent of $(\chi^h)\downarrow_{K}$ lying in $C$. 
Equivalently, $(\chi^h)\downarrow_K=\phi^h + \Delta$, where $\Delta$ is a sum (possibly empty) of irreducible characters of $K$ of the form $(\chi^\tau\otimes\color{black}\chi^\sigma)\uparrow_{(\fS_{3^k})^2}^K$, for some $\sigma, \tau\in\mathcal{H}(3^k)$. In particular $\Delta(\gamma)=0$, where $\gamma\in K\leq \fS_{2\cdot 3^k}$ is the element of cycle type $(2^{3^k})$ such that $K=\big(\fS_{3^k}\times \fS_{3^k}^\gamma\big)\rtimes\langle\gamma\rangle$.
Using the Murnaghan-Nakayama formula we deduce that 
$${3^k-1\choose m}(-1)^{m+x}=\chi^h(\gamma)=\phi^h(\gamma)=\chi^\lambda(1)\chi^\nu((12))={3^k-1\choose m}\chi^\nu((12)).$$
We conclude that $\nu=(2)$ if $m+x$ is even, $\nu=(1,1)$ if $m+x$ is odd. 
The proof is concluded. 
\end{proof}

\begin{lemma}\label{l:231}
Let $k\in\mathbb{N}_{\geq 2}$ and let $m\in\{0,1,\ldots, 3^{k-1}-1\}$. 
Let $\lambda=(3^k-3m,1^{3m})\in\mathcal{H}(3^k)$, $\alpha=(2\cdot 3^{k-1}-2m,1^{2m})\in\mathcal{H}(2\cdot 3^{k-1})$
and $\mu=(3^{k-1}-m,1^m)\in\mathcal{H}(3^{k-1})$. 
Then $\psi=\chi^\alpha\otimes\color{black}\chi^\mu$ is the unique irreducible character of $\fS_{2\cdot 3^k}\times\fS_{3^k}$ such that 
$$\left\langle \chi^{\lambda}\downarrow_{\fS_{2\cdot 3^{k-1}}\times\fS_{3^{k-1}}},\psi \right\rangle\neq 0 \neq \left\langle \psi\downarrow_{(\fS_{3^{k-1}})^3}, \chi^\mu\otimes\color{black}\chi^\mu\otimes\color{black}\chi^\mu\right\rangle.$$
Moreover, $\left\langle\chi^{\lambda}\downarrow_{\fS_{2\cdot 3^{k-1}}\times\fS_{3^{k-1}}},\psi \right\rangle=1$.
\end{lemma}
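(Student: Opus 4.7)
The plan is to apply the Littlewood--Richardson rule twice, very much in the spirit of Lemma \ref{lem:1}. Write an arbitrary irreducible character of $\fS_{2\cdot 3^{k-1}}\times \fS_{3^{k-1}}$ as $\psi' = \chi^\beta \otimes \chi^\gamma$. The second non-vanishing condition factors as
$$\left\langle \chi^\beta \downarrow_{(\fS_{3^{k-1}})^2},\, \chi^\mu \otimes \chi^\mu \right\rangle \cdot \left\langle \chi^\gamma,\, \chi^\mu\right\rangle \neq 0,$$
which immediately forces $\gamma = \mu$ and $C^\beta_{\mu,\mu} \neq 0$. Combined with the first condition $C^\lambda_{\beta,\mu} \neq 0$, the containment $\beta \subseteq \lambda$ together with $\lambda$ being a hook forces $\beta = (b, 1^t)$ to itself be a hook of $2\cdot 3^{k-1}$.

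Both skew diagrams $\lambda \smallsetminus \beta$ and $\beta \smallsetminus \mu$ are then disjoint unions of a horizontal strip in row $1$ and a vertical strip in column $1$. As in the proof of Lemma \ref{lem:1}, the reverse-lattice condition on the LR reading word forces every entry of the row strip to equal $1$: the rightmost entry of row $1$ is both the largest of that row (weakly increasing) and the first letter read. The content $\mu$, which contains exactly $3^{k-1}-m$ ones, then pins down the length of the row strip up to at most one unit. A short case analysis on whether the topmost column $1$ entry equals $1$ or $2$ yields at most two candidates for $\beta$ from the condition $C^\lambda_{\beta,\mu} \neq 0$: namely $\alpha$ itself and $\beta' = (2\cdot 3^{k-1} - 2m + 1, 1^{2m-1})$ (the latter requiring $m \geq 1$).

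The main remaining step is to rule out $\beta = \beta'$. For this candidate the skew diagram $\beta' \smallsetminus \mu$ would contain a row $1$ strip of length $3^{k-1} - m + 1$, requiring that many $1$'s, while $\mu$ provides only $3^{k-1} - m$; hence $C^{\beta'}_{\mu,\mu} = 0$ and $\beta'$ fails the second condition. Consequently only $\beta = \alpha$ survives, giving existence and uniqueness of $\psi$. Finally, $C^\lambda_{\alpha,\mu}$ is read off from the unique LR filling of $\lambda \smallsetminus \alpha$ (row $1$ entirely filled with $1$'s and the column entries $2,3,\ldots,m+1$ from top to bottom), yielding the multiplicity $1$ asserted in the statement. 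The only real obstacle is the careful bookkeeping of the LR reverse-lattice condition across the two disconnected pieces of the skew shape, which is routine once the row-entries-are-$1$ observation is in hand.
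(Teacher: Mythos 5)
Your argument is correct and relies on the same basic machinery as the paper's proof (the Littlewood--Richardson rule applied to hook shapes, exploiting the forced-$1$'s in the row-$1$ strip, narrowing to two candidate hooks, then eliminating one). The only difference is the order in which the two conditions are used: the paper first applies the constraint $C^{\beta}_{\mu,\mu}\neq 0$ (via equation \eqref{eq3} of Proposition~\ref{p:hooks23^k}) to obtain the candidate set $\{\alpha,\ (2\cdot 3^{k-1}-(2m+1),1^{2m+1})\}$, and then rules out the extra candidate by comparing leg lengths with $\lambda$; you instead start from $C^{\lambda}_{\beta,\mu}\neq 0$ to obtain the dual candidate set $\{\alpha,\ (2\cdot 3^{k-1}-2m+1,1^{2m-1})\}$, and rule out the extra candidate by showing $C^{\beta'}_{\mu,\mu}=0$. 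Both orderings work, and your bookkeeping (the $3^{k-1}-m+1$ vs.\ $3^{k-1}-m$ count of $1$'s, and the unique filling of $\lambda\smallsetminus\alpha$ giving multiplicity $1$) is accurate.
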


\begin{proof}
To ease the notation we let $A=\fS_{2\cdot 3^{k-1}}\times\fS_{3^{k-1}}$ and $B=(\fS_{3^{k-1}})^3$. 
Let $\phi=\chi^{\nu}\otimes\color{black}\chi^{\mu}$ be an irreducible constituent of $\chi^\lambda\downarrow_A$ and of $(\chi^\mu\otimes\color{black}\chi^\mu\otimes\color{black}\chi^\mu)\uparrow^{A}$.
We want to show that $\nu=\alpha$.
Since $\phi$ lies below $\chi^\lambda$ we deduce that $\nu\in\mathcal{H}(2\cdot 3^k)$.
Moreover, by equation (\ref{eq3}) in the proof of Proposition \ref{p:hooks23^k} we obtain that $\nu\in\{(2\cdot 3^{k-1}-(2m+1),1^{2m+1}), \alpha\}$. 
Suppose that $\nu=(2\cdot 3^{k-1}-(2m+1),1^{2m+1})$ and let $\rho\in\mathcal{H}(3^k)$ be such that $\chi^\rho$ is an irreducible constituent of $\phi\uparrow^{\fS_{3^k}}$. Then we would have that 
$\ell(\rho)\geq 3m+2 > 3m+1=\ell(\lambda)$. This is a contraddiction. 
Hence $\nu=\alpha$ and $\phi=\psi$. 

The second statement follows easily because 
$\left\langle \chi^\lambda\downarrow_{B}, \chi^\mu\otimes\color{black}\chi^\mu\otimes\color{black}\chi^\mu\right\rangle=1$ by Lemma \ref{lem:1}.
\end{proof}

As a corollary we obtain the desired description of the canonical bijection constructed in Theorem \ref{t:main correspondence p^k} for the prime $p=3$. 

\begin{corollary}\label{c:3precise}
Let $k\in\mathbb{N}_{>0}$ and let $\lambda=(3^k-(3m+x),1^{3m+x})\in\mathcal{H}(3^k)$ and $\mu=(3^{k-1}-m,1^m)\in\mathcal{H}(3^{k-1})$, for some $x\in\{0,1,2\}$ and $m\in\{0,1,\ldots,3^{k-1}-1\}$. 
Write $\theta\in\irr{\fS_{3^{k-1}}\wr\fS_3}$ for the extension of
$(\chi^\mu)^{\otimes 3}\in\irr{(\fS_{3^{k-1}})^3}$ described in Lemma \ref{wr-ext}. \color{black}
Then the unique irreducible constituent of degree coprime to $3$ of 
$\chi^\lambda\downarrow_{\fS_{3^{k-1}}\wr \fS_3}$ is 

\begin{eqnarray*}
\chi^*=
\begin{cases}
\theta\cdot(\chi^{(1^3)})^m  &\text{ if } x=0\,,\\

\theta\cdot(\chi^{(2,1)}) &\text{ if } x=1\,,\\

\theta\cdot(\chi^{(1^3)})^{m+1} &\text{ if } x=2\,.\\
\end{cases}
\end{eqnarray*}
\end{corollary}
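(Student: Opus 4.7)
The plan is to use Theorem \ref{t:main correspondence p^k} to pin $\chi^*$ down to at most two candidates and then distinguish between them by restricting further to a carefully chosen intermediate subgroup. Theorem \ref{t:main correspondence p^k} already gives $\chi^* \in \{\theta \cdot \chi^{(3-x,1^x)},\, \theta \cdot \chi^{(x+1,1^{2-x})}\}$. For $x=1$ both candidates collapse to $\theta \cdot \chi^{(2,1)}$ and the claim is immediate, so the remaining work concerns $x \in \{0,2\}$, where $\chi^* = \theta \cdot \chi^\rho$ for some $\rho \in \{(3),(1^3)\}$ and we only need to identify which.

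I will first observe that if $\chi^* = \theta \cdot \chi^\rho$, then restricting to the intermediate subgroup $H_2 := (\fS_{3^{k-1}} \wr \fS_2) \times \fS_{3^{k-1}}$ yields
$$\chi^*\downarrow_{H_2} \;=\; \chi\bigl(\mu,\, \rho\downarrow_{\fS_2}\bigr) \otimes \chi^\mu,$$
because, by the characterisation in Lemma \ref{wr-ext}, $\theta$ restricted to the wreath product on the first two factors coincides with the canonical extension of $(\chi^\mu)^{\otimes 2}$, while $\chi^{(3)}\downarrow_{\fS_2} = \chi^{(2)}$ and $\chi^{(1^3)}\downarrow_{\fS_2} = \chi^{(1^2)}$. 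Therefore it suffices to determine which character $\chi(\mu,\nu_0)$ with $\nu_0 \in \{(2),(1^2)\}$ appears in $\chi^\lambda\downarrow_{H_2}$ above $(\chi^\mu)^{\otimes 3}$.

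Next I will factor this restriction through the Young subgroup $\fS_{2\cdot 3^{k-1}} \times \fS_{3^{k-1}}$. For $x=0$, Lemma \ref{l:231} already shows that the unique constituent of $\chi^\lambda\downarrow_{\fS_{2\cdot 3^{k-1}} \times \fS_{3^{k-1}}}$ lying above $(\chi^\mu)^{\otimes 3}$ is $\chi^\alpha \otimes \chi^\mu$ (with multiplicity one), where $\alpha = (2\cdot 3^{k-1}-2m,1^{2m})$. For $x=2$ a completely analogous argument, combining equation (\ref{eq3}) of Proposition \ref{p:hooks23^k} with the branching rule for hook characters (transparent from the identification of $\chi^{(n-\ell,1^\ell)}$ with $\wedge^\ell\chi^{(n-1,1)}$, which forces the hook-indices $(j,k)$ of any constituent of $\chi^\lambda\downarrow_{\fS_a\times\fS_b}$ to satisfy $j+k\in\{\ell,\ell-1\}$), produces the corresponding statement with $\alpha$ replaced by $\alpha' := (2\cdot 3^{k-1}-(2m+1),1^{2m+1})$.

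Finally, applying Proposition \ref{p:hooks23^k} (with $k$ replaced by $k-1$) to $\chi^\alpha$ (respectively $\chi^{\alpha'}$) will identify the unique $3'$-constituent of its restriction to $\fS_{3^{k-1}} \wr \fS_2$: for $x=0$ this is $\chi(\mu,(2))$ when $m$ is even and $\chi(\mu,(1^2))$ when $m$ is odd, and for $x=2$ the parities of $m$ are swapped. Matching the resulting $\nu_0$ with $\rho\downarrow_{\fS_2}$ will then force $\chi^* = \theta\cdot(\chi^{(1^3)})^m$ for $x=0$ and $\chi^* = \theta\cdot(\chi^{(1^3)})^{m+1}$ for $x=2$, as claimed. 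The main obstacle is the mild one of formulating and justifying the $x=2$ analog of Lemma \ref{l:231}; since all the required ingredients (the hook branching rule and equation (\ref{eq3})) are already in place, this amounts to essentially a bookkeeping verification.
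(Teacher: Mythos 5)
Your proposal is correct and follows essentially the same route as the paper: for $x=1$ the result is immediate from Theorem \ref{t:main correspondence p^k}, and for $x=0$ both you and the paper restrict through the Young subgroup $\fS_{2\cdot 3^{k-1}}\times\fS_{3^{k-1}}$, invoke Lemma \ref{l:231} to pin down $\chi^\alpha\otimes\chi^\mu$, and then apply Proposition \ref{p:hooks23^k} (at level $k-1$) to read off the $\fS_2$-character $\nu_0$ and hence $\rho$. The one place where you diverge slightly is the case $x=2$: you propose to prove a direct analog of Lemma \ref{l:231} (replacing $\alpha$ by $\alpha'=(2\cdot 3^{k-1}-(2m+1),1^{2m+1})$, using the $\wedge^\ell$ description of hook modules to constrain the leg-lengths), whereas the paper disposes of $x=2$ immediately from the $x=0$ case by invoking the bijectivity of $\chi\mapsto\chi^*$ in Theorem \ref{t:main correspondence p^k}: for $p=3$, the two values $x=0$ and $x=2$ share the same candidate pair $\{\theta\cdot\chi^{(3)},\theta\cdot\chi^{(1^3)}\}$, so once $x=0$ is decided, $x=2$ must take the other value. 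Your direct argument for $x=2$ is valid and your formula for $\alpha'$ is correct, but it is a bit more work than the paper's one-line deduction. One small wording issue: Proposition \ref{p:hooks23^k} gives the unique constituent of $\chi^\alpha\downarrow_{\fS_{3^{k-1}}\wr\fS_2}$ lying in the set $C$ (i.e., of the form $\chi(\cdot,\cdot)$), not literally the unique $3'$-constituent, since characters in $D$ are also of $3'$-degree; but since you are only looking at constituents lying over $(\chi^\mu)^{\otimes 2}$, which are automatically in $C$, the conclusion is unaffected.
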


\begin{proof}
If $x=1$ the statement follows immediately from Theorem \ref{t:main correspondence p^k}.
Suppose that $x=0$. By Theorem \ref{t:main correspondence p^k} we know that $\chi^*=\theta\cdot\chi^{\nu}$, 
for some $\nu\in\{(3), (1^3)\}$. 
Adopting the notation of Lemma \ref{l:231}, let $\alpha=(2\cdot 3^{k-1}-2m,1^{2m})\in\mathcal{H}(2\cdot 3^{k-1})$ and let $\psi=\chi^\alpha\otimes\color{black}\chi^\mu \in\mathrm{Irr}_{3'}(A)$, where $A=\fS_{2\cdot 3^{k-1}}\times\fS_{3^{k-1}}$. 
From Proposition \ref{p:hooks23^k} we deduce that $\Delta:=(\sigma\cdot(\chi^{(1^2)})^{m})\otimes \chi^\mu$ is a constituent of $\psi_K$, where $K:=\big(\fS_{3^{k-1}}\wr \fS_2\big)\times \fS_{3^{k-1}}$  and $\sigma$ is the canonical extension of
$\chi^\mu\otimes\chi^\mu$ to its stabilizer given by Lemma \ref{wr-ext}\color{black}.  
Moreover, $\Delta$ is the unique constituent of $\chi^\lambda\downarrow_{K}$ lying over $\theta$, by Lemma \ref{lem:1}. Therefore $\chi^*\downarrow_K=\Delta$ and hence 
$\chi^\nu\downarrow_{\fS_2}=(\chi^{(1^2)})^m$ (as characters of $(\fS_{3^{k-1}}\wr \fS_3)/(\fS_{3^{k-1}})^3\cong \fS_3$). 
This implies that $\chi^\nu=(\chi^{(1^3)})^m$, as desired. 
By Theorem \ref{t:main correspondence p^k}, this also settles the case $x=2$. 
\end{proof}

\subsection{Arbitrary $n\in\mathbb{N}$}\label{sec:77}
In this section we let $n=\sum_{k=1}^ta_k\cdot 3^{n_k}$ be the $3$-adic expansion of $n$, for some $n_1>\cdots >n_t\in\mathbb{N}_0$ and some $a_k\in\{1,2\}$ for all $k\in\{1,2,\ldots, t\}$.
We relax a bit the notation by saying that a partition $\lambda\in\mathrm{Irr}_{3'}(\fS_n)$ or equivalently that $\lambda\vdash_{3'} n$ if the corresponding irreducible character $\chi^\lambda\in\mathrm{Irr}_{3'}(\fS_n)$.

\begin{proposition} \label{prop:Olsson}
 Let $\lambda$ be a partition of $n$. Then $\lambda\in\mathrm{Irr}_{3'}(\fS_n)$ if and only if one of the following conditions holds. 
\begin{itemize}
\item[(i)] $\lambda$ has a unique removable $(a_1\cdot 3^{n_1})$-hook $h_1$, and $\lambda_{(a_1\cdot 3^{n_1})}\in\mathrm{Irr}_{3'}(\fS_{n-a_1\cdot 3^{n_1}})$.
\item[(ii)] $a_1=2$, $\lambda$ has two removable $3^{n_1}$-hooks $h_1, h_2$ and $\lambda_{(3^{n_1})}\in\mathrm{Irr}_{3'}(\fS_{n-2\cdot 3^{n_1}})$.
\end{itemize}
\end{proposition}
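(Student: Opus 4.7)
The plan is to reduce the statement to Theorem \ref{Mac} (Macdonald's criterion) by translating ``removable $3^{n_1}$-hook'' and ``removable $(2\cdot 3^{n_1})$-hook'' of $\lambda$ into conditions on the top layer $T_{n_1}(\lambda)$ of the $3$-core tower. First I would record the translation of the hypothesis via Theorem \ref{Mac}: $\lambda\vdash_{3'} n$ is equivalent to $|T_{n_k}(\lambda)|=a_k$ for $k=1,\ldots,t$ and $|T_j(\lambda)|=0$ for $j\notin\{n_1,\ldots,n_t\}$. In particular $|T_{n_1}(\lambda)|=a_1\in\{1,2\}$ and all higher layers vanish.

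Next I would set up the abacus/quotient dictionary. Using James' abacus with $3$ runners, a removable $3^m$-hook of $\lambda$ corresponds to moving a bead $3^{m-1}$ slots up on its runner, equivalently to a removable $3^{m-1}$-hook of the partition $\lambda^i$ read off that runner. Iterating this on the tower of iterated $3$-quotients, removable $3^m$-hooks of $\lambda$ are in bijection with removable nodes across the $3^m$ partitions at level $m$ of the iterated quotient tower; similarly, removable $(2\cdot 3^m)$-hooks of $\lambda$ (a hook size which still has $3$-part $3^m$) are in bijection with removable $2$-hooks across level-$m$ partitions. The key observation is that when $|T_j(\lambda)|=0$ for all $j>m$, each partition at level $m$ of the iterated quotient is itself a $3$-core, so coincides with the corresponding entry of $T_m(\lambda)$.

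With this dictionary the forward direction splits into cases on how the mass $a_1$ is distributed at level $n_1$. If $a_1=1$, exactly one $3$-core at level $n_1$ equals $(1)$ and the rest are empty, giving a unique removable $3^{n_1}$-hook $h_1$; removal zeros out level $n_1$ without disturbing other layers, so by Theorem \ref{Mac}, $\lambda-h_1\vdash_{3'}(n-3^{n_1})$, which is (i). If $a_1=2$, either level $n_1$ contains a single core of size $2$ (necessarily $(2)$ or $(1,1)$, which are both hooks), producing a unique removable $2$-hook at that level and hence a unique removable $(2\cdot 3^{n_1})$-hook $h_1$ of $\lambda$ while leaving $\lambda - h_1$ with empty level $n_1$ — which is (i); or level $n_1$ has two cores equal to $(1)$ and the rest empty, giving exactly two removable $3^{n_1}$-hooks, no removable $(2\cdot 3^{n_1})$-hook, and $\lambda_{(3^{n_1})}=\lambda - h_1 - h_2$ with empty level $n_1$ — which is (ii). In each subcase the other layers of the tower are untouched, so Theorem \ref{Mac} yields the advertised $3'$-degree property of $\lambda_{(a_1\cdot 3^{n_1})}$.

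For the converse I would simply run the dictionary backwards. Starting from $\lambda_{(a_1\cdot 3^{n_1})}\vdash_{3'}(n-a_1\cdot 3^{n_1})$, the $3$-adic expansion of $n - a_1\cdot 3^{n_1}$ has the same digits as that of $n$ except the $n_1$-digit is zero, so its tower has layer $n_1$ empty and agrees with $T(\lambda)$ on all other relevant layers; re-inserting the hook(s) repopulates layer $n_1$ with exactly $a_1$ nodes, so $|T_{n_1}(\lambda)|=a_1$ and Theorem \ref{Mac} gives $\lambda\vdash_{3'} n$. The main technical point will be establishing the $(2\cdot 3^m)$-hook leg of the dictionary cleanly — in particular, verifying that subcases ``one core of size $2$'' and ``two cores equal to $(1)$'' at level $n_1$ are mutually exclusive and exhaust the case $a_1=2$, which is where the dichotomy (i) vs.\ (ii) originates.
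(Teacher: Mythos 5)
Your proposal is correct and follows essentially the same route as the paper, which disposes of the proposition in a single sentence by invoking Theorem \ref{Mac} (Macdonald's criterion); your write-up simply makes explicit the hook-removal/core-tower dictionary that the paper leaves implicit. In particular, the key observation you flag — that the vanishing of $|T_j(\lambda)|$ for $j>n_1$ forces the level-$n_1$ iterated quotients to be $3$-cores, so that $3^{n_1}$- and $(2\cdot 3^{n_1})$-hooks of $\lambda$ are exactly the nodes and $2$-hooks of the entries of $T_{n_1}(\lambda)$ — is precisely what makes the one-line citation to Macdonald legitimate, and your case split on how the mass $a_1\in\{1,2\}$ is distributed across those cores is the right way to get the dichotomy (i) vs.\ (ii).
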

\begin{proof}
The statement follows directly from Theorem \ref{Mac}, applied to the case where $p=3$. 
\end{proof}

\begin{remark}\label{remark: 3tower}
Let $m\in\mathbb{N}$  be such that $m=\sum_{k=1}^ta_k\cdot 3^{m_k}$ is the $3$-adic expansion of $m$, for some $m_1>m_2\cdots >m_t\geq 0$. 
Let $N\in\mathbb{N}$ be such that $N>m_1$ and let $n=3^N+m$.

Let $\gamma\in\mathcal{P}(m)$ and $\ell\in\{0,1,\ldots, 3^{N}-1\}$. From \cite[Theorem 1.1]{Bess} we know that $\gamma$ has exactly one addable $3^N$-rim hook of leg length $\ell$. Hence $\gamma$ has exactly $3^N$ addable $3^N$-hooks (one for every possible leg-length). 
Let $\lambda_1,\lambda_2,\ldots, \lambda_{3^N}$ be the distinct partitions of $n$ obtained by adding a $3^N$-hook to $\gamma$.

For each $i\in\{1,\ldots, 3^N\}$ the $3$-core tower $T(\lambda_i)$ has the following form. $T_j(\lambda_i)=T_j(\gamma)$ for all $j<N$ and there exists a permutation $\sigma\in\fS_{3^N}$ such that $T_N(\lambda_i)=((\lambda_i)_1^N,\ldots, (\lambda_i)_{3^N}^N)$, where 
$(\lambda_i)^N_{\sigma(i)}=(1)$ and $(\lambda_i)_s^N=\emptyset$ for all $s\in\{1,\ldots, 3^{N}\}\smallsetminus\{\sigma(i)\}$.


In particular, the position of the non-empty $3$-core in the $N$-th layer $T_N(\lambda)$ of the $3$-core tower $T(\lambda)$ uniquely determines the leg-length of the unique removable $3^N$-hook of $\lambda_i$.
\end{remark}

Proposition \ref{prop:Olsson} allows us to associate to each partition $\lambda\in\mathrm{Irr}_{3'}(\fS_n)$ a sequence of partitions $\lambda^*=(\mu_1,\mu_2,\ldots, \mu_{t})\in \mathrm{Irr}_{3'}(\fS_{a_1\cdot 3^{n_1}}\times \cdots \times \fS_{a_t\cdot 3^{n_t}})$, in the following way.

\smallskip

\textbf{(i)} If $\lambda$ has a unique removable $(a_1\cdot 3^{n_1})$-hook $h_1$, then let $\mu_1=h_1$ and set $\lambda_2=\lambda_{(a_1\cdot 3^{n_1})}=\lambda- h_1$.

\smallskip

\textbf{(ii)} If $a_1=2$ and $\lambda$ does not have a removable $(a_1\cdot 3^{n_1})$-hook, then by Proposition \ref{prop:Olsson} it has two removable $3^{n_1}$-hooks $h_1, h_2$. For each $j\in\{1,2\}$ let $\gamma_j:=\lambda- h_j$. Clearly $\gamma_j$ has a unique removable $3^{n_1}$-hook $k_j$.
In this case let $\mu_1\in\mathrm{Irr}_{3'}(\fS_{2\cdot 3^{n_1}})$ be the unique partition hook-generated by $\{k_1,k_2\}$ and let 
$\lambda_2=\lambda_{(3^{n_1})}$.
Notice that this is well defined since $k_1\neq k_2$. This can be observed by applying Remark \ref{remark: 3tower} to the partition $\gamma=\lambda_{(3^{n_1})}$.

\medskip

Either ways we have $\mu_1\in\mathrm{Irr}_{3'}(\fS_{a_1\cdot 3^{n_1}})$ and $\lambda_2\in\mathrm{Irr}_{3'}(\fS_{n-a_1\cdot 3^{n_1}})$, so we can reapply the process to $\lambda_2$ and obtain $\mu_2\in\mathrm{Irr}_{3'}(\fS_{a_2\cdot 3^{n_2}})$ and $\lambda_3\in\mathrm{Irr}_{3'}(\fS_{n-a_1\cdot 3^{n_1}-a_2\cdot 3^{n_2}})$. 
After $t$ iterations of this algorithm we obtain the desired sequence $\lambda^*=(\mu_1,\mu_2,\ldots, \mu_{t})$.

\begin{proposition}\label{prop:arb}
The map $\Gamma:\lambda\mapsto \lambda^*=(\mu_1,\mu_2,\ldots, \mu_{t})$ is a bijection between $\mathrm{Irr}_{3'}(\fS_n)$ and $\mathrm{Irr}_{3'}(\fS_{a_1\cdot 3^{n_1}}\times \cdots \times \fS_{a_t\cdot 3^{n_t}})$.
Moreover, if $\mu_j\in\mathcal{H}(a_j\cdot  3^{n_j})$ for all $j\in\{1,\ldots t\}$ then $\chi^{\lambda^*}$ is an irreducible constituent of $\chi^\lambda\downarrow_{\fS_{a_1\cdot 3^{n_1}}\times \cdots \times \fS_{a_t\cdot 3^{n_t}}}$.
\end{proposition}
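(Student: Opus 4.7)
My plan is to proceed by induction on $t$, the number of nonzero digits in the $3$-adic expansion of $n$. The base case $t=1$ is essentially trivial: when $a_1=1$ every $\lambda\in\mathrm{Irr}_{3'}(\fS_{3^{n_1}})$ is a hook by Theorem \ref{Mac} and is its own unique removable $3^{n_1}$-hook, so $\Gamma$ is the identity; when $a_1=2$ both branches (i) and (ii) of the algorithm again give $\mu_1=\lambda$, the case (ii) assertion being a direct consequence of the uniqueness in Remark \ref{remwelldef}.

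For the inductive step, set $m=n-a_1\cdot 3^{n_1}$. Using the inductive hypothesis applied to $\fS_m$, together with the elementary fact $\mathrm{Irr}_{3'}(G\times H)=\mathrm{Irr}_{3'}(G)\times\mathrm{Irr}_{3'}(H)$, it suffices to exhibit the single-step map $\lambda\mapsto(\mu_1,\lambda_2)$ as a bijection
\[
\mathrm{Irr}_{3'}(\fS_n)\longrightarrow\mathrm{Irr}_{3'}(\fS_{a_1\cdot 3^{n_1}})\times\mathrm{Irr}_{3'}(\fS_m).
\]
I would carry this out through the $3$-core tower, which by Theorem \ref{Mac} completely characterises $3'$-degree partitions. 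Since $n_1>n_j$ for $j\geq 2$, the tower of any $\lambda_2\in\mathrm{Irr}_{3'}(\fS_m)$ is empty at layer $n_1$, and the condition $\lambda\in\mathrm{Irr}_{3'}(\fS_n)$ reduces to $|T_{n_1}(\lambda)|=a_1$ with the lower layers of $T(\lambda)$ recording a $3'$-partition of $m$. Three mutually exclusive shapes are possible for $T_{n_1}(\lambda)$: \textbf{(A)} $a_1=1$ and a single non-empty core equal to $(1)$; \textbf{(B)} $a_1=2$ and a single non-empty core of size $2$ (either $(2)$ or $(1^2)$); \textbf{(C)} $a_1=2$ and two non-empty cores equal to $(1)$ at distinct positions. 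Shapes (A) and (B) correspond to case (i) of Proposition \ref{prop:Olsson}, giving a unique removable $(a_1\cdot 3^{n_1})$-hook $h_1$ with $\mu_1=h_1\in\mathcal{H}(a_1\cdot 3^{n_1})$; shape (C) corresponds to case (ii) and gives $\mu_1\in\mathcal{HG}(2\cdot 3^{n_1})$. This matches exactly the decomposition $\mathrm{Irr}_{3'}(\fS_{2\cdot 3^{n_1}})=\mathcal{H}(2\cdot 3^{n_1})\cup\mathcal{HG}(2\cdot 3^{n_1})$ from Section 4.1.

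The inverse map is built at the level of the $3$-core tower: copy $T(\lambda_2)$ on every layer other than $n_1$, and populate $T_{n_1}$ according to the type of $\mu_1$. If $\mu_1=(a_1\cdot 3^{n_1}-\ell,1^\ell)$ is a hook, its leg length $\ell$ prescribes the position of a single non-empty core in layer $n_1$, by Remark \ref{remark: 3tower} in shape (A) and by an analogous abacus argument in shape (B). If $\mu_1\in\mathcal{HG}(2\cdot 3^{n_1})$ is hook-generated by the pair $\{k_1,k_2\}$, its two defining $3^{n_1}$-hooks prescribe the two positions of size-$1$ cores. Since a partition is uniquely recovered from its $3$-core tower, this gives a unique preimage $\lambda$ in each case. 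The well-definedness of $\mu_1$ in shape (C), namely $k_1\neq k_2$, is forced by Remark \ref{remark: 3tower} together with the distinctness of the two core positions in $T_{n_1}(\lambda)$.

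For the constituent claim, if every $\mu_j$ is a hook then at each step we are in case (i) of Proposition \ref{prop:Olsson} with $\lambda_2=\lambda- h_1$ and $h_1=\mu_1$ a rim hook. The Littlewood--Richardson rule applied to a rim-hook skew shape filled by a hook partition gives $C^{\lambda}_{\lambda_2,\mu_1}\geq 1$, so $\chi^{\mu_1}\otimes\chi^{\lambda_2}$ is a constituent of $\chi^{\lambda}\downarrow_{\fS_{a_1\cdot 3^{n_1}}\times\fS_m}$. Iterating with the inductive hypothesis applied to $\lambda_2$ then produces $\chi^{\lambda^*}$ as a constituent of $\chi^{\lambda}\downarrow_{\fS_{a_1\cdot 3^{n_1}}\times\cdots\times\fS_{a_t\cdot 3^{n_t}}}$. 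The main obstacle I anticipate is the bookkeeping in the $a_1=2$ case, specifically verifying that shapes (B) and (C) partition the fibre $|T_{n_1}(\lambda)|=2$ and map bijectively onto the disjoint pieces $\mathcal{H}(2\cdot 3^{n_1})$ and $\mathcal{HG}(2\cdot 3^{n_1})$; this is where the arithmetic of small $3$-cores enters and where the argument is essentially the crux of the inductive step.
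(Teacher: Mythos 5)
Your proposal is correct and follows the same overall strategy as the paper: induction on $t$, reduction to the single-step map $\lambda\mapsto(\mu_1,\lambda_2)$, a case split driven by Proposition \ref{prop:Olsson} (equivalently, by the shape of $T_{n_1}(\lambda)$), and the Littlewood--Richardson rule for the constituent claim. The one place where your execution diverges is in building the inverse for the hook case. The paper handles cases (i) with $a_1=1$ and $a_1=2$ uniformly by invoking \cite[Theorem 1.1]{Bess}: given $\mu_1=(a_1\cdot 3^{n_1}-\ell,1^\ell)$ and $\gamma$, there is a unique addable $(a_1\cdot 3^{n_1})$-rim hook of leg length $\ell$ on $\gamma$, and that is the preimage. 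You instead route both subcases through the $3$-core tower, which forces you to split the hook case into your shapes (A) and (B), and for (B) you only gesture at an ``analogous abacus argument.'' That argument does go through --- adding a $(2\cdot 3^{n_1})$-hook to $\gamma$ (whose $3^{n_1}$-quotient is empty) amounts to inserting a domino $(2)$ or $(1^2)$ into one quotient component, and the position-plus-shape of that domino is in bijection with leg lengths $\ell\in\{0,\ldots,2\cdot 3^{n_1}-1\}$ --- but this is exactly the point where Bessenrodt's theorem is the cleaner tool, since it removes the need to analyse shape (B) separately at all. Your identification of shape (C) with $\mathcal{HG}(2\cdot 3^{n_1})$ via Remark \ref{remwelldef}, and your verification that $k_1\neq k_2$ via Remark \ref{remark: 3tower}, agree with the paper. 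The constituent claim is handled identically.
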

\begin{proof}
The discussion before the statement of Proposition \ref{prop:arb} shows that each $\lambda\in\mathrm{Irr}_{3'}(\fS_n)$ uniquely determines a sequence 
$(\mu_1,\mu_2,\ldots, \mu_{t})\in \mathrm{Irr}_{3'}(\fS_{a_1\cdot 3^{n_1}}\times \cdots \times \fS_{a_t\cdot 3^{n_t}})$.
We show now that this map has an inverse. This is clearly enough to prove the proposition. 

We proceed by induction on the length $t$ of the $3$-adic expansion of $n$. If $t=1$ then the statement holds trivially.
Suppose that $t\geq 2$. 
Let $(\mu_1,\mu_2,\ldots, \mu_{t})\in \mathrm{Irr}_{3'}(\fS_{a_1\cdot 3^{n_1}}\times \cdots \times \fS_{a_t\cdot 3^{n_t}})$ and let $\gamma\in\mathrm{Irr}_{3'}(\fS_{n-a_1\cdot 3^{n_1}})$ be the partition corresponding to the sequence $(\mu_2,\ldots, \mu_{t})$. 
The partition $\gamma$ exists and it is well defined by inductive hypothesis. 
Since $\mu_1\in\mathrm{Irr}_{3'}(\fS_{a_1\cdot 3^{n_1}})$ we have that $(\mu_1,\gamma)$ identifies a unique partition $\lambda\in\mathrm{Irr}_{3'}(\fS_n)$ as follows (we need to distinguish two cases, depending on the shape of $\mu_1$). 

\smallskip

\textbf{(i)} Suppose that $\mu_1=(a_1\cdot 3^{n_1}-\ell,1^{\ell})\in\mathcal{H}(a_1\cdot 3^{n_1})$. By \cite[Theorem 1.1]{Bess}, we know that $\gamma$ has a unique addable $(a_1\cdot 3^{n_1})$-rim hook $h_1$ of leg length $\ell$. In this case $\lambda$ is defined as the partition obtained by adding $h_1$ to $\gamma$. 
(More formally $\lambda$ is the unique partition of $n$ that has a removable $(a_1\cdot 3^{n_1})$-hook $h_1$ of leg length $\ell$, and such that $\lambda- h_1=\gamma$).

\smallskip

\textbf{(ii)} Suppose that $a_1=2$ and $\mu_1\in\mathrm{Irr}_{3'}(\fS_{a_1\cdot 3^{n_1}})$ is not a hook partition. Then there exists $\{k_1,k_2\}\subseteq\mathcal{H}(3^{n_1})$ , $k_1\neq k_2$ such that $\{k_1,k_2\}$ hook-generates $\mu_1$. For $j\in\{1,2\}$ let $\ell_j$ be the leg-length of $k_j$. Moreover, denote by $\nu_j$ the (unique) partition of $n-3^{n_1}$ obtained by adding (in the unique possible way) a $3^{n_1}$-hook of leg length $\ell_j$ to $\gamma$.
In particular, looking at the $3$-core towers of $\nu_1$ and $\nu_2$ we have that there exist $x_1, x_2\in\{1,2,\ldots, 3^{n_1}\}$ such that:  
$T_{n_1}(\nu_j)=((\nu_j)_1^{n_1},\ldots, (\nu_j)_{3^{n_1}}^{n_1})$, where $(\nu_j)_{x_j}^{n_1}=(1)$ and $(\nu_j)_{i}^{n_1}=\emptyset$ for all $i\in\{1,2,\ldots, 3^{n_1}\}\smallsetminus\{x_j\}$.
From Remark \ref{remark: 3tower}, we have that $x_1\neq x_2$. 

We now define $\lambda$ as the partition of $n$ such that $T_j(\lambda)=T_j(\gamma)$ for all $j<n_1$ and $T_{n_1}(\lambda)=(\lambda_1^{n_1},\ldots, \lambda_{3^{n_1}}^{n_1})$, where $\lambda_i^{n_1}=(1)$ for $i\in\{x_1,x_2\}$ and $\lambda_i^{n_1}=\emptyset$ for $i\in\{1,2,\ldots, 3^{n_1}\}\smallsetminus\{x_1,x_2\}$.

\medskip

In both cases we obtain a canonically defined partition $\lambda\in\mathcal{P}(n)$ such that $\chi^\lambda\in\mathrm{Irr}_{3'}(\fS_n)$ and such that 
$\lambda^*=(\mu_1,\mu_2,\ldots, \mu_{t})$. 

The second statement is proved by induction on $t$. If $t=1$ the statement holds trivially. Let $t\geq 2$ and let $h$ be the unique removable $a_13^{n_1}$-hook of $\lambda$. Then $\lambda^*=(h, \mu_2,\ldots,\mu_t)$, $(\lambda- h)^*=(\mu_2,\ldots,\mu_t)$ and 
$$\left\langle\chi^{\mu_2}\otimes\cdots\otimes\chi^{\mu_t}\uparrow^{\fS_{n-|h|}}, \chi^{\lambda- h}\right\rangle\neq 0,$$
by inductive hypothesis. Moreover, an easy application of the Littlewood-Richardson rule shows that $\left\langle(\chi^{\lambda- h}\otimes \chi^{h})\uparrow^{\fS_{n}}, \chi^\lambda\right\rangle\neq 0$.
This concludes the proof.
\end{proof}

Proposition \ref{prop:arb} together with Theorems \ref{t:2main correspondence p^k} and \ref{t:main23^k} implies Theorem A, which 
we reformulate below: 

\begin{thm}\label{thm:maintotal}
Let $n=\sum_{k=1}^ta_k\cdot 3^{n_k}$ be the $3$-adic expansion of $n$. The map $\Phi$ defined as the composition of the maps $\Gamma$ and $(\Phi_{a_1\cdot 3^{n_1}}\times\cdots\times\Phi_{a_t\cdot 3^{n_t}})$
is a canonical bijection between $\mathrm{Irr}_{3'}(\fS_n)$ and $\mathrm{Irr}_{3'}(\NB_{\fS_n}(P_n))$.
Moreover, if $a_k\neq 2$ for all $k\in\{1,\ldots, t\}$ and $\chi\in\mathrm{Irr}_{3'}(\fS_n)$, then $\Phi(\chi)$ is an irreducible constituent of $\chi\downarrow_{\NB_{\fS_n}(P_n)}$.
\end{thm}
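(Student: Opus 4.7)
The plan is to assemble the three ingredients already developed in this section: the global combinatorial bijection $\Gamma$ from Proposition \ref{prop:arb}, and the two local bijections $\Phi_{3^{n_k}}$ (Theorem \ref{t:2main correspondence p^k}) and $\Phi_{2\cdot 3^{n_k}}$ (Theorem \ref{t:main23^k}). First, Proposition \ref{prop:arb} provides a canonical bijection
$$\Gamma\colon \mathrm{Irr}_{3'}(\fS_n)\longrightarrow \prod_{k=1}^t \mathrm{Irr}_{3'}(\fS_{a_k\cdot 3^{n_k}}),$$
using that the irreducible characters of a direct product are outer tensor products in a canonical way.

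Next I would verify the corresponding product decomposition on the normalizer side:
$$\NB_{\fS_n}(P_n)=\prod_{k=1}^{t}\NB_{\fS_{a_k\cdot 3^{n_k}}}(P_{a_k\cdot 3^{n_k}}).$$
The argument is that $P_n$ has exactly $t$ orbits on $\{1,\ldots,n\}$, of pairwise distinct sizes $a_k\cdot 3^{n_k}$; any element of $\fS_n$ normalizing $P_n$ must permute these orbits, and since the orbit sizes are distinct it must fix them setwise. Hence such a normalizing element belongs to the Young subgroup $\fS_{a_1\cdot 3^{n_1}}\times\cdots\times\fS_{a_t\cdot 3^{n_t}}$, inside which $P_n=P_{a_1\cdot 3^{n_1}}\times\cdots\times P_{a_t\cdot 3^{n_t}}$, so the normalizer splits as the advertised direct product. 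Consequently $\mathrm{Irr}_{3'}(\NB_{\fS_n}(P_n))=\prod_k \mathrm{Irr}_{3'}(\NB_{\fS_{a_k\cdot 3^{n_k}}}(P_{a_k\cdot 3^{n_k}}))$ via outer tensor products. Applying $\Phi_{a_k\cdot 3^{n_k}}$ componentwise (using Theorem \ref{t:2main correspondence p^k} when $a_k=1$ and Theorem \ref{t:main23^k} when $a_k=2$) produces a canonical bijection between the two product sets, and composing with $\Gamma$ yields $\Phi$. Canonicity is preserved at every stage because each constituent bijection has been shown to be choice-free.

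For the final restriction-compatibility assertion, I assume that $a_k=1$ for every $k$. Then each component $\mu_k$ of $\Gamma(\chi^\lambda)=(\mu_1,\ldots,\mu_t)$ is a $3'$-degree character of $\fS_{3^{n_k}}$ and therefore corresponds to a hook partition (since characters of $3'$-degree of $\fS_{3^{n_k}}$ are precisely those labelled by hooks). The second half of Proposition \ref{prop:arb} then guarantees that $\chi^{\lambda^*}$ is an irreducible constituent of the restriction $\chi^\lambda\downarrow_{\fS_{3^{n_1}}\times\cdots\times\fS_{3^{n_t}}}$. On the other hand, Theorem \ref{t:2main correspondence p^k} ensures that $\Phi_{3^{n_k}}(\chi^{\mu_k})$ is an irreducible constituent of $\chi^{\mu_k}\downarrow_{\NB_{\fS_{3^{n_k}}}(P_{3^{n_k}})}$ for each $k$. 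Taking outer tensor products of constituents gives a constituent of the outer tensor product, so transitivity of restriction through the Young subgroup yields that $\Phi(\chi^\lambda)$ is a constituent of $\chi^\lambda\downarrow_{\NB_{\fS_n}(P_n)}$.

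The main obstacle I anticipate is the structural decomposition $\NB_{\fS_n}(P_n)=\prod_k \NB_{\fS_{a_k\cdot 3^{n_k}}}(P_{a_k\cdot 3^{n_k}})$, although this is really a standard consequence of distinctness of the $3$-adic digits. Everything else is bookkeeping. It is also worth noting why the restriction compatibility can fail once some $a_k=2$: the map $\Phi_{2\cdot 3^{n_k}}$ factors through $\Psi$, whose component $\Psi_2$ on the non-hook $3'$-degree characters of $\fS_{2\cdot 3^{n_k}}$ is defined by \emph{induction} from $(\fS_{3^{n_k}})^2$ rather than by restriction, which obstructs the chain of constituents of restrictions and explains the extra hypothesis in the last clause of the theorem.
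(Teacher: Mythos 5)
Your proposal is correct and follows the paper's approach, which presents the theorem as an immediate corollary of Proposition \ref{prop:arb} combined with Theorems \ref{t:2main correspondence p^k} and \ref{t:main23^k}. One small imprecision in your normalizer argument: when $a_k=2$ the subgroup $P_n$ has \emph{two} orbits of size $3^{n_k}$ rather than a single orbit of size $2\cdot 3^{n_k}$, so it has $\sum_k a_k$ orbits rather than $t$; but since any element normalizing $P_n$ permutes orbits of equal size, it still preserves the $t$ Young blocks of sizes $a_k\cdot 3^{n_k}$, so your decomposition $\NB_{\fS_n}(P_n)=\prod_k\NB_{\fS_{a_k\cdot 3^{n_k}}}(P_{a_k\cdot 3^{n_k}})$ and the rest of the argument go through unchanged.
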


Of course, in the previous theorem we embed 
$\fS_{a_1\cdot 3^{n_1}}\times\cdots\times\fS_{a_t\cdot 3^{n_t}}$ naturally as a Young subgroup of $\fS_n$. 
We also note that the characters in $\irrq{\norm{\fS_n}{P_n}}{3}$ are rational-valued, as it can be checked by using Lemma \ref{norm} below, for instance. 

\section{Irreducible characters of $3'$-degree of general linear and unitary groups}\label{sec:8}
Using the McKay bijection previously described for symmetric groups, it is possible to construct a similar canonical bijection for general linear and unitary groups in non-defining characteristic. 
From now on let $q$ be a power of a prime $p\neq 3$. 

\subsection{General linear groups}
For any $n \geq 1$, let $G = \GL_n(q)$ be the finite general linear group, with a natural 
module $V = \mathbb{F}_q^n = \left\langle e_1, \ldots ,e_n \right\rangle_{\mathbb{F}_q}$. As in \cite{KT2}, it is convenient for us to use 
the Dipper-James classification of complex irreducible characters of $G$, as described in \cite{JamesGL}. Namely, every $\chi\in \mathrm{Irr}(G)$ can be written uniquely, up to a permutation of the pairs 
$\{(s_1,\lambda_1),\dots,(s_m,\lambda_m)\}$, in the form 
\begin{equation}\label{gl1}
  \chi = S(s_1,\lambda_1) \circ S(s_2,\lambda_2) \circ \ldots \circ S(s_m,\lambda_m).
\end{equation}
Here, $s_i \in \overline{\mathbb{F}}_q^\times$ has degree $d_i$ over $\mathbb{F}_q$, $\lambda_i \vdash k_i$, 
$\sum^m_{i=1}k_id_i = n$, and the $m$ elements $s_i$ have pairwise distinct minimal polynomials over $\mathbb{F}_q$. 
In particular, $S(s_i,\lambda_i)$ is a primary irreducible character of $\GL_{k_id_i}(q)$. 
The subset $\mathrm{Irr}_{3'}(G)$ is conveniently described by Lemmas (5.3) and (5.4) of \cite{Olsson}. In what follows, we will
fix a basis of $V$ and use this basis to define the field automorphism $F_p : (x_{ij}) \mapsto (x_{ij}^p)$ and 
the transpose-inverse automorphism $\tau : X \mapsto \tw tX^{-1}$; also set $D^+ := \langle F_p,\tau \rangle$. Note that
$\Aut(G) = \Inn(G) \rtimes D^+$. Then, in this and the subsequent sections, our canonical bijections will be 
shown to be equivariant under $D^+$ and also $\Gamma := \Gal(\bar\QQ/\QQ)$.

\subsection{General unitary groups}

Let $G^- := \GU_n(q) = \GU(V)$, the group of unitary transformations of
the Hermitian space $V = \FF^n_{q^2}$. 
The {\it Ennola duality} establishes a procedure to obtain $\Irr(G^-)$ from $\Irr(G^+)$ by a formal change 
$q$ to $-q$, where $G^+ := \GL_n(q)$. We will exhibit it more explicitly for our purposes of studying $\Irr_{3'}(G^-)$.

As in the case of $G^+$, we can identify $G^-$ with its dual group (in the sense of the Deligne-Lusztig theory \cite{C}, \cite{DM}). 
It is easy to see that any semisimple element $s_- \in G^-$ has centralizer of the form
$$\CB_{G^-}(s_-) \cong \GU_{k_1}(q^{d_1}) \times \ldots \times \GU_{k_a}(q^{d_a}) \times \GL_{k_{a+1}}(q^{d_{a+1}}) \times \ldots \times
    \GL_{k_r}(q^{d_r}),$$
where $\sum^r_{i=1}k_id_i  = n$, all $d_1, \ldots,d_a$ are {\it odd}, and all $d_{a+1}, \ldots ,d_r$ are {\it even}. 
In what follows, we use the convention 
$$\GL_k(-q^d) := \GU_k(q^d)$$
for any positive integers $k,d$. Then we can write 
$$\CB_{G^-}(s_-) \cong \GL_{k_1}((-q)^{d_1}) \times \ldots \times \GL_{k_a}((-q)^{d_a}) \times \GL_{k_{a+1}}((-q)^{d_{a+1}}) \times \ldots \times
    \GL_{k_r}((-q)^{d_r});$$
which can then be obtained, replacing $q$ to $-q$, from
$$\CB_{G^+}(s_+) \cong \GL_{k_1}(q^{d_1}) \times \ldots \times \GL_{k_r}(q^{d_r})$$
for some semisimple element $s_+ \in G^+$. Each unipotent character of $\GL_{k_i}((-q)^{d_i})$ is labeled 
by a partition $\lam_i \vdash k_i$; let $\psi^-(\lam_i)$ denote such character and set 
$$\psi^- = \psi^-(\lam_1) \otimes \psi^-(\lam_2) \otimes \ldots \otimes \psi^-(\lam_r).$$
Note, see \cite[Chapter 13]{C}, that if $\psi^+(\lam_i)$ is the unipotent character of $\GL_{k_i}(q^{d_i})$
labeled by the same $\lam_i$, then $\deg \psi^+(\lam_i)$ is a product of some powers of $q$ and some cyclotomic
polynomials in $q$; furthermore, if we make the formal change $q$ to $-q$, then we obtain $\deg \psi^-(\lam_i)$
(up to sign):
$$\deg \psi^-(\lam_i) = \pm \deg \psi^+(\lam_i)|_{q \to -q}.$$
As explained on \cite[p. 116]{FS}, for each $\chi^- \in \Irr(G^-)$, we can find 
a unique $G^-$-conjugacy class of semisimple elements $s_- \in \Irr(G^-)$, a linear character $\hat{s}_-$ of $L_- := \CB_{G^-}(s_-)$ and 
$\psi^-$ as above so that $\chi^- = \pm R^{G^-}_{L_-}(\hat{s}_-\psi^-)$, where $R^{G^-}_{L-}$ is the {\it Lusztig induction}. Similarly, 
we can find a linear character $\hat{s}_+$ of $L_+: =\CB_{G^+}(s_+)$ and form $\chi^+ = R^{G^+}_{L_+}(\hat{s}_+\psi^+) \in \Irr(G^+)$,
with 
$$\psi^+ = \psi^+(\lam_1) \otimes \psi^+(\lam_2) \otimes \ldots \otimes \psi^+(\lam_r).$$
Note that 
$$\chi^+ = S(s'_1,\lam_1) \circ S(s'_2,\lam_2) \circ \ldots \circ S(s'_r,\lam_r)$$
for suitable $s'_i \in \bar\FF_q^\times$ of degree $d_i$ in \eqref{gl1}. Adopting this notation, we will also write $\chi^-$ in the form
\begin{equation}\label{gu1}
  \chi^- = S(s_1,\lam_1) \circ S(s_2,\lam_2) \circ \ldots \circ S(s_r,\lam_r),
\end{equation} 
with the following modification: either $d_j \geq 3$ and $s_j \in \bar\FF_q^\times \smallsetminus \FF_{q^2}$,
or $d_j=1$ and $s_j^{q+1}=1$, or $d_j = 2$, and $s_j \in \FF_{q^2}^\times$ but $s_j^{q+1} \neq 1$ (note that each $s_j$ is 
an eigenvalue of $s_-$ on $V$ corresponding to the factor $\GL_{k_j}((-q)^{d_j})$ of $\CB_{G^-}(s_-)$). In the two latter cases,
we define 
$$\deg^-(s_j) := d_j.$$

According to \cite[(1.3)]{FS},
$$\chi^+(1) = [G^+:L_+]_{p'} \cdot \psi^+(1),~~\chi^-(1) = [G^-:L_-]_{p'} \cdot \psi^-(1).$$
Furthermore, $|G^-|_{p'}$, respectively $|L_-|_{p'}$, can be obtained from $|G^+|_{p'}$, respectively from $|L_+|_{p'}$, 
by replacing $q$ with $-q$ (and possibly multiplying by $-1$ to get a positive number!) It follows that $\chi^-(1)$ is obtained from 
$\chi^+(1)$ by the formal change $q \to -q$. More precisely, there is a monic polynomial $f = \sum^D_{i=0}c_it^i\in \ZZ[t]$ in the variable $t$ (which 
is a product of a power of $t$ and some cyclotomic polynomials in $t$, independent from $q$), such that 
$$\chi^+(1) = f(q),~~\chi^-(1) = \pm f(-q).$$

Now we choose $e^- \in \{1,2\}$ such that the order of $(-q)$ modulo $3$ is $e^-$, and choose
$e^+ \in \{1,2\}$ to be the order of $q$ modulo $3$. First consider the case $e^-=1$, i.e.
$3|(q+1)$. Then $3 \nmid \chi^-(1)$ {\it with} $e^- = 1$ if and only if $3 \nmid f(-q) = \sum_ic_i(-q)^i \equiv \sum_i c_i (\mod 3)$,  
i.e. precisely when $3 \nmid \chi^+(1) = f(q)$ {\it with} $e^+=1$. Next, we consider the case $e^-=2$, i.e.
$3|(q-1)$. Then $3 \nmid \chi^-(1)$ {\it with} $e^- = 2$ if and only if $3 \nmid f(-q) = \sum_ic_i(-q)^i \equiv \sum_i (-1)^ic_i (\mod 3)$,  
i.e. precisely when $3 \nmid \chi^+(1) = f(q)$ {\it with} $e^+=2$. In each of these cases, we can then apply Lemmas (5.3) and (5.4) of
\cite{Olsson} to $\chi^+$ to get the conditions on $d_j = \deg(s'_j) = \deg^-(s_j)$ and $\lam_j$. 
Hence we have obtained (where again $\mu_{(e)}$ and $\mu^{(e)}$ denote
the $e$-core and the $e$-power of a partition $\mu$):

\begin{thm}\label{char-gu}
Write $n=c+me^-$ with $0 \leq c < e^-$, where $e^- \in \{1,2\}$ is the order of $(-q)$ modulo $3$. Then the character $\chi^-$ in
\eqref{gu1} has degree coprime to $3$ if and only if all the following conditions hold:

\begin{enumerate}[\rm(i)]
\item If $n_j := k_jd_j = c_j + m_je^-$ with $0 \leq c_j < e^-$ and $1 \leq j \leq r$, then $\sum^r_{j=1}c_j = c$,
$\sum^r_{j=1}m_j = m$, and 
$3 \nmid (m!/\prod^r_{j=1}m_j!)$.
\item $s_j \in \FF_{q^2}^\times$, and $d_j = \deg^-(s_j)$ divides $e^-$, for $1 \leq j \leq r$. 
\item Set $e'_j := e^-/d_j$ for $1 \leq j \leq r$. Then $d_j|(\lam_j)_{(e'_j)}| = c_j$, $(\lam_j)^{(e'_j)} = (\lam_{j,0}, \ldots ,\lam_{j,e'_j-1})$, with 
$\lam_{j,i} \vt m_{j,i}$ for $0 \leq i \leq e'_j-1$. Next, $\sum^{e'_j-1}_{i=0}m_{j,i} = m_j$ and 
$3 \nmid (m_j!/\prod^{e'_j-1}_{i=0}m_{j,i}!)$.
\end{enumerate} 

\end{thm}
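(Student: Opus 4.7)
The plan is to deduce Theorem \ref{char-gu} directly from Olsson's classification \cite[Lemmas 5.3 and 5.4]{Olsson} of $\Irr_{3'}(\GL_n(q))$ by means of Ennola duality. The machinery has essentially been assembled in the paragraphs preceding the statement: each $\chi^- \in \Irr(G^-)$ parametrized by $\{(s_j,\lam_j)\}_{j=1}^r$ as in \eqref{gu1} is matched with $\chi^+ = S(s'_1,\lam_1)\circ\cdots\circ S(s'_r,\lam_r) \in \Irr(G^+)$, whose components carry semisimple labels $s'_j$ of degree $d_j$ over $\FF_q$ and the same partitions $\lam_j$; and there is a single monic polynomial $f \in \ZZ[t]$ (a product of a power of $t$ and cyclotomic polynomials, independent of $q$) with $\chi^+(1) = f(q)$ and $\chi^-(1) = \pm f(-q)$.

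The next step is to convert the condition $3 \nmid \chi^-(1)$ into the analogous condition for $\chi^+(1)$. Writing $f(t)=\sum_i c_i t^i$, one has $f(-q) \equiv \sum_i c_i \pmod 3$ when $e^-=1$ (so $-q \equiv 1 \pmod 3$), and $f(-q) \equiv \sum_i (-1)^i c_i \pmod 3$ when $e^-=2$ (so $-q \equiv -1 \pmod 3$). In either case, the residue $f(-q) \bmod 3$ coincides with $f(q) \bmod 3$ \emph{computed as if the order of $q$ modulo $3$ were $e^-$}; that is, $\chi^- \in \Irr_{3'}(G^-)$ if and only if the matched $\chi^+$ satisfies Olsson's $3'$-degree criterion for $\GL_n(q)$ with the formal parameter $e^+$ replaced by $e^-$.

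The last step is to unpack Olsson's criterion. Lemma (5.3) of \cite{Olsson} bounds each degree $d_j = \deg(s'_j)$ by $d_j \mid e^+$, while Lemma (5.4) restricts each $\lam_j \vdash k_j$ via an $e'_j$-core and $e'_j$-quotient decomposition with $e'_j := e^+/d_j$, and imposes the multinomial coprimality conditions on the sizes $m_j$ and $m_{j,i}$. Applying this verbatim after the substitutions $e^+ \mapsto e^-$ and $s'_j \leftrightarrow s_j$ (so that $\deg^-(s_j) = d_j$, and noting that $s_j \in \FF_{q^2}^\times$ is already part of the parametrisation \eqref{gu1} of $\Irr(G^-)$) yields exactly the three conditions (i)--(iii) of the theorem. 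The main, but mild, obstacle is bookkeeping: one must carefully track the several formal substitutions, namely $q \to -q$ in the degrees, $s'_j \leftrightarrow s_j$ in the Dipper--James labels (including the edge cases $d_j \in \{1,2\}$ listed just before \eqref{gu1}), and $e^+ \to e^-$ in the $3$-adic analysis. Once these translations are made, the theorem follows as a direct corollary of Olsson's result combined with Ennola duality.
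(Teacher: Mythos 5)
Your proposal is correct and takes essentially the same approach as the paper: Ennola duality matches $\chi^-$ with $\chi^+$ so that $\chi^\pm(1) = \pm f(\pm q)$ for a fixed polynomial $f$, and the residue of $f(-q)$ modulo $3$ depends only on $e^-$, so that $3 \nmid \chi^-(1)$ is equivalent to Olsson's Lemmas (5.3)--(5.4) applied with $e^+$ formally replaced by $e^-$. The paper carries out exactly this computation in the two cases $e^- = 1$ and $e^- = 2$ in the paragraph immediately preceding the theorem, and then reads off conditions (i)--(iii) as you do.
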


Again, in what follows we will fix a basis of $V$ and use this basis to define the field automorphism $F_p : (x_{ij}) \mapsto (x_{ij}^p)$; 
also set $D^-:= \langle F_p \rangle$. Note that
$\Aut(G) = \Inn(G) \rtimes D^-$. Then, in this and the subsequent sections, our canonical bijections will be 
shown to be equivariant under $D^-$ and also $\Gamma = \Gal(\bar\QQ/\QQ)$.

\begin{remark}\label{rem:actionaut}
In both of the cases $\eps = \pm$, the action of $D^\eps$ and $\Gamma$ on the labels of $\Irr(G^\eps_n)$ is explained in the proof
of \cite[Theorem 5.3]{GKNT}. In particular, $S(s,\lam)^\sigma = S(s^\sigma,\lam)$ and 
$S(s,\lam)^{F_p} = S(s^p,\lam)$,  for $\sigma \in \Gamma$ and $F_p \in D^\eps$. Moreover $S(s,\lam)^\tau=S(s^{-1},\lam)$ for $\tau\in D^+$.
(The action of $\Gamma$ on $\overline{\mathbb F}_q^\times$ is defined at the beginning of Section 5 in \cite{GKNT}.)
\end{remark}

\section{Linear and Unitary Groups: A Global Bijection}

Let $q$ be a power of a prime $p\neq 3$ and 
let $G_n^\eps$ be $\GL_n(q)$ for $\eps = +$
and $\GU_n(q)$ for $\eps=-$. When necessary, we will write $G_n^\eps(q)$ instead of the shorter notation $G_n^\eps$. We 
will also interpret the expressions like $q-\eps$ as $q-\eps 1$. 

Let $R^\eps$ be a Sylow $3$-subgroup of $G_n^\eps$. In order to explicitly describe a canonical bijection between $\mathrm{Irr}_{3'}(G_n^\eps)$ and $\mathrm{Irr}_{3'}(\NB_{G_n^\eps}(R^\eps))$, we will adopt a strategy that is similar to the one used in the previous sections, in the case of symmetric groups. Namely, we will identify a convenient subgroup $H^\eps$ such that $\NB_{G_n^{\eps}}(R^\eps)\leq H^\eps\leq G_n^\eps$ and we will first construct canonical bijection between $\mathrm{Irr}_{3'}(G_n^\eps)$ and $\mathrm{Irr}_{3'}(H^\eps)$, and a second one between $\mathrm{Irr}_{3'}(H^\eps)$ and $\mathrm{Irr}_{3'}(\NB_{G_n^\eps}(R^\eps))$. Let 
$V^+ = \FF_q^n = \langle e_1, \ldots ,e_n\rangle$ denote the natural module for $G^+$, and $V ^- = \FF_{q^2}^n$, endowed with
an orthonormal basis $e_1, \ldots ,e_n$, be the natural module for $G^-$. 

\subsection{The case $3\mid (q-\eps)$}\label{q-1}
Suppose $3$ divides $q-\eps$. 
Let $n\in\mathbb{N}$ and let $\Lambda_{3'}(n)$ be the set consisting of all $(n_1,n_2,\ldots, n_r)\vdash n$ such that $$3\nmid \frac{n!}{\prod_{j=1}^r(n_j!)}.$$
From \cite[Section 5]{Olsson} and the discussions in Section \ref{sec:8} we deduce that $\chi\in\mathrm{Irr}(G_n^\eps)$ has degree coprime to $3$ if and only if there exists a partition $(n_1,n_2,\ldots, n_r)\in\Lambda_{3'}(n)$ and pairwise distinct elements $s_1,\ldots, s_r\in\overline{\mathbb{F}}_q^\times$ 
with $s_j^{q-\eps}=1$ for all $j$, 
such that 
$$\chi=S(s_1,\lambda_1) \circ S(s_2,\lambda_2) \circ \ldots \circ S(s_m,\lambda_m),$$
where $\chi^{\lambda_j}\in\mathrm{Irr}_{3'}(\fS_{n_j})$, for all $j\in\{1,\ldots, r\}$. 

Then we define the subgroup
$H^\eps$  to be $\GL_1(q) \wr \fS_n$ 
when $\eps = +$, and 
$\GU_1(q) \wr \fS_n$
when $\eps = -$, 
where in both cases $\fS_n$ denotes the subgroup of permutation matrices in 
$G^\varepsilon_n$ with respect to our fixed basis $e_1,\ldots, e_n$ of $V^\varepsilon$. \color{black} Note that 
$H^\eps$ is invariant under the field automorphism $F_p: (x_{ij}) \mapsto (x_{ij}^p)$ (defined in the given basis of $V^\eps$), 
and also under the transpose-inverse automorphism $\tau:X \mapsto \tw t X^{-1}$ when $\eps = +$. 
Furthermore, both $\Gamma$ and $D^\varepsilon$ act trivially on $\fS_n$. 
\color{black}
The indicated automorphisms $\tau$ (for $\eps=+$) and $F_p$, together
with the inner automorphisms, generate the full group $\Aut(G_n^\eps)$. 
Note that $H^\eps=G^\eps_{1}\wr \fS_n$. An easy application of the theory recalled in Section \ref{pre_ch} shows that $\psi\in\mathrm{Irr}(H^\eps)$ has degree coprime to $3$ if and only if there exists a partition $\rho:=(n_1,n_2,\ldots, n_r)\in\Lambda_{3'}(n)$ and pairwise distinct elements $s_1,\ldots, s_r\in\overline{\mathbb{F}}_q^\times$ 
such that $s_j^{q-\eps}=1$ for all $j$, 
such that 
$$\psi=\big(\widetilde{(S(s_1,(1))}^{\otimes n_1}\cdot\chi^{\lambda_1})
\otimes \widetilde{(S(s_2,(1))}^{\otimes n_2}\cdot\chi^{\lambda_2})
\otimes\cdots\otimes\widetilde{(S(s_r,(1))}^{\otimes n_r}\cdot\chi^{\lambda_r})\big)\big\uparrow_{I_{\rho}}^{H^\eps},$$
where $\lambda_j\vdash_{3'} n_j$, for all $j\in\{1,\ldots, r\}$. 
Here we denoted by $I_\rho$ the subgroup of $H^\eps$ defined by
$I_\rho=G^\eps_{1}\wr (\fS_{n_1}\times\cdots\times\fS_{n_r})=(G^\eps_{1}\wr \fS_{n_1})\times\cdots\times (G^\eps_{1}\wr \fS_{n_r})\leq H^\eps$.
Moreover, for all $1\leq i\leq r$ the character $\widetilde{S(s_i,(1))}^{\otimes n_i}\in\mathrm{Irr}(G^\eps_{1}\wr \fS_{n_i})$ denotes the extension of $S(s_i,(1))^{\otimes n_i}\in\mathrm{Irr}((G^\eps_{1})^{n_i})$, prescribed by Lemma \ref{wr-ext}.

The above discussion shows that the following statement holds. 

\begin{proposition}\label{prop q-1}
Let $\chi=S(s_1,\lambda_1) \circ S(s_2,\lambda_2) \circ \ldots \circ S(s_r,\lambda_r)\in\mathrm{Irr}_{3'}(G_n^\eps).$
Denote by $\chi^\star$ the element of $\mathrm{Irr}_{3'}(H^\eps)$ defined by 
$$\chi^\star=\big(\widetilde{(S(s_1,(1))}^{\otimes n_1}\cdot\chi^{\lambda_1})
\otimes \widetilde{(S(s_2,(1))}^{\otimes n_2}\cdot\chi^{\lambda_2})
\otimes\cdots\otimes\widetilde{(S(s_r,(1))}^{\otimes n_r}\cdot\chi^{\lambda_r})\big)\big\uparrow_{I_{\rho}}^{H^\eps},$$
where $\rho=(|\lambda_1|,\ldots,|\lambda_r|)$. 
The map $\chi\mapsto\chi^\star$ is a canonical bijection between $\mathrm{Irr}_{3'}(G_n^\eps)$ and $\mathrm{Irr}_{3'}(H^\eps)$. 
Moreover, the bijection is both $\Gamma$-equivariant and $D^\eps$-equivariant. 
\end{proposition}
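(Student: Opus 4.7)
The plan is to observe that the bulk of the proposition is already contained in the two enumerations immediately preceding its statement: those paragraphs describe canonical parametrizations of $\mathrm{Irr}_{3'}(G_n^\eps)$ and of $\mathrm{Irr}_{3'}(H^\eps)$ by the \emph{same} combinatorial data, namely unordered collections $\{(s_1,\lambda_1),\ldots,(s_r,\lambda_r)\}$ with pairwise distinct $s_j \in \bar{\mathbb F}_q^\times$ satisfying $s_j^{q-\eps}=1$, and with $\lambda_j \vt n_j$ for some $(n_1,\ldots,n_r) \in \Lambda_{3'}(n)$. The map $\chi \mapsto \chi^\star$ defined in the statement is precisely the map identifying these two parametrizations. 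Consequently, bijectivity requires essentially no further argument; one only needs to confirm that the expression defining $\chi^\star$ is independent of the ordering chosen for the pairs $(s_j,\lambda_j)$, which follows from the commutativity of $\otimes$ and the invariance of induced characters under inner conjugation by permutation matrices inside $H^\eps$.

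The main content of the proof is therefore the verification of $\Gamma$- and $D^\eps$-equivariance. By Remark \ref{rem:actionaut}, any $\gamma \in \Gamma$ or $\gamma \in D^\eps$ acts on $\mathrm{Irr}(G_n^\eps)$ by $S(s_j,\lambda_j) \mapsto S(\gamma(s_j),\lambda_j)$, where $\gamma$ acts on $s_j$ as $s_j^\sigma$ for $\sigma \in \Gamma$, as $s_j^p$ for $F_p$, or as $s_j^{-1}$ for $\tau$ (in the case $\eps = +$); in every case $\lambda_j$ is preserved, and the set $\{s : s^{q-\eps}=1\}$ is stabilized. Next I will check that $H^\eps$ is stable under $D^\eps$ (immediate, since $\tau$ and $F_p$ are defined with respect to the same fixed basis $e_1,\ldots,e_n$ used to realize $\fS_n \leq G^\eps_n$) and that these automorphisms act trivially on the standard $\fS_n$. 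On the base group $(G_1^\eps)^n$ they act diagonally, sending $S(s_j,(1))$ to $S(\gamma(s_j),(1))$. The equivariance assertions at the end of Lemma \ref{wr-ext} then give
$$\bigl(\widetilde{S(s_j,(1))^{\otimes n_j}}\bigr)^{\gamma} \;=\; \widetilde{S(\gamma(s_j),(1))^{\otimes n_j}},$$
while the Gallagher twists by $\chi^{\lambda_j}$ are preserved because each $\chi^{\lambda_j}$ is a rational-valued character of $\fS_{n_j}$ on which $\gamma$ acts trivially. Compatibility with induction from $I_\rho$ to $H^\eps$ then yields $(\chi^\star)^\gamma = (\chi^\gamma)^\star$, as required.

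The only point requiring genuine care, though not really an obstacle, is the bookkeeping that arises when $\gamma$ permutes the list $(s_1,\ldots,s_r)$ nontrivially: the subgroup $I_\rho$ gets conjugated to an isomorphic Young subgroup $I_{\rho'}$ corresponding to a permutation of the $n_j$'s. Since such a conjugation is realized by a permutation matrix in $\fS_n \leq H^\eps$, induction is unaffected and the two reorderings match up consistently. Beyond this, the proof uses no tools outside Lemma \ref{wr-ext}, Remark \ref{rem:actionaut}, and the rationality of symmetric-group characters.
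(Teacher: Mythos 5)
Your proposal is correct and follows essentially the same route as the paper's own (very terse) proof: both rely on the preceding parametrizations of $\mathrm{Irr}_{3'}(G_n^\eps)$ and $\mathrm{Irr}_{3'}(H^\eps)$ by the same combinatorial data, and on Lemma \ref{wr-ext} together with Remark \ref{rem:actionaut} for equivariance. You have simply spelled out the "routine" verifications (well-definedness under reordering, triviality of the $\Gamma$- and $D^\eps$-action on $\fS_n$, rationality of symmetric-group characters) that the paper leaves implicit.
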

\begin{proof}
The discussion in Section \ref{q-1} easily implies that the map $\chi\mapsto\chi^\star$ is a bijection. 
Moreover, using Lemma \ref{wr-ext} together with Remark \ref{rem:actionaut} it is routine to check that the bijection is both $\Gamma$-equivariant and $D^\eps$-equivariant. 
\end{proof}

\subsection{The case $3|(q+\eps)$}\label{q+1}
Let $n=2m$ (we will treat the case $n=2m+1$ separately, see Theorem \ref{odd-even}). 
We will explicitly construct a subgroup $H^\eps = H^\eps_m \cong (\GL_1(q^2)\rtimes_{\eps} C_2)\wr \fS_m$ that is 
invariant under the field automorphism $F_p$, and also under the transpose-inverse automorphism $\tau$ if $\eps = +$.
We start by giving the construction in the case $m = 1$. 
We let $V_1$ be the natural module for $G^\eps_1$.

First let $\eps = -$. Then we fix a basis $(e,f)$ of $V_1$ such that the Hermitian form takes values
$e \circ e = f \circ f = 0$ and $e\circ f = 1$. Then we take $\GL_1(q^2)$ to be the subgroup
$\{\diag(a,a^{-q}) \mid a \in \FF_{q^2}^\times\}$, and the involution $z$ to be swapping $e$ and $f$. Now if $F_p$ is 
defined in this basis $(e,f)$ then $H^\eps_1 = \langle \GL_1(q^2),z \rangle$ is $F_p$-invariant.

Next suppose $\eps = +$. Note that $3|(q+1)$, and so $q=p^f$ for some odd $f$ and $3|(p+1)$. Furthermore, we can
find $\beta \in \FF_{p^2} \smallsetminus \FF_q$ of order $(p+1)\cdot\gcd(2,p-1)$, so that 
$\beta^{p+1} = -1 = \beta^{q+1}$. Now we view $\FF_{q^2}$ as a vector space over $\FF_q$ with basis $(1,\beta)$ and let
$M_x$ denote the matrix of the multiplication $M_x(v) = xv$ by $x \in \FF_{q^2}^\times$, relative to this basis. Note that 
$\gamma:=\beta+\beta^p \in \FF_p$, and $M_{a+b\beta} = \begin{pmatrix}a & b\\b & a+\gam b \end{pmatrix}$ for 
$a,b \in \FF_q$. It follows that $F_p(M_{a+b\beta}) = M_{a^p+b^p\beta}$ and 
$\tau(M_x) = M_{x^{-1}}$. Then we take $z$ to be (the action of) $F_p$, which, by the above discussion, is represented by 
$M_z := \begin{pmatrix}1 & \gam \\0 & -1\end{pmatrix}$. In particular, $M_z$ is $F_p$-fixed and 
$\tau(M_z) = M_{1+\gam\beta}M_z$. Hence, $H^\eps_1 = \langle M_x,M_z \mid x \in \FF_{q^2}^\times \rangle$ is 
$\langle F_p,\tau \rangle$-invariant.

In the general case for any $m$, it suffices to take $V$ to be the sum of $m$ isomorphic copies of $V_1$ (with our fixed basis), and 
set $H^\eps_m = H^\eps_1 \wr \fS_m\cong (\GL_1(q^2)\rtimes_\eps C_2)\wr \fS_m$, 
where $\fS_m$ denotes the subgroup of permutation matrices in $G^\varepsilon_{2m}$ acting on the diagonal $(2\times 2)$-blocks
of the matrices in $(H^\eps_1)^{2m}$.
\color{black}
Note that the action of $ C_2=\langle z\rangle$ on $\GL_1(q^2)$ is given by exponentiation to the $(\eps q)^{th}$ power. 
In particular, the subgroup of the fixed points under this action coincides with $G^\eps_1\leq \GL_1(q^2)$.

An irreducible character $\chi$ of $H^\eps$ has degree coprime to $3$ if and only if there exists a partition $\rho=(m_1,\ldots, m_r)\in\Lambda_{3'}(m)$ such that
$$\chi=\theta_1\otimes\theta_2\otimes\cdots\otimes\theta_r\big\uparrow_{I_\rho}^{H^\eps},$$
where $I_\rho=(\GL_1(q^2)\rtimes_\eps C_2)\wr \fS_\rho=[(\GL_1(q^2)\rtimes_\eps C_2)\wr \fS_{m_1}]\times \cdots\times [(\GL_1(q^2)\rtimes_{\eps} C_2)\wr \fS_{m_r}],$
and $\theta_j:=(\widetilde{\psi_j^{\otimes m_j}})\cdot\chi^{\lambda_j}$. Here $\psi_1,\ldots, \psi_r$ are pairwise distinct irreducible characters of $\GL_1(q^2)\rtimes_\eps C_2$, $\widetilde{\psi_j^{\otimes m_j}}$ denotes the extension of
$\psi_j^{\otimes m_j}$ to $(\GL_1(q^2)\rtimes_\eps C_2)\wr \fS_{m_j}$ given in Lemma \ref{wr-ext}, 
and $\lambda_j$ is a partition of $m_j$ such that $\chi^{\lambda_j}\in\mathrm{Irr}_{3'}(\fS_{m_j})$, for all $j\in\{1,\ldots, r\}$. 
Moreover, we denoted by $\fS_\rho$ the Young subgroup of $\fS_m$ corresponding to $\rho\in\mathcal{P}(m)$.

From now on we denote the group $\GL_1(q^2)\rtimes_\eps C_2$ by $K^\eps$. 
For each $g\in G_1^\varepsilon$, note that $S(g, (1))\in\irr{\GL_1(q^2)}$ is invariant in $K^\varepsilon$. 
Since $\GL_1(q^2)$ is cyclic, it is not difficult to see that each such $S(g, (1))$ has a unique extension 
$\widetilde{S(g, (1))}$ to $K^\varepsilon$ such that it restricts trivially to $C_2$. Thus, 
observe that if $\psi\in\mathrm{Irr}(K^\eps)$ then $\psi$ is of one of the following forms:
\begin{eqnarray*}
\psi=
\begin{cases}
\widetilde{S(g,(1))}\cdot\chi^{\nu}  &\text{for some }\ \nu\in\mathcal{P}(2),\  \ \text{ if } g\in G_1^{\eps }\,,\\

S(g,(1))\big\uparrow_{\GL_1(q^2)}^K &\text{ if } g\in \mathbb{F}_{q^2}^{\times}\smallsetminus G_1^{\eps}\,.\\
\end{cases}
\end{eqnarray*}

\bigskip

\noindent\textbf{Construction of the bijection.}
Let $\chi=\theta_1\otimes\theta_2\otimes\cdots\otimes\theta_r\big\uparrow_{I_\rho}^
{H^\eps}\in\mathrm{Irr}_{3'}(H^{\eps})$ be as above.
Let $\Omega_2$ be the subset of $\mathbb{F}_{q^2}^{\times}$ defined by 
$$\Omega_2=\{g\in\mathbb{F}_{q^2}^{\times}\smallsetminus G_1^{\eps}\ |\ \psi_j=S(g,(1))\big\uparrow_{\GL_1(q^2)}^{K^\eps}, \ \exists\ j\in\{1,\ldots, r\}\}.$$
Similarly we let $\Omega_1=\Omega_1^{(2)}\cup\Omega_1^{(1,1)}$, where for all $\nu\in\mathcal{P}(2)$ we define $\Omega_1^\nu$ as
$$\Omega_1^{\nu}=\{g\in G_1^{\eps}\ |\ \psi_j=\widetilde{S(g,(1))}\cdot\chi^{\nu}, \ \exists\ j\in\{1,\ldots, r\}\}.$$
Of course, here we are using the same notation 
as above for the extension $\widetilde{S(g,(1))}$, for $g\in G_1^{\eps}$.
Observe that the definition of all the sets above depends on $\eps$ and on the irreducible character $\chi$. These dependences are omitted in our notation. 
Moreover, the set $\Omega_2$ is regarded as a set of representatives for the equivalence classes $[g]=\{g,g^{\eps q}\}$. It is also important to observe that the sets $\Omega_1^{(2)}$ and $\Omega_1^{(1,1)}$ are not necessarily disjoint. 

For $g\in\Omega_2$ let $M_2(g)=\{j\in\{1,\ldots, r\}\ |\ \psi_j=S(g,(1))\big\uparrow_{\GL_1(q^2)}^{K^\eps}\}$. Similarly for $\nu\in\mathcal{P}(2)$ and $g\in\Omega_1^{\nu}$ define 
$M_1^\nu(g)=\{j\in\{1,\ldots, r\}\ |\ \psi_j=\widetilde{S(g,(1))}\cdot\chi^{\nu}\}$. Let $M_1(g)=M_1^{(2)}(g)\cup M_1^{(1,1)}(g)$.
Since $\psi_1,\ldots, \psi_r$ are pairwise distinct, we deduce that $|M_2(g)|=1$ for all $g\in\Omega_2$. Hence, we can relabel some of our variables in the following way. For $g\in\Omega_2$, if $M_2(g)=\{j\}$ then denote $m_j$ by $m_g$ and $\lambda_j$ by $\lambda_g$. Clearly we now have that $\lambda_g\vdash m_g$. 
Similarly, for $g\in\Omega_1$, we observe that $1\leq |M_1(g)|\leq 2$ and $|M_1(g)|=2$ if and only if $g\in\Omega_1^{(2)}\cap\Omega_1^{(1,1)}$. 
Hence we can proceed as follows.

If $g\in\Omega_1\smallsetminus(\Omega_1^{(2)}\cap\Omega_1^{(1,1)})$, then $M_1(g)=\{j\}=M_1^{\nu}$ for some $j\in\{1,\ldots, r\}$ and for a unique $\nu\in\mathcal{P}(2)$. In this case we denote $m_j$ by $m_g^\nu$ and $\lambda_j$ by $\lambda_g^\nu$. Moreover we let $m_g^{\nu'}=0$ and $\lambda_g^{\nu'}=\emptyset$.
(Recall that $\nu'$ denotes the conjugate partition of $\nu$).

On the other hand, if $g\in\Omega_1^{(2)}\cap\Omega_1^{(1,1)}$ then there exist distinct $i,j\in\{1,\ldots, r\}$ such that $M_1^{(2)}(g)=\{i\}$ and $M_1^{(1,1)}(g)=\{j\}$. In this case we denote $m_i$ by $m_g^{(2)}$, 
$\lambda_i$ by $\lambda_g^{(2)}$, $m_j$ by $m_g^{(1,1)}$ and  $\lambda_j$ by $\lambda_g^{(1,1)}$. Moreover we let $m_g=m_g^{(2)}+m_g^{(1,1)}$.

\begin{remark}
The relabelling of the integers $m_1,\ldots, m_r$ and of the partitions $\lambda_1,\ldots,\lambda_r$ described above is well defined because the sets $$M_2(g), M_1^{(2)}(g), M_1^{(1,1)}(g)      \ \ \ \text{for all}\ \ \ \ g\in\Omega_1\cup\Omega_2,$$
are pairwise disjoint. 
Moreover, the relabelling is natural and choice-free since each of those sets has size at most $1$ (so no choice is made). 
\end{remark}

With this in mind, we let $\chi^\star$ be the irreducible character of $G_n^\eps$ defined as the following circle product of primary irreducible characters. 
$$\chi^\star=\prod_{g\in\Omega_2}^{\circ}S(g,\lambda_g)\circ\prod_{h\in\Omega_1}^{\circ}S(h,\zeta(h)),$$
where $\zeta(h)$ is the unique partition of $2m_h$ with $2$-core equal to $\emptyset$ and $2$-quotient equal to $(\lambda_h^{(2)},\lambda_h^{(1,1)})$.

\begin{thm}\label{thm q+1}
The map $\chi\mapsto\chi^\star$ is a canonical bijection between
$\Irr_{3'}(H^\eps)$ and $\Irr_{3'}(G_n^\eps)$.
Moreover, the bijection is both $\Gamma$-equivariant and $D^\eps$-equivariant. 
\end{thm}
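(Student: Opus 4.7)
The plan is to verify four properties of the prescribed map $\chi\mapsto\chi^\star$: that $\chi^\star\in\Irr_{3'}(G_n^\eps)$; that the map is bijective; that the construction involves no choices; and that it is equivariant under both $\Gamma$ and $D^\eps$. The key combinatorial tool is the classical $2$-quotient bijection between partitions of $2k$ with empty $2$-core and ordered pairs of partitions of total size $k$.

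First, membership of $\chi^\star$ in $\Irr_{3'}(G_n^\eps)$ will be checked against Theorem \ref{char-gu} (and its linear analog, \cite[Lemma 5.4]{Olsson}) with $e^\eps=2$. Since $n=2m$, we have $c=0$, so each primary factor of $\chi^\star$ must satisfy $c_j=0$: this holds for $g\in\Omega_2$ because $d_g=2$, and for $h\in\Omega_1$ because $\zeta(h)$ has empty $2$-core by construction. The $e'_j$-quotient condition is automatic when $d_j=2$, and for $d_j=1$ it reduces to requiring that $\zeta(h)^{(2)}=(\lambda_h^{(2)},\lambda_h^{(1,1)})$ consists of $3'$-partitions of $m_h^{(2)}$ and $m_h^{(1,1)}$, which holds by construction. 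The overall numerical condition $3\nmid m!/\prod m_j!$ on the $\chi^\star$ side is then equivalent, via the factorization of multinomial coefficients modulo $3$, to the combination of $\rho\in\Lambda_{3'}(m)$ on the $\chi$ side with the clauses $3\nmid m_h!/(m_h^{(2)}!\,m_h^{(1,1)}!)$ for each $h\in\Omega_1$; the latter are precisely what forces each $\zeta(h)$ to be a $3'$-partition.

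Next, the inverse map is constructed by reversing the relabelling. Given $\chi^\bullet=\prod^\circ S(t_j,\mu_j)\in\Irr_{3'}(G_n^\eps)$, each $t_j$ with $d_j=2$ contributes a unique $g\in\Omega_2$ with $\lambda_g=\mu_j$, while each $t_j$ with $d_j=1$ contributes a unique $h\in\Omega_1$ whose $2$-quotient of $\mu_j=\zeta(h)$ recovers the pair $(\lambda_h^{(2)},\lambda_h^{(1,1)})$. An empty component $\lambda_h^{(\nu)}$ indicates that $\widetilde{S(h,(1))}\cdot\chi^\nu$ does not occur among the $\psi_i$; both factors occur exactly when $h\in\Omega_1^{(2)}\cap\Omega_1^{(1,1)}$. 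Canonicity is immediate since $|M_2(g)|,|M_1^{(2)}(h)|,|M_1^{(1,1)}(h)|\le 1$ and since the $2$-quotient is a canonical bijection.

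Finally, for the equivariance claim, Remark \ref{rem:actionaut} ensures that $\Gamma$ and $D^\eps$ act on the primary labels $S(g,\mu)$ only through the semisimple part $g$, fixing the partition label. The analogous statement on the $H^\eps$ side follows from Lemma \ref{wr-ext} (extensions in wreath products are equivariant for both $\Gal(\bar\QQ/\QQ)$ and the ambient automorphism group) combined with the fact that each $\chi^\nu\in\irr{\fS_2}$ is rational-valued and automorphism-fixed. Since the classes $[g]=\{g,g^{\eps q}\}$ indexing $\Omega_2$ are preserved and $\zeta(h)$ depends only on the action-invariant partition data, the bijection intertwines both actions. The main obstacle is the careful bookkeeping in the mixed case $h\in\Omega_1^{(2)}\cap\Omega_1^{(1,1)}$, where two distinct characters of $K^\eps$ attached to the same $h$ on the $\chi$ side collapse into the single primary factor $S(h,\zeta(h))$ on the $\chi^\star$ side; tracking this merger is what makes both the $3'$-degree verification and the inversion argument nontrivial.
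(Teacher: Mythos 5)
Your proposal is correct and takes essentially the same approach as the paper: both verify $\chi^\star\in\Irr_{3'}(G_n^\eps)$ by checking the factor-by-factor conditions of \cite[Lemma 5.4]{Olsson} (for $\eps=+$) and Theorem \ref{char-gu} (for $\eps=-$) together with the multinomial condition inherited from $\rho\in\Lambda_{3'}(m)$, both argue bijectivity by observing that the relabelling is choice-free and reversible, and both derive $\Gamma$- and $D^\eps$-equivariance from Lemma \ref{wr-ext} and Remark \ref{rem:actionaut}. You supply somewhat more explicit detail on the inverse construction and the bookkeeping in the merged case $h\in\Omega_1^{(2)}\cap\Omega_1^{(1,1)}$, but no new ideas are introduced.
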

\begin{proof}
We keep the notation introduced in Section \ref{q+1}. Let $\Omega=\Omega_1\cup\Omega_2$.
Let $k=|\Omega|$. Then $\chi^\star=S_1\circ\cdots\circ S_k$, where for all $\ell\in\{1,\ldots, k\}$ we have that $S_\ell$ is a primary irreducible $3'$-character of $G_{2m_g}^\eps$, for some $g\in\Omega$.
This follows by observing that for $g\in\Omega_2$ we have that $3\nmid\chi^{\lambda_g}(1)$. On the other hand, for $h\in\Omega_1$ we have that $\zeta(h)$ has $2$-core equal to $\emptyset$ and $2$-quotient equal to $(\lambda_1, \lambda_2)$, where $$3\nmid\chi^{\lambda_j}(1) \ \ \ \text{and} \ \ \ 3\nmid\frac{m_h!}{\prod_{j\in\{1,2\}}|\lambda_j|!}.$$
Hence in both cases $S_\ell$ satisfies condition (2) of \cite[Lemma 5.4]{Olsson} (for $\eps=+1$) or the hypothesis of Theorem \ref{char-gu} (for $\eps=-1$).

It is easy to see that the partition underlying the sequence $(m_g)_{g\in\Omega}$ lies in $\Lambda_{3'}(m)$. For example notice that 
$$\fS_{m_1}\times\cdots\times \fS_{m_r}\leq \mathop{\times}\limits_{g\in\Omega}\fS_{m_g}\leq \fS_m.$$
Hence for $\eps=+1$ and for $\eps=-1$, we have that $\chi^\star\in\mathrm{Irr}_{3'}(G_n^\eps)$, respectively by \cite[Lemma 5.3]{Olsson} and by Theorem \ref{char-gu}. 

The map is a bijection since every step in its definition is choice-free, unique and reversible. It is therefore easy to construct its inverse. 
Equivariance with respect to the actions of $\Gamma$ and $D^\eps$ follows again from Lemma \ref{wr-ext} together with Remark \ref{rem:actionaut} (recall that both $\Gamma$ and $D^\eps$ fix every permutation matrix\color{black}). 
\end{proof}

\smallskip
The passage from $2m$ to $2m+1$ is given in the following statement.

\begin{thm}\label{odd-even}
Let $m \geq 1$ be an integer, $\eps = \pm$, and let $q \equiv -\eps (\mod 3)$ be a prime power. 
\begin{enumerate}[\rm(i)]
\item If we embed $G^\eps_1(q) \times G^\eps_{2m}(q)$ as a standard subgroup of $G^\eps_{2m+1}(q)$ and choose a 
Sylow $3$-subgroup $P$ of the direct factor $G^\eps_{2m}(q)$, then $P \in \Syl_3(G^\eps_{2m+1}(q))$ and 
$$\NB_{G^\eps_{2m+1}(q)}(P) = G^\eps_1(q) \times \NB_{G^\eps_{2m}(q)}(P).$$ 

\item There is a canonical bijection between 
$\Irr_{3'}(G^\eps_{2m+1}(q))$ and $\Irr(G^\eps_1(q)) \times \Irr_{3'}(G^\eps_{2m}(q))$. 
\end{enumerate}
\end{thm}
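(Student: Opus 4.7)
\emph{Part (i).} The plan is to start from a direct order count. Since $3 \mid (q+\eps)$, we have $3 \nmid q-\eps = |G^\eps_1(q)|$, and hence $|G^\eps_{2m+1}(q)|_3 = |G^\eps_{2m}(q)|_3$, so $P \in \Syl_3(G^\eps_{2m+1}(q))$. To identify the normaliser, I would show that $V_1$ is $P$-intrinsic by proving $V_1 = \CB_V(P)$: in the explicit model $H^\eps_m = H^\eps_1 \wr \fS_m$ of Section~\ref{q+1}, each direct factor of a Sylow $3$-subgroup of $G^\eps_{2m}(q)$ sits inside a copy of $\GL_1(q^2)$ and acts on its $2$-dimensional constituent of $V_{2m}$ by multiplication by a nontrivial $3$-power root of unity, so without nonzero fixed vectors on $V_{2m}$. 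Since $|P|$ is coprime to $\Char\FF_q$, Maschke yields a canonical decomposition $V = V_1 \oplus W$, where $W$ is the sum of the nontrivial $P$-isotypic components; the $P$-stable subspace $V_{2m}$ has trivial fixed subspace, so $V_{2m} = W$ (in the unitary case this is equally forced by $V_{2m} = V_1^\perp$). Consequently $\NB_{G^\eps_{2m+1}(q)}(P)$ preserves both summands, hence is contained in $G^\eps_1(q) \times G^\eps_{2m}(q)$; as $G^\eps_1(q)$ centralises $G^\eps_{2m}(q)$ and therefore $P$, intersection with the normaliser of $P$ gives the required equality.

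\emph{Part (ii).} For the character bijection I plan to use the Dipper--James parametrisation from Section~\ref{sec:8}. Under the hypothesis $3 \mid (q+\eps)$ one has $e^\eps = 2$, and writing $n = 2m+1 = c + me^\eps$ forces $c = 1$. Theorem~\ref{char-gu} (and the analogous \cite[Lemma 5.4]{Olsson} when $\eps = +$) then ensures that for every $\chi = S(s_1,\lam_1) \circ \cdots \circ S(s_r,\lam_r) \in \Irr_{3'}(G^\eps_{2m+1}(q))$ exactly one index $j$ (take $j=1$) satisfies $c_j = 1$; for this $j$ we have $d_1 = 1$ (so $s_1 \in G^\eps_1(q)$) and $\lam_1$ has $2$-core $(1)$. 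My proposed map is
$$
\chi \ \longmapsto\ \bigl(S(s_1,(1)),\ \chi'\bigr),
$$
where $\chi' := S(s_1,\lam'_1) \circ S(s_2,\lam_2) \circ \cdots \circ S(s_r,\lam_r)$ and $\lam'_1$ is the unique partition of $2m_1$ with empty $2$-core and the same $2$-quotient as $\lam_1$, with the convention that the factor $S(s_1,\lam'_1)$ is dropped if $\lam'_1 = \emptyset$. The parametric conditions of Theorem~\ref{char-gu} for $n = 2m$ (now with $c = 0$) are inherited directly from those for $\chi$, so $\chi' \in \Irr_{3'}(G^\eps_{2m}(q))$; and since $3 \nmid |G^\eps_1(q)|$ every character of $G^\eps_1(q)$ has $3'$-degree, so $\Irr(G^\eps_1(q)) = \Irr_{3'}(G^\eps_1(q))$. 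The inverse takes $(\alpha,\chi')$ with $\alpha = S(s,(1))$ and, depending on whether $s$ labels some factor $S(s,\mu)$ of $\chi'$ (whose $2$-core is necessarily empty) or does not, either replaces $\mu$ by the unique partition with $2$-core $(1)$ and the same $2$-quotient, or adjoins the primary factor $S(s,(1))$.

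\emph{Equivariance and main obstacle.} To complete the proof I would verify that the bijection commutes with $\Gamma = \Gal(\bar\QQ/\QQ)$ and with $D^\eps$. By Remark~\ref{rem:actionaut}, both actions fix the partition labels $\lam_i$ and permute the semisimple labels $s_i$, so equivariance is automatic: my construction only operates on the partition data, via a canonical $2$-core/$2$-quotient reparametrisation, and reorganises the semisimple labels in a way that commutes with arbitrary permutations of $\bar\FF_q^\times$. The main subtlety is canonicity: one must invoke Theorem~\ref{char-gu}(i) to see that exactly one $c_j$ equals $1$ so that the ``odd factor'' $S(s_1,\lam_1)$ is uniquely determined, and one needs the $2$-core/$2$-quotient dictionary to furnish a natural bijection between partitions of $1+2m_1$ with $2$-core $(1)$ and partitions of $2m_1$ with empty $2$-core. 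Once these two facts are in place, well-definedness, invertibility and equivariance all follow routinely.
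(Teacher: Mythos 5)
Your proposal is correct and follows essentially the same route as the paper. Part (i) is handled as the paper does: $3\nmid(q-\eps)$ gives $|G^\eps_{2m+1}|_3 = |G^\eps_{2m}|_3$, and the observation that $P$ has no nonzero fixed vectors on $V_{2m}$ (equivalently, that all composition factors of $P$ on $V_{2m}$ have even dimension) forces $\NB_{G^\eps_{2m+1}(q)}(P)$ to preserve the decomposition $V_1\oplus V_{2m}$. In part (ii) the map you build is literally the map in the paper: isolate the unique factor $S(s_1,\lam_1)$ with odd ``size'' $k_1d_1$, extract $S(s_1,(1))\in\Irr(G^\eps_1(q))$, and replace $\lam_1$ (with $2$-core $(1)$) by the partition of $2m_1$ with empty $2$-core and the same $2$-quotient. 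The one place you deviate is in how bijectivity is closed off: the paper carefully proves surjectivity of $\Theta$ (checking $3\nmid\alpha(1)$, respectively $3\nmid\beta(1)$, in the two cases) and then appeals to the Fong--Srinivasan verification of McKay for $G^\eps_k(q)$ together with part (i) to conclude that the two sides have the same cardinality, so $\Theta$ is bijective; you instead propose to exhibit the two-sided inverse directly, which is self-contained and avoids citing \cite{FS}, but you leave the verification that the inverse actually lands in $\Irr_{3'}(G^\eps_{2m+1}(q))$ as ``routine.'' That step is in fact the content of step (c) of the paper's proof (via \cite[Lemmas (5.3),(5.4)]{Olsson} and Theorem \ref{char-gu}) and is not entirely trivial, so you should spell it out, but there is no gap in your underlying idea.
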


\begin{proof}
(a) To prove (i), it suffices to note that $3 \nmid [G^\eps_{2m+1}(q):G^\eps_{2m}(q)]$, and that all composition factors of 
$P$ on the natural module $U =\FF^{2m}$ of $G^\eps_{2m}(q) = G^\eps(U)$ are of even dimension, where 
$\FF= \FF_q$ if $\eps = +$ and $\FF = \FF_{q^2}$ if $\eps = -$. Hence $P \in \Syl_3(G^\eps(V))$
for $V := L \oplus U$ with $L := \FF$, and $\NB_{G^\eps(V)}(P)$ preserves this decomposition of $V$.

\smallskip
(b) Express any $\chi \in \Irr_{3'}(G^\eps_{2m+1}(q))$ in the form \eqref{gl1} or \eqref{gu1}, and apply \cite[Lemma (5.3)]{Olsson} in 
the case $\eps = +$. In the case $\eps = -$, apply Theorem \ref{char-gu}. Then there is 
a unique $i$ such that $k_id_i$ is odd. Relabeling the $k_jd_j$ if necessary, we may assume that $2 \nmid k_1d_1$. Next, applying
\cite[Lemma (5.4)]{Olsson} and Theorem \ref{char-gu} to $S(s_1,\mu) \in \Irr_{3'}(G^\eps_{k_1d_1}(q))$ with $\mu := \lam_1$, we see that
$d_1=1$, $s_1^{q-\eps}=1$, $\mu_{(2)} = (1)$, $\mu^{(2)} = (\mu_0,\mu_1)$, $\mu_i$ is a $3'$-partition of $m_i$ for $i = 0,1$,
$m_0+m_1 = (k_1d_1-1)/2$, and $3 \nmid \binom{(k_1d_1-1)/2}{m_1}$. Now by \cite[5.16]{R} 
there is a unique $\nu \vdash (k_1d_1-1)$ such that $\nu_{(2)} = \emptyset$ and $\nu^{(2)} = \mu^{(2)}$. Applying 
\cite[Lemma (5.4)]{Olsson} and Theorem \ref{char-gu} again, we see that $S(s_1,\nu) \in \Irr_{3'}(G^\eps_{k_1d_1-1}(q))$. It follows by the 
parametrization of $\Irr(\GL_{2m}(q))$ given in \eqref{gl1} and of $\Irr(\GU_{2m}(q))$ given in \eqref{gu1} that
\begin{equation}\label{gl2}
  \chi^* := S(s_1,\nu) \circ S(s_2,\lam_2) \circ \ldots \circ S(s_m,\lam_m) \in \Irr_{3'}(G^\eps_{2m}(q)),
\end{equation}  
where the fact that $3 \nmid \chi^*(1)$ follows from \cite[Lemma (5.3)]{Olsson} and Theorem \ref{char-gu}.
Note that the term $S(s_1,\nu)$ in \eqref{gl2} disappears if and only if $k_1d_1=1$.

Now we can define the following canonical map
$$\Theta: \Irr_{3'}(G^\eps_{2m+1}(q)) \to \Irr(G^\eps_1(q)) \times \Irr_{3'}(G^\eps_{2m}(q)),~~\chi \mapsto (S(s_1,(1)),\chi^*).$$

\smallskip
(c) Conversely, consider any pair $(S(t,(1)),\chi^*) \in  \Irr(G^\eps_1(q)) \times \Irr_{3'}(G^\eps_{2m}(q))$ with 
$\chi^*$ written in the form \eqref{gl2}; in particular, $t^{q-\eps}=1$. If $t$ is different from all $s_i$ in \eqref{gl2},
then we define 
$$\al := S(t,(1)) \circ S(s_1,\nu) \circ S(s_2,\lam_2) \circ \ldots \circ S(s_m,\lam_m) \in \Irr(G^\eps_{2m+1}(q)).$$
Note that $\al(1) = (q^{2m+1}-\eps)\chi^*(1)$ is coprime to $3$, and $\Theta(\al) = (S(t,(1)),\chi^*)$. 

In the remaining case, there is precisely one $s_i$ in \eqref{gl2} that is equal to $t$; without loss we may assume 
that $s_1 = t$. By \cite[Lemma (5.3)]{Olsson} and Theorem \ref{char-gu} applied to $\chi^*$, $m':= \deg(s_1)|\nu|$ is even, and 
$3 \nmid \deg S(s_1,\nu)$. By \cite[Lemma (5.4)]{Olsson} and Theorem \ref{char-gu} applied to $S(s_1,\nu)$,  
$\nu_{(2)} = \emptyset$, $\nu^{(2)} = (\nu_0,\nu_1)$, $\nu_i$ is a $3'$-partition of $m'_i$ for $i = 0,1$,
$m'_0+m'_1 = m'/2$, and $3 \nmid \binom{m'/2}{m'_1}$. Again by \cite[5.16]{R}, there is a unique 
$\gam \vdash (m'+1)$ such that $\gam_{(2)} = (1)$ and $\gam^{(2)} = \nu^{(2)}$. Now
$$\beta := S(s_1,\gam) \circ S(s_2,\lam_2) \circ \ldots \circ S(s_m,\lam_m) \in \Irr(G^\eps_{2m+1}(q)).$$
Using \cite[Lemma (5.4)]{Olsson} and Theorem \ref{char-gu}
we see that $3 \nmid \deg S(s_1,\gam)$. Then applying \cite[Lemma (5.3)]{Olsson} and Theorem \ref{char-gu} 
to both $\chi^*$ and $\beta$,
we can conclude that $3 \nmid \beta(1)$, and that $\Theta(\beta) =  (S(t,(1)),\chi^*)$.

Thus $\Theta$ is surjective. On the other hand, 
since the McKay conjecture holds for $G^\eps_k(q)$ for any $k$ (see \cite{FS}), (i) implies that 
two sets  $\Irr(G^\eps_1(q)) \times \Irr_{3'}(G^\eps_{2m}(q))$ and $\Irr_{3'}(G^\eps_{2m+1}(q))$
have the same cardinality. 
Consequently, $\Theta$ is a bijection, and we have proved (ii).
\end{proof}

\section{Linear and Unitary Groups: A Local Bijection}
As usual, we say that a partition $\lam \vdash m$ is a {\it $3'$-partition of $m$} (and write $\lam \vt m$) if and only if $3 \nmid \chi^\lam(1)$.

We will make use of the following hypothesis, which is fulfilled by the results of \S\S3, 4:

(*) {\it For every $m = \sum^t_{i=0}m_i$ with $m_i = a_i3^i$, $0 \leq a_i \leq 2$, we have constructed a canonical bijection
$$\Phi_m:\Irr_{3'}(\fS_m) \to \Irr_{3'}(\NB_{\fS_m}(R_m)),~~\chi^\lam \mapsto \lam^\sharp,$$
which is compatible with breaking $m$ into its $3$-adic pieces $m_i$ in the sense of Theorem \ref{thm:maintotal}}, when we choose
$R_m \in \Syl_3(\fS_m)$ to be $R_m = R_{m_1} \times \ldots \times R_{m_t}$ with 
$R_{m_i} \in \Syl_3(\fS_{m_i})$ and $\fS_{m_1} \times \ldots \times \fS_{m_t}$ is naturally
embedded in $\fS_m$ as a Young subgroup.

If $R \leq \fS_m$, then let $i^R$ denote the $R$-orbit of $1 \leq i \leq m$.
The proof of the following statement is straightforward and will be omitted:

\begin{lemma}\label{norm}
Let $p$ be a prime, and let $K$ be a finite group with $Q \in \Syl_p(K)$. Let $H := K \wr \fS_m$, and 
$R \in \Syl_p(\fS_m)$ so that $P := Q \wr \fS_m \in \Syl_p(H)$. Then 
$$\NB_H(P) = \{ (x_1, \ldots,x_m; z) \mid x_i \in \NB_K(Q),~z \in \NB_{\fS_m}(R),~x_{i}^{-1}x_j \in Q
   \mbox{ if }i^R = j^R\}.$$  
\end{lemma}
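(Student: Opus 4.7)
The plan is to unpack the semidirect product $H = K^m \rtimes \fS_m$ and translate the condition $hPh^{-1}=P$ into three explicit conditions on the coordinates of $h=(x_1,\ldots,x_m;z)$, using that $P = Q^m \rtimes R$.

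First, since $K^m \lhd H$, any $h \in \NB_H(P)$ must stabilize the intersection $P \cap K^m = Q^m$ and also induce an automorphism of $H/K^m \cong \fS_m$ that stabilizes the image of $P$, which is $R$. The second requirement gives $z \in \NB_{\fS_m}(R)$ immediately. For the first, I would compute, for an arbitrary $(q_1,\ldots,q_m;1) \in Q^m$,
\[ h\,(q_1,\ldots,q_m;1)\,h^{-1} \;=\; \bigl(x_i\, q_{z^{-1}(i)}\, x_i^{-1}\bigr)_{i=1}^m;\ 1). \]
For this to lie in $Q^m$ for every choice of the $q_j$'s, each $x_i$ must normalize $Q$ in $K$.

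Next, to extract the third condition, I would compute $h\,(1;\sigma)\,h^{-1}$ for an arbitrary $\sigma \in R$. A direct computation in the wreath product gives the permutation part $z\sigma z^{-1}$ (which lies in $R$ by the first step) and base-group coordinates of the form $x_i\, x_{z\sigma^{-1}z^{-1}(i)}^{-1}$. Asking this to lie in $Q^m R$ — equivalently, asking the base-group coordinates to lie in $Q$ — and using that $\pi := z\sigma z^{-1}$ ranges over all of $R$ as $\sigma$ does, translates into: for every $i$ and every $\pi \in R$, one has $x_i x_{\pi^{-1}(i)}^{-1} \in Q$. Because each $x_j \in \NB_K(Q)$, this is equivalent to $x_i^{-1}x_j \in Q$ whenever $j$ lies in the same $R$-orbit as $i$, which is exactly the stated condition.

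Finally, the converse is the same computation read in reverse: if $h = (x_1,\ldots,x_m;z)$ satisfies the three conditions, then the displayed formulas above show that $hPh^{-1} \subseteq P$, whence $hPh^{-1}=P$ by finiteness. The only delicate point is fixing a convention for the multiplication in $H$ (i.e., whether $\fS_m$ acts on the base group on the left or right, and the resulting formula for $(x;z)^{-1}$), and then tracking indices consistently through the two conjugation computations. Once that bookkeeping is in place the statement follows directly, which is presumably why the authors left the proof to the reader.
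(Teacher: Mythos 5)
The paper explicitly omits this proof, stating only that it is ``straightforward.'' Your direct verification is correct and is surely the intended argument. You correctly extract $z \in \NB_{\fS_m}(R)$ by passing to $H/K^m$, then use $h(Q^m)h^{-1} = Q^m$ to get $x_i \in \NB_K(Q)$ for each $i$, and then analyze $h(1;\sigma)h^{-1}$ for $\sigma \in R$: its base coordinates $x_i x_{\pi^{-1}(i)}^{-1}$ with $\pi = z\sigma z^{-1}$ must lie in $Q$, and since $z$ normalizes $R$, letting $\pi$ range over $R$ yields exactly the condition $x_i^{-1} x_j \in Q$ whenever $i^R = j^R$ (the passage between $x_i x_j^{-1} \in Q$ and $x_i^{-1} x_j \in Q$ being valid because the $x_i$ normalize $Q$). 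The reverse inclusion follows as you say, since $h Q^m h^{-1} \subseteq Q^m$ and $h(1;\sigma)h^{-1} \in P$ together give $hPh^{-1} \subseteq P$, hence equality by finiteness. The conventions and index bookkeeping in your displayed formulas are consistent, so no gap remains.
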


\begin{thm}\label{local}
Let $K$ be a finite group with a normal, abelian $Q \in \Syl_3(K)$. Fix any 
$m = \sum^t_{j=0}m_j \geq 1$ with $m_j = a_j3^j$, $0 \leq a_j \leq 2$,
and $R \in \Syl_3(\fS_m)$. Let $H = K \wr \fS_m$ and let $P = Q \wr R \in \Syl_3(H)$. Then there is 
a canonical bijection
$$\Theta:\Irr_{3'}(H) \to \Irr_{3'}(\NB_H(P)).$$
In particular, $\Theta$ is $\Gal(\bar\QQ/\QQ)$-equivariant. Moreover, if a finite group $A$ acts on $K$ and 
we extend its action to $H$ by letting $A$ act trivially on $\fS_m$, then $\Theta$ is $A$-equivariant.
\end{thm}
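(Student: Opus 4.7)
The plan is to parametrize both $\Irr_{3'}(H)$ and $\Irr_{3'}(\NB_H(P))$ via Clifford theory over their respective ``base'' subgroups, and then to match the two parametrizations piece-by-piece using hypothesis $(*)$ together with the canonical extensions of Lemma \ref{wr-ext}.

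On the global side, since $Q \lhd K$ is an abelian Sylow $3$-subgroup, every $\theta \in \Irr(K)$ has degree dividing $|K/Q|$, so $\Irr(K) = \Irr_{3'}(K)$. A Clifford-theoretic analysis of $K^m \lhd H$, entirely analogous to the parametrizations used in Sections \ref{q-1} and \ref{q+1}, shows that each $\chi \in \Irr_{3'}(H)$ can be written uniquely in the form
$$\chi = \bigl(\widetilde{\theta_1}\,\chi^{\lam_1} \otimes \cdots \otimes \widetilde{\theta_r}\,\chi^{\lam_r}\bigr)\uparrow_{I_\rho}^H,$$
where $\theta_1,\ldots,\theta_r \in \Irr(K)$ are pairwise distinct, $\rho = (n_1,\ldots,n_r) \in \Lambda_{3'}(m)$, each $\lam_i \vt n_i$, $I_\rho = \prod_i (K \wr \fS_{n_i})$, and $\widetilde{\theta_i}$ is the canonical extension of $\theta_i^{\otimes n_i}$ from Lemma \ref{wr-ext}. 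Lemma \ref{norm} then gives $\NB_H(P) = B \rtimes \NB_{\fS_m}(R)$, where $B$ is the subgroup of $K^m$ consisting of $m$-tuples that are constant modulo $Q$ on each $R$-orbit; a parallel Clifford analysis over $B \lhd \NB_H(P)$ will yield an analogous parametrization of $\Irr_{3'}(\NB_H(P))$, with the factors $\chi^{\lam_i}$ replaced by irreducible $3'$-characters of $\NB_{\fS_{n_i}}(R_{n_i})$.

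To construct $\Theta$, I would first (using Sylow conjugacy, without loss of generality) arrange $R = R_{n_1} \times \cdots \times R_{n_r} \leq \fS_\rho$ with each $R_{n_i} \in \Syl_3(\fS_{n_i})$ decomposed via the $3$-adic expansion of $n_i$ as in hypothesis $(*)$. This produces the compatible decomposition
$$\NB_H(P) \cap I_\rho \;=\; \prod_{i=1}^r \NB_{K \wr \fS_{n_i}}(Q \wr R_{n_i}),$$
and by Lemma \ref{wr-ext} the restriction $\bar\theta_i$ of $\widetilde{\theta_i}$ to the $i$-th factor remains a canonical extension of the relevant base character. Setting $\lam_i^\sharp := \Phi_{n_i}(\chi^{\lam_i})$ as provided by $(*)$, I define
$$\Theta(\chi) := \bigl( (\bar\theta_1 \cdot \lam_1^\sharp) \otimes \cdots \otimes (\bar\theta_r \cdot \lam_r^\sharp) \bigr)\uparrow^{\NB_H(P)}_{\NB_H(P) \cap I_\rho}.$$
Irreducibility and the $3'$-property of $\Theta(\chi)$ would then be checked via Proposition \ref{isa_lemma} applied to the pair $(I_\rho, \NB_H(P))$ inside $H$, using the Frattini-style identity $H = I_\rho \cdot \NB_H(P)$ (which holds because $P \leq I_\rho$ and any two Sylow $3$-subgroups of $H$ lying inside $I_\rho$ are $I_\rho$-conjugate). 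The inverse of $\Theta$ is built symmetrically from $\Phi_{n_i}^{-1}$, giving the bijection. Galois- and $A$-equivariance are then formal consequences: $(*)$ supplies $\Gal(\bar\QQ/\QQ)$-equivariance of each $\Phi_{n_i}$, Lemma \ref{wr-ext} supplies Galois- and $A$-equivariance of the canonical extensions, and by hypothesis $A$ acts trivially on every symmetric group factor.

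The principal obstacle I anticipate is the careful Clifford-theoretic bookkeeping on the local side: showing that $\Irr_{3'}(\NB_H(P))$ is really indexed by the same tuples $(\rho,\{\theta_i\},\{\lam_i^\sharp\})$ as the global side, with the local extensions intrinsically identified as the canonical ones of Lemma \ref{wr-ext}. The subgroup $B$ lies strictly between $Q^m$ and $K^m$ and its irreducible characters must be resolved through successive Clifford correspondences relative to $Q^m \lhd B \lhd \NB_H(P)$; once this bookkeeping is in place, the construction of $\Theta$ and the verification of all its properties proceed mechanically.
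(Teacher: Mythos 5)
Your proposal diverges substantially from the paper's argument, and two of its load-bearing steps fail.

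\textbf{The Frattini identity is false.} You propose to deduce irreducibility of $\Theta(\chi)$ from Proposition \ref{isa_lemma} using the claimed equality $H = I_\rho \cdot \NB_H(P)$. But $I_\rho$ is not normal in $H$, and the Frattini argument only gives $H = L\cdot\NB_H(P)$ when $L \lhd H$. The identity genuinely fails: take $K=1$, $m=4$ (so $H=\fS_4$), $\rho=(3,1)\in\Lambda_{3'}(4)$ and $I_\rho = \fS_3$. Then $P$ has order $3$, $\NB_H(P)=\fS_3$, and $I_\rho\cdot\NB_H(P)=\fS_3 \neq \fS_4$. The correct statement (that any two Sylows of $H$ \emph{contained in $I_\rho$} are $I_\rho$-conjugate) only controls the $\NB_H(P)$-orbit on a subset of Sylows, not on all of them, so this route to irreducibility collapses.

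\textbf{$\bar\theta_i$ is not a canonical extension.} You assert that the restriction $\bar\theta_i$ of $\widetilde{\theta_i}$ to $\NB_{K\wr\fS_{n_i}}(Q\wr R_{n_i})$ ``remains a canonical extension of the relevant base character.'' Writing $B_{n_i} = \NB_{K\wr\fS_{n_i}}(Q\wr R_{n_i})\cap K^{n_i}$ for the subgroup of tuples constant modulo $Q$ on each $R_{n_i}$-orbit, the base character in question is $\theta_i^{\otimes n_i}|_{B_{n_i}}$. If $\theta_i(1)>1$ and some $R_{n_i}$-orbit has size $3^j>1$, then on a factor $B_{n_i}\twoheadrightarrow (K/Q)$-diagonal this restriction becomes essentially the $3^j$-th tensor power of $\theta_i$ as a representation of $K/Q$, which is reducible in general. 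So $\bar\theta_i$ is reducible, your product $\bar\theta_i\cdot\lam_i^\sharp$ is not an irreducible character, and the definition of $\Theta$ does not produce a map into $\Irr_{3'}(\NB_H(P))$.

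\textbf{What the paper does instead.} The crucial idea you are missing is to first observe that every $\chi\in\Irr_{3'}(N)$ (with $N=\NB_H(P)$) has $[Q^m,P]$ in its kernel, so $\Irr_{3'}(N)$ is canonically identified with $\Irr_{3'}(M)$ for $M:=N/[Q^m,P]$. The quotient $M$ is computed explicitly as a direct product $M_0\times\cdots\times M_t$ indexed by the $3$-adic digits $j$ of $m$, with $M_j\cong (K\times\NB_{\fS_{3^j}}(R_{3^j}))\wr\fS_{a_j}$; the key isomorphism here is $(N\cap K^m)/[Q^m,P]\cong K^{\sum_j a_j}$ via $(x_1y,\ldots,x_my;1)\mapsto x_1\cdots x_m y$, which collapses the problematic subdirect product $B$ to a genuinely small base. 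The bijection is then built layer-by-layer by breaking each $\lam_i$ into $3$-adic pieces $\lam_{i,j}$ via hypothesis $(*)$ and reindexing the data by $j$ rather than by the parts of $\rho$. This avoids both of your obstacles: no Frattini identity is needed because everything reduces to matching irreducible characters of the direct factors $M_j$, and no restriction of the $\widetilde{\theta_i}$ to an awkward subgroup is ever taken, since in $M_j$ the base is a genuine power $K^{a_j}$ and Lemma \ref{wr-ext} applies directly.
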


\begin{proof}
(a) First we fix a total order on $\Irr_{3'}(K)$.
For any $\sm \in \Irr_{3'}(K)$, the character $\sm^{\otimes m} \in \Irr(K^m)$ has a canonical extension 
$\tilde\sm$ to $H$ defined by Lemma \ref{wr-ext}. Next, any $3'$-partition $\lam \vdash m$ yields
a canonical character $\tilde\sm \cdot \chi^\lam$ of $3'$-degree of 
$K^m \wr \fS_m$, where $\chi^\lam \in \Irr(\fS_m)$ corresponds to $\lam$.

Now it is easy to see that every $\chi \in \Irr_{3'}(H)$ is uniquely labeled by 
\begin{equation}\label{chi1}
  \chi = \chi(\rho,\usm,\ulam),
\end{equation}  
where $\rho = (k_1 \geq k_2 \geq \ldots \geq k_r \geq 1) \vdash m$, $\usm = (\sm_1, \ldots,\sm_r)$,
$\sm_i \in \Irr_{3'}(K)$ are pairwise distinct, $\ulam = (\lam_1, \ldots,\lam_r)$, $\lam_i$ is a $3'$-partition of $k_i$,
$3 \nmid (m!/\prod^r_{i=1}k_i!)$, and if $k_i = k_{i'}$ for $i < i'$ then $\sm_i > \sm_{i'}$. Indeed, given such 
a $(\rho,\usm,\ulam)$, we can consider the character 
$$\phi := \sm_1^{\otimes k_1} \otimes \sm_2^{\otimes k_2} \otimes \ldots \otimes \sm_r^{\otimes k_r}$$
of $K^m$, which has the inertia subgroup $K^m \rtimes Y$ in $H$, where
$$Y = \fS_{k_1}  \times \fS_{k_2} \times \ldots \times \fS_{k_r}.$$
By Lemma \ref{wr-ext}, $\sm_i^{\otimes k_i}$ has a canonical 
(equivariant under $A$ and $\Gamma$) extension $\tilde\sm_i$ to $K^{k_i} \wr \fS_{k_i}$, and
then we get the irreducible character 
$$\tilde\phi := (\tilde\sm_1 \cdot \chi^{\lam_i}) \otimes \ldots \otimes (\tilde\sm_r \cdot \chi^{\lam_r})$$
of $K^m \rtimes Y$. Inducing $\tilde\phi$ to $H$, we obtain $\chi$.

The condition 
$3 \nmid (m!/\prod^r_{i=1}k_i!)$ is equivalent to
\begin{equation}\label{chi2}
  k_i = \sum^t_{j=0}b_{i,j}3^j,~1 \leq i \leq r,~~\sum^r_{i=1}b_{i,j} = a_j,~0 \leq j \leq t.
\end{equation}  
We can further refine the $i^{\mathrm {th}}$ component $\lam_i$ of the parameter $\ulam$ of $\chi$ as follows. 
Recall $\lam_i \vt k_i = \sum^t_{j=0}k_{i,j}$,
where $k_{i,j} := b_{i,j}3^j$. Choosing $R_{k_{i,j}} = R_{3^j}^{b_{i,j}}$, we have 
$R_{k_i} = R_{k_{i,0}} \times R_{k_{i,1}} \times  \ldots \times R_{k_{i,t}} \in \Syl_3(\fS_{k_i})$, and 
$$\NB_{\fS_{k_i}}(R_{k_i}) = \NB_{\fS_{k_{i,0}}}(R_{k_{i,0}}) \times  \NB_{\fS_{k_{i,1}}}(R_{k_{i,1}}) \times \ldots \times 
    \NB_{\fS_{k_{i,0}}}(R_{k_{i,0}}).$$
Hence, by (*) (which follows from Theorem \ref{thm:maintotal}), we can find a unique $\lam_{i,j} \vt k_{i,j}$ for each $j$ such that
$$(\lam_i)^\sharp = (\lam_{i,0})^\sharp \otimes (\lam_{i,1})^\sharp \otimes \ldots \otimes (\lam_{i,t})^\sharp.$$  
Thus each $\lam_i$ with $1 \leq i \leq r$ is uniquely determined by the tuple $(\lam_{i,0},\lam_{i,1}, \ldots,\lam_{i,t})$.  

\smallskip
(b) For each $i$, we fix $R_{3^i} \in \Syl_3(\fS_{3^i})$. Then we can take $R_{m_i} \in \Syl_3(\fS_{m_i})$ to be 
$R_{3^i}^{a_i}$, and then take $R$ to be $R_{m_0} \times \ldots \times R_{m_t}$.
Note that $N:=\NB_H(P)$ normalizes $Q^m$ and the subgroup
$[Q^m,P] = [Q^m,R]$ of $[P,P]$. Modding out by this subgroup and using the explicit structure of $N$ given 
in Lemma \ref{norm}, 
we can identify $\Irr_{3'}(N)$ with $\Irr_{3'}(M)$, where 
\begin{equation}\label{m1}
  M:= N/[Q^m,P] \cong M_0 \times M_1 \times \ldots \times M_t.
\end{equation}  
Here, 
\begin{equation}\label{m2}
  M_i \cong K^{a_i} \rtimes \NB_{\fS_{m_i}}(R_{m_i}) \cong (K \times \NB_{\fS_{3^i}}(R_{3^i})) \wr \fS_{a_i},
\end{equation}
and we use the convention $M_i = 1$ if $a_i=0$.   
(Indeed, if $m = 3^t$ for instance, then by Lemma \ref{norm}, $h = (h_1, \ldots,h_m;z)$ belongs to $N \cap K^m$ if and only if
$z=1$, $h_i = x_iy$ for all $1 \leq i \leq m$, $x_i \in Q$ and $y \in T$, where we have expressed $K = Q \rtimes T$ for a fixed
$3'$-subgroup $T$ using the Schur-Zassenhaus theorem.  Now the map 
$$(x_1y,x_2y, \ldots,x_my;1) \mapsto x_1x_2\ldots x_my$$
yields an isomorphism $(N \cap K^m)/[Q^m,P] \cong K$. Furthermore, the complement $\NB_{\fS_m}(R_m)$ 
in $N$ centralizes $(N \cap K^m)/[Q^m,P]$. The general case then follows by a straightforward consideration.)

Now, using \eqref{m1}, we can represent each $\theta \in \Irr_{3'}(N) = \Irr_{3'}(M)$ uniquely as 
\begin{equation}\label{theta1}
  \theta = \theta_0 \otimes \theta_1 \otimes \ldots \otimes \theta_t,
\end{equation}  
where $\theta_i \in \Irr_{3'}(M_i)$.

\smallskip
(c) Next we parametrize $\Irr_{3'}(M_i)$. First we consider the case $a_i = 1$. Then by \eqref{m2} we can identify $M_i$ with 
$K \times \NB_{\fS_{3^i}}(R_{3^i})$. Hence, by Theorem A, 
each $\theta_i \in \Irr_{3'}(M_i)$ is uniquely written as $\tau_i \otimes \mu_i^\sharp$, where 
$\tau_i \in \Irr_{3'}(K)$ and $\mu_i \vt 3^i$ (equivalently, $\mu_i \in \HC(3^i)$, so that $\Phi_{3^i}(\chi^{\mu_i}) = \mu_i^\sharp$). Thus 
when $a_i=1$, there is a canonical bijection between $\Irr_{3'}(M_i)$  and $\Irr_{3'}(K) \times \Irr_{3'}(\fS_{3^i})$. 

\smallskip
(d) Next assume that $a_i = 2$. Using \eqref{m2} we identify $M_i$ with $(K \times X) \wr \fS_2$, where
$X := \NB_{\fS_{3^i}}(R_{3^i})$.  Consider any $\theta_i \in \Irr_{3'}(M_i)$. 

\smallskip
(d1) First assume that an irreducible constituent of 
the restriction of $\theta_i$ to $K \times K \lhd M_i$ is different on restriction to the two direct factors. 
In this case, $\theta_i$ is induced from the character
$$\tau^1_i \otimes (\nu^1_i)^\sharp \otimes \tau^2_i \otimes (\nu^2_i)^\sharp$$
of $K \times X \times K \times X$, where $\tau^{1,2}_i \in \Irr_{3'}(K)$, $\tau^1_i > \tau^2_i$, and $\nu^{1,2}_i \in \HC(3^i)$. 
Thus $\theta_i$ is uniquely determined by a $2$-set $\{\tau^1_i,\tau^2_i\}$ and an {\it ordered} pair $(\nu^1_i,\nu^2_i)$.

\smallskip
(d2) In the remaining case we have $\tau_i \otimes \tau_i$ as an irreducible constituent of $(\theta_i)|_{K \times K}$ for 
some $\tau_i \in \Irr_{3'}(K)$. Then we have two possibilities. In the former, 
$\theta_i$ is induced from the character
$$\tau_i \otimes \beta^1_i \otimes \tau_i \otimes \beta^2_i$$
of $H \times X \times H \times X$, where $\beta^{1,2}_i \in \Irr_{3'}(X)$ are distinct. Correspondingly, there is a unique 
$\nu_i \in \Irr_{3'}(\fS_{m_i})$ such that $\nu_i^\sharp \in \Irr_{3'}(\NB_{\fS_{m_i}}(R_{m_i}))$ is induced from the character 
$\beta^1_i \otimes \beta^2_i$
of $X \times X \lhd \NB_{\fS_{m_i}}(R_{m_i}) = X \wr \fS_2$. In the latter, $\theta_i$ extends the character 
$$\tau_i \otimes \beta_i \otimes \tau_i \otimes \beta_i$$
of $K \times X \times K \times X$, where $\beta_i \in \Irr_{3'}(X)$. By Lemma \ref{wr-ext}, such an extension is
uniquely determined by the sign of the trace of the involution $(1,2) \in \fS_2$. 
Correspondingly, there are two characters $\nu_i^\pm$ in $\Irr_{3'}(\fS_{m_i})$ such that 
$(\nu_i)^\sharp \in \Irr_{3'}(\NB_{\fS_{m_i}}(R_{m_i}))$ extend the character 
$\beta_i \otimes \beta_i$
of $X \times X \lhd \NB_{\fS_{m_i}}(R_{m_i}) = X \wr \fS_2$, and they are distinguished by the sign of the trace of the involution 
$(1,2) \in \fS_2$. Hence there is a unique $\nu_i \vt m_i$ such that $\nu_i^\sharp$ agrees with $\theta_i$ on 
the sign of the trace of $(12)$. 
Thus, in either one of the two possibilities, $\theta_i$ is uniquely determined by $\tau_i \in \Irr_{3'}(K)$ and $\nu_i \vt m_i$.

We have shown that, in the case $a_i=2$, there is a canonical bijection between $\Irr_{3'}(M_i)$ and the (disjoint) union of 
$\Irr_{3'}(K) \times \Irr_{3'}(\fS_{m_i})$ and $\Irr_{3'}(K)^{\{2\}} \times \Irr_{3'}(\fS_{3^i})^{2}$. (Here,
for any finite set $\Omega$, we let $\Omega^{\{2\}}$ denote the set of all $2$-subsets of $\Omega$, and
$\Omega^2$ denote the Cartesian product $\Omega \times \Omega$.)

\smallskip
(e) Consider any $\chi = \chi(\rho,\usm,\ulam) \in \Irr_{3'}(H)$ as in \eqref{chi1}.
Recall that $\rho = (k_1 \geq k_2 \geq \ldots \geq k_r) \vdash m$ satisfies \eqref{chi2}. Consider any $0 \leq j \leq t$ with $a_j > 0$
(note that $M_j = 1$ if $a_j=0$). Then there is some 
$k_i = \sum^t_{j=0}k_{i,j}$ with $1 \leq i \leq r$ and $b_{i,j} > 0$. 
Suppose that $b_{i,j} = a_j$. Then we choose $\theta_j \in \Irr_{3'}(M_j)$ labeled by 
$\sigma_i \in \Irr_{3'}(K)$ and $\lam_{i,j} \vt k_{i,j} = m_j$.

Suppose now that $b_{i,j} \neq a_j$. Then \eqref{chi2} implies that $(b_{i,j},a_j) = (1,2)$, and that 
there is a unique $i' \neq i$ such that $b_{i',j} = 1$. We may assume for definiteness that $i < i'$, whence $\sigma_i > \sigma_{i'}$
by our construction of $(\rho,\usm,\ulam)$. Then we choose $\theta_j \in \Irr_{3'}(M_j)$ labeled by the $2$-set
$\{\sigma_i,\sigma_{i'}\} \in \Irr_{3'}(K)^{\{2\}}$ and the ordered pair $(\lam_{i,j},\lam_{i',j})$ of $3^j$-hooks as in (d1).

Thus we have assigned to $\chi$ a canonical $\theta_j \in \Irr_{3'}(M_j)$ for each $0 \leq j \leq t$. It is straightforward to check 
that the map $\chi \mapsto \chi^\sharp$, with $\chi^\sharp := \theta_0 \otimes \theta_1 \otimes \ldots \otimes \theta_t \in \Irr_{3'}(N)$
as in \eqref{theta1}, is a canonical bijection. 

To conclude let $\Gamma$ denote either the absolute Galois group $\Gal(\bar\QQ/\QQ)$ or the group of automorphisms $A$ described in the statement of the Theorem. It is easy to observe that for $g\in\Gamma$ and $\chi=\chi(\rho,\usm,\ulam)\in\mathrm{Irr}_{3'}(H)$, we have that $\chi^g=\chi(\rho,\usm^g,\ulam)$, where $\usm^g=(\sigma_1^g,\ldots, \sigma_r^g)$. This follows from direct computations, using Lemma \ref{wr-ext} and the fact that the characters of the symmetric groups are rational-valued, and so $\Gamma$-invariant\color{black}. 
Moreover, if $\theta\in\mathrm{Irr}_{3'}(M_i)$ for some $i$ such that $a_i=1$, then $\theta=\theta(\sigma, \mu)$ is labelled by $\sigma, \mu\in\mathrm{Irr}_{3'}(K)\times \mathrm{Irr}_{3'}(\fS_{3^{i}})$. It is again not difficult to see that $\theta^g=\theta(\sigma^g,\mu)$, for all $g\in\Gamma$, 
using that the characters in $\irrq{\norm{\fS_{3^i}}{P_{3^i}}}{3}$ are rational-valued and thus $\Phi_{3^i}$ is trivially
$\Gamma$-equivariant.
A similar observation shows that also for $a_i=2$ we have that $\Gamma$ acts non-trivially only on the $\mathrm{Irr}_{3'}(K)$-part of the label of an irreducible $3'$-degree character of $M_i$. 
From this we deduce that the canonical bijection $\Theta$ is both $\Gal(\bar\QQ/\QQ)$-equivariant and $A$-equivariant. 
\end{proof}

\section{Proofs of Theorem C and Corollary D}

Let $n=2m+c$, for some $c\in\{0,1\}$. Let $P$ be a Sylow $3$-subgroup of $G_n^\eps(q)$, chosen (as in Theorem \ref{odd-even})
such that $\NB_{G^\eps_{2m+1}(q)}(P) = G^\eps_1(q) \times \NB_{G^\eps_{2m}(q)}(P).$
Let $H^\eps(q)$ be the subgroup of $G_{2m}^\eps(q)$ defined in Sections \ref{q-1} and \ref{q+1} by 
\begin{eqnarray*}
H^\eps(q)=
\begin{cases}
G^\eps_{1}(q)\wr \fS_n,\  \ &\text{ if } 3\ \text{divides}\ q-\eps\,,\\

(\GL_1(q^2)\rtimes_{\eps} C_2)\wr \fS_m,\  \  &\text{ if } 3\ \text{divides}\ q+\eps\,.\\
\end{cases}
\end{eqnarray*}

It follows that $\NB_{G^\eps_{n}(q)}(P)\leq G^\eps_c(q) \times H^\eps(q)\leq G^\eps_c(q)\times G^\eps_{2m}(q)\leq G^\eps_n(q)$ (see for instance \cite[Section 3]{FS}.
This in particular implies that $\NB_{G^\eps_{2m}(q)}(P)=\NB_{H^\eps(q)}(P)$.
Therefore Theorem \ref{local} gives a canonical bijection $\Theta$ between $\mathrm{Irr}_{3'}(H^\eps(q))$ and $\mathrm{Irr}_{3'}(\NB_{G^\eps_{2m}(q)}(P))$. 

Denote by $\Phi_{q,\eps}$ the map between 
$\mathrm{Irr}_{3'}(G^\eps_{2m}(q))$ and $\mathrm{Irr}_{3'}(H^\eps(q))$, 
described in Proposition \ref{prop q-1} (when $3$ divides $q-\eps$) or in Theorem \ref{thm q+1} (when $3$ divides $q+\eps$).
If $c=0$ then $\Theta\circ\Phi_{q,\eps}$ is a canonical bijection between $\mathrm{Irr}_{3'}(G^\eps_{n}(q))$ and $\mathrm{Irr}_{3'}(\NB_{G^\eps_{n}(q)}(P))$.

When $c=1$, let $\Psi$ be the map described in Theorem \ref{odd-even} (ii). In this case we have that $(\mathrm{id}\times \Theta)\circ (\mathrm{id}\times \Phi_{q,\eps})\circ\Psi$ is a canonical bijection between $\mathrm{Irr}_{3'}(G^\eps_{n}(q))$ and $\mathrm{Irr}_{3'}(\NB_{G^\eps_{n}(q)}(P))$.

In both cases the bijection obtained is equivariant with respect to the action of the absolute Galois group $\Gal(\bar\QQ/\QQ)$ and with respect to the action of group automorphisms because it is obtained as the composition of equivariant bijections. 

This completes the proof of Theorem C. Corollary D directly follows from Theorem C.

\end{document}